\declaretheorem[name=Theorem, numberwithin=section]{thm}
\declaretheorem[name=Proposition, sibling=thm]{prp}
\declaretheorem[name=Corollary, sibling=thm]{cor}
\declaretheorem[name=Lemma, sibling=thm]{lem}
\declaretheorem[name=Definition, sibling=thm, style=definition]{definition}
\declaretheorem[name=Remark, sibling=thm, style=remark]{remark}
\numberwithin{equation}{section}
\numberwithin{figure}{section}
\newcommand{\R}{\mathbb{R}}
\newcommand{\Z}{\mathbb{Z}}
\newcommand{\N}{\mathbb{N}}
\newcommand{\M}{\mathbb{M}}
\newcommand{\U}{\mathbb{U}}
\newcommand{\op}[1]{\operatorname{#1}}
\DeclarePairedDelimiterX\set[1]\lbrace\rbrace{\def\given{\;\delimsize\vert\;}#1}
\newcommand{\smashcap}{\sqcap}
\DeclareMathOperator{\pr}{pr}
\newcommand{\CSpine}{MediumSeaGreen}
\newcommand{\CHorn}{OrangeRed}
\newcommand{\CCyl}{RoyalBlue}
\begin{document}

\title{Universal Distances for Extended Persistence}

\author{Ulrich Bauer \and Magnus Bakke Botnan \and Benedikt Fluhr}

\maketitle

\begin{abstract}
The extended persistence diagram is an invariant of piecewise linear functions, which is known to be stable under perturbations of functions with respect to the bottleneck distance as introduced by Cohen-Steiner, Edelsbrunner, and Harer.
We address the question of universality, which asks for the largest possible stable distance on extended persistence diagrams, showing that a more discriminative variant of the bottleneck distance is universal.
Our result applies more generally to settings where persistence diagrams are considered only up to a certain degree.
We achieve our results by establishing a functorial construction and several characteristic properties of relative interlevel set homology, which mirror the classical Eilenberg--Steenrod axioms.
Finally, we contrast the bottleneck distance
with the interleaving distance of sheaves on the real line by showing that the latter is not intrinsic, let alone universal.
This particular result has the further implication
that the interleaving distance of Reeb graphs is not intrinsic either.
\end{abstract}

\section{Introduction}
The core idea of topological persistence is to construct invariants of continuous real-valued functions by considering preimages and applying homology with coefficients in a fixed field~$K$,
or any other functorial invariant from algebraic topology.
The most basic incarnation of this idea studies the homology of sublevel sets.
\emph{Sublevel set persistent homology} was introduced in \cite{MR1949898}
and is described up to isomorphism
by the \emph{sublevel set persistence diagram} \cite{MR2279866}.
An extension of this invariant
considers preimages of arbitrary intervals, where non-closed intervals are treated using relative homology
\cite{MR2352705,Carlsson:2009:ZPH:1542362.1542408}.
This is commonly referred to as the \emph{Mayer--Vietoris pyramid}, due to its connection to the relative Mayer--Vietoris sequence.

As shown by \cite{Carlsson:2009:ZPH:1542362.1542408},
interlevel set persistent homology
of a PL function $f \colon X \rightarrow \R$
on a finite connected simplicial complex,
is described up to isomorphism by the \emph{extended persistence diagram}
$\op{Dgm}(f)$
due to \cite{Cohen-Steiner2009}.
It is well-known that the operation ${f\mapsto \op{Dgm}(f)}$ is \emph{stable} \cite{Cohen-Steiner2009,MR3988214}, by an extension of the classical stability result for sublevel set persistence \cite{MR2279866} to the setting of extended persistence.
Specifically, for functions $f,g\colon X\to \R$ as above,
\[d_{B}(\op{Dgm}(f), \op{Dgm}(g)) \leq ||f-g||_\infty,\]
where $d_{B}(\op{Dgm}(f), \op{Dgm}(g))$ is the bottleneck distance
of $\op{Dgm}(f)$ and $\op{Dgm}(g)$.

In this paper we prove that this distance is \emph{universal}: it is the largest possible stable distance on extended persistence diagrams realized by functions.
As a note of caution, we point out
that universality is only achieved by a version of the bottleneck distance in which any vertex contained inside the \emph{extended subdiagram}
has to be matched, as is made precise in \cref{remark:matchings4universality}.

We obtain this universality result by exploring the structure behind extended persistence.
As observed already in \cite{Carlsson:2009:ZPH:1542362.1542408}, the connecting homomorphisms allow us to assemble the Mayer--Vietoris pyramids of all degrees into a single diagram, with the shape of an infinite strip.
We denote this strip by $\mathbb{M}$, alluding to the fact that it can naturally be interpreted as the universal cover of a Möbius strip whose points correspond to pairs of preimages of $f$ whose relative homology we want to consider, and we refer to the strip-shaped diagram as the \emph{relative interlevel set homology} of the function $f$ and denote it by $h(f) \colon \M \rightarrow \mathrm{Vect}_K$.
The resulting diagram now consists of all relative homology groups of those pairs, separated by homological degrees, and with all possible maps induced by either inclusions or connecting homomorphisms.
It is easily overlooked that establishing the commutativity of this strip-shaped diagram actually raises several technical challenges;
in the present paper, we give a rigorous construction, extending the definitions from the previous literature to also make the gluing of the Mayer--Vietoris pyramids explicit in a functorial way.

We also define the associated extended persistence diagram $\op{Dgm}(f)$
in terms of the functor $h(f)$ in \cref{sec:persDiagram}
as a multiset on the interior of $\M$.
Note that we write $\op{int} \M$ to denote this interior of $\M$ as usual in topology, and not the intervals in $\M$, which often play a role in persistence theory too.
While our definition turns out to be equivalent to the familiar one from \cite{Cohen-Steiner2009}, our approach is natural in the context of relative interlevel set persistence, clarifying the above mentioned conditions on matchings required to yield a universal bottleneck distance.
In \cref{sec:struct} we use a structure theorem for relative interlevel set homology to show that the ordinary, relative, and extended subdiagrams,
as originally defined in \cite{Cohen-Steiner2009},
can be obtained by restriction
to the corresponding regions depicted in \cref{fig:subdiagrams}.
\begin{figure}[t]
\centering
\import{strip-diagrams/_diagrams/}{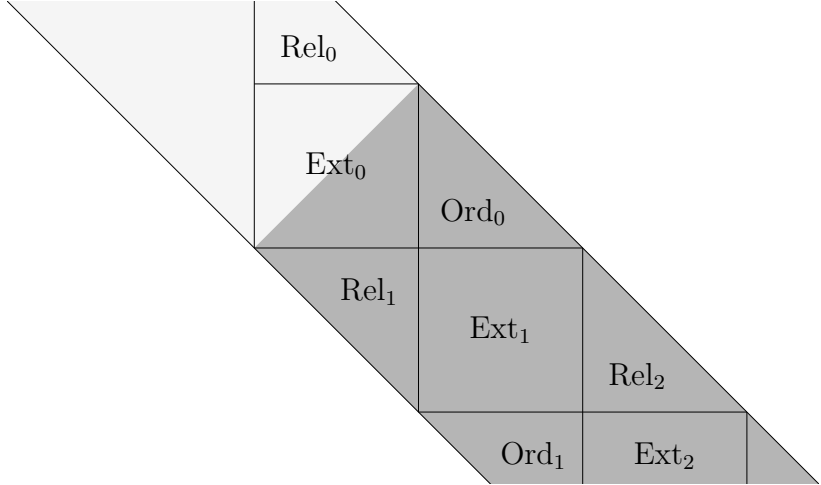}
\caption{
Regions in the strip \(\M\) corresponding to the
ordinary, relative, and extended subdiagrams \cite{Cohen-Steiner2009}.
The support %
of extended persistence diagrams
is shaded in grey.
}
\label{fig:subdiagrams}
\end{figure}

In practice it may be undesirable or even intractable
to compute the entire extended persistence diagram
of $f \colon X \rightarrow \R$.
Instead, it might be preferable
to compute just the restriction of $\op{Dgm}(f) \colon \op{int} \M \rightarrow \N_0$
to some closed upset $\U \subseteq \M$.
Now in order for the bottleneck distance
of \cref{dfn:bottleneckDist} below
to be universal,
when applied to such restricted persistence diagrams,
we need to impose some restrictions on the upset $\U \subseteq \M$ (arising from the fact
that \cref{lem:matching} below is invalid
for arbitrary closed upsets $\U \subseteq \M$).
In \cref{fig:subdiagrams}
the grey shaded region is triangulated
by the solid and the dashed lines,
with each triangle corresponding to a face of a Mayer--Viatoris pyramid.
We say that a closed upset $\U \subseteq \M$ is \emph{admissible},
if it contains the region labeled $\op{Ext}_0$
and
if it is compatible with this triangulation
in the sense that
the interior of each triangle
is either fully contained in or disjoint from $\U$.
Moreover,
for an admissible upset $\U \subseteq \M$
we say that a multiset $\mu \colon \op{int} \U \rightarrow \N_0$
is a \emph{realizable persistence diagram}
if $\mu = \op{Dgm}(f) |_{\op{int} \U}$ for some function $f$ as above.
Roughly speaking, we may think of the restriction of
$\op{Dgm}(f) \colon \op{int} \M \rightarrow \N_0$
to the interior of an admissible upset $\U \subset \M$
as the graded barcode truncated at some upper bound for the degree
with respect to a certain kind of persistence.
Using the terminology introduced by
\cite{Carlsson:2009:ZPH:1542362.1542408},
the five kinds of persistence that may occur
for an admissible upset $\U \subset \M$
are levelset (zigzag), up-down, down-up, extended persistence,
and extended persistence of $-f$.
For extended persistence,
$\U$ is the union of all regions corresponding to subdiagrams
not exceeding a certain degree
as depicted in \cref{fig:subdiagrams}.
Universality now follows immediately from the following slightly stronger theorem.
\begin{restatable}{thm}{thmSuffCond}
  \label{thm:suffCond}
  \sloppy
  For an admissible upset $\U$
  and any two realizable persistence diagrams
  $\mu, \nu \colon \op{int} \U \rightarrow \N_0$
  with $d_B (\mu, \nu) < \infty$,
  there exists a finite simplicial complex $X$ and piecewise linear functions
  $f, g \colon X \rightarrow \R$ with
  \begin{equation*}
    \op{Dgm}(f) |_{\op{int} \U} = \mu,
    \quad
    \op{Dgm}(g) |_{\op{int} \U} = \nu,
    ~
    \text{and}
    \quad
    d_B (\mu, \nu)
    =
    \lVert f-g \rVert_{\infty}
    .
  \end{equation*}
\end{restatable}
Since the stability inequality $d_B (\mu, \nu) \leq \lVert f-g\rVert_{\infty}$ is attained for all $\mu, \nu$, the bottleneck distance is universal.
Note that this also immediately implies that the bottleneck distance is geodesic (and hence intrinsic), with an explicit geodesic between $\mu, \nu$ being given by the 1-parameter family of extended persistence diagrams of the convex combinations $(1-\lambda)f + \lambda g$ of $f$ and $g$;
see also \cref{cor:bottleneckGeodesic}.

An analogous realizability theorem in the context of sublevel set persistent homology has already been given in \cite{Lesnick2015}, and special cases have been considered in several other works \cite{DeSha-2021,MR4130976}.
Universality of metrics has further been studied for various other common constructions in topological data analysis, such as Reeb graphs \cite{MR4323622}, contour trees and merge trees \cite{MR4470893,MR4470903}, and the general setting of locally persistent categories \cite{scoccola-2020}.

We stress that the persistence diagrams considered here 
contain
information about homology in \emph{any degree} represented in $\U$.
It is worth noting that the realization of a \emph{pair} of extended persistence diagrams is more intricate than the realization of a single one.
For example, if $\U$ is the region corresponding to the $0$th homology of level sets (containing the triangles labeled $\mathrm{Ord}_0$, $\mathrm{Ext}_0$, and the adjacent triangles of $\mathrm{Ext}_0$ and $\mathrm{Ext}_1$),
it is possible to realize any single admissible persistence diagram as a $1$-dimensional complex (a \emph{Reeb graph}), for \emph{pairs} of admissible persistence diagrams this is not always possible in the presence of loops that are left unmatched.
Our construction yields a $2$-dimensional complex in this case.

In \cite{2021arXiv210809298B} we provide another construction
of the extended persistence diagram,
which is closely related to the construction we provide here
but requires fewer tameness assumptions.
More specifically, in \cite{2021arXiv210809298B} we use
(singular) cohomology in place of singular homology
and we take preimages of open subsets in place of closed subsets.
Moreover,
we show in \cref{sec:equivalenceOfDefs}
that the definition of extended persistence diagrams
we provide here is equivalent
to the original definition
and we show the same for the definition in terms of cohomology
in \cite[Section 3.2.2]{2021arXiv210809298B}.
Thus,
both constructions yield identical extended persistence diagrams.
As is implied by \cref{thm:suffCond},
it suffices to consider piecewise linear functions
on finite simplicial complexes
to prove universality for realizable persistence diagrams.
For this reason, singular homology and preimages of closed subsets
are sufficient in the present context.
Moreover, considering preimages of closed subsets,
the translation from singular homology to simplicial homology
is very straightforward.
We use this connection to provide several properties
in \cref{sec:props},
which are essential to the soundness of our computations.
We leave it to future work to generalize the results from \cref{sec:props}
to the weaker tameness assumptions in \cite{2021arXiv210809298B}.

A notable feature of the bottleneck distance is
that it can be defined generically over an admissible upset $\U \subseteq \M$
in a straightforward way
as in \cref{dfn:bottleneckDist}
while at the same time being universal.
In \cref{sec:contrastInterleavingDist}
we contrast this to interleaving distances of sheaves.
By \cite{Carlsson:2009:ZPH:1542362.1542408,Bendich-2013}
and as summarized in \cite[Section 3.2.1]{2021arXiv210809298B}
the extended persistence diagram
is equivalent to the \emph{graded level set barcode}.
Moreover, \cite{MR4355732} endow graded level set barcodes
with a bottleneck distance
and it is easy to see
that the correspondence
we describe in \cite[Section 3.2.1]{2021arXiv210809298B}
yields an isometry
between extended persistence diagrams
and graded level set barcodes,
each of the two sets endowed with their corresponding bottleneck distance.
Furthermore,
by the \mbox{\cite[Isometry Theorem 5.10]{MR4355732}}
the interleaving distance of \emph{derived level set persistence}
by \cite{MR3259939,MR3873181}
and the bottleneck distance
of corresponding graded level set barcodes are identical.
In particular,
the interleaving distance of derived level set persistence
is universal as a result of \cref{thm:suffCond}.
However,
\cref{cor:factorIntrinsic} implies that the interleaving distance of sheaves,
which can be seen as a counterpart
to derived level set persistence in degree $0$,
is not intrinsic,
let alone universal.
As a further consequence of \cref{cor:factorIntrinsic},
the interleaving distance of Reeb graphs is not intrinsic either,
which answers a question raised in \cite[Section 4]{MR3685697}.

\section{Preliminaries}
\label{sec:prelim}

In this section, we formalize the requisite notions of relative interlevel set homology and persistence diagrams, building on and extending several ideas that appear in the relevant literature, and aiming for an explicit description of those ideas.
In particular, we will assemble all relevant persistent homology in one single functor, which will turn out to be a helpful and elucidating perspective for studying the persistent homology associated to a function.

\subsection{Relative Interlevel Set Homology}
\label{sec:InterlevelHom}

For a piecewise linear function $f \colon X \rightarrow \R$,
the inverse image map $f^{-1}$ provides an order-preserving map
from the poset of compact intervals to $2^{X}$.
The image of this map consists of all \emph{interlevel sets} of $f$,
where an interlevel set is a preimage $f^{-1} (I)$ of a closed interval $I$.
Post-composing this map with homology
we obtain a functor from the poset of compact intervals
to the category of graded vector spaces over $K$:
\begin{equation}
  \label{eq:interlevelsetHomology}
  I \mapsto H_* (f^{-1} (I))
  .
\end{equation}
This invariant is commonly referred to as
\emph{interlevel set homology}.
As proposed by \cite{Carlsson:2009:ZPH:1542362.1542408}
for the discretely indexed setting,
we consider the following extension of this invariant:
\begin{enumerate}
\item
  For interlevel sets of the form $f^{-1} (I \cup J)$ with $I,J$ overlapping intervals,
  there exist connecting maps
  $H_d (f^{-1} (I \cup J)) \to H_{d-1} (f^{-1} (I \cap J))$
  from a Mayer--Vietoris sequence,
  which we want to include in our structure.
\item
  In addition to absolute homology groups we would
  like to include relative homology groups as well.
  More specifically, we include
  all homology groups
  of preimages of
  pairs of closed subspaces of
  $\overline{\R} := [-\infty, \infty]$ whose
  difference is an interval contained in $\R$.
  By excision, each such relative homology group can be written as
  $H_n (f^{-1} (I, C))$, where $I$ is a closed interval
  and $C$ is the complement of an open interval.
\end{enumerate}
Extending the interlevel set homology functor
\eqref{eq:interlevelsetHomology}
in the first direction leads to the
notion of a Mayer--Vietoris system as defined in
\cite[Definition 2.14]{2019arXiv190709759B}.
On the other hand, the construction by \cite{Bendich-2013}
gives an extension in the second direction.
The \emph{relative interlevel set homology}
combines both extensions
to obtain a continuously indexed version of the construction by
\cite{Carlsson:2009:ZPH:1542362.1542408}.
Specifically, we encode all information as a functor $h(f)$
on one large poset $\M$
(with Mayer--Vietoris systems arising as restrictions to
a subposet of $\M$).
Any point $u \in \M$ corresponds to a pair
$(I, C)$ as above
and a degree $n$ in such a way that $h(f)(u) = H_n (f^{-1} (I, C))$.
The natural symmetry of this parametrization is expressed by
an automorphism $T \colon \M \rightarrow \M$
such that given $h(f)(u) = H_n (f^{-1} (I, C))$ for some
$u \in \M$ we have $(h(f) \circ T)(u) = H_{n-1} (f^{-1} (I, C))$.
In other words, if $u \in \M$ corresponds to a pair $(I, C)$
and a degree $n$, then $T(u)$ corresponds to the pair
$(I, C)$ and the degree $n-1$.

\subsubsection{A Parametrization for Graded Relative Interlevel Sets}
\label{sec:parametrization}
Explicitly, $\M$ is given as the convex hull of two lines
$l_0$ and $l_1$ of slope $-1$ in $\R^{\circ} \times \R$
passing through $-\pi$ respectively $\pi$ on the $x$-axis,
as shown in \cref{fig:embeddingReals},
where $\R$ and $\R^{\circ}$ denote the posets given by
the orders $\leq$ and $\geq$ on $\R$, respectively.
This makes $\R^{\circ} \times \R$ the product poset
and $\M$ a subposet.

\begin{figure}[t]
  \centering
\begingroup%
  \makeatletter%
  \providecommand\color[2][]{%
    \errmessage{(Inkscape) Color is used for the text in Inkscape, but the package 'color.sty' is not loaded}%
    \renewcommand\color[2][]{}%
  }%
  \providecommand\transparent[1]{%
    \errmessage{(Inkscape) Transparency is used (non-zero) for the text in Inkscape, but the package 'transparent.sty' is not loaded}%
    \renewcommand\transparent[1]{}%
  }%
  \providecommand\rotatebox[2]{#2}%
  \newcommand*\fsize{\dimexpr\f@size pt\relax}%
  \newcommand*\lineheight[1]{\fontsize{\fsize}{#1\fsize}\selectfont}%
  \ifx\svgwidth\undefined%
    \setlength{\unitlength}{297.50001526bp}%
    \ifx\svgscale\undefined%
      \relax%
    \else%
      \setlength{\unitlength}{\unitlength * \real{\svgscale}}%
    \fi%
  \else%
    \setlength{\unitlength}{\svgwidth}%
  \fi%
  \global\let\svgwidth\undefined%
  \global\let\svgscale\undefined%
  \makeatother%
  \begin{picture}(1,0.42857141)%
    \lineheight{1}%
    \setlength\tabcolsep{0pt}%
    \put(0,0){\includegraphics[width=\unitlength,page=1]{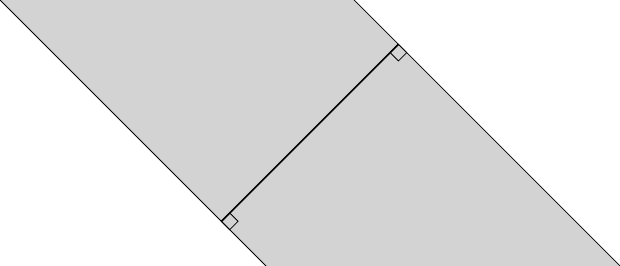}}%
    \put(0.40952368,0.27619057){\makebox(0,0)[t]{\lineheight{1.25}\smash{\begin{tabular}[t]{c}$\M$\end{tabular}}}}%
    \put(0.78571433,0.23809511){\makebox(0,0)[t]{\lineheight{1.25}\smash{\begin{tabular}[t]{c}$ \pi $\end{tabular}}}}%
    \put(0.2142857,0.1761905){\makebox(0,0)[t]{\lineheight{1.25}\smash{\begin{tabular}[t]{c}$-\pi~~$\end{tabular}}}}%
    \put(0,0){\includegraphics[width=\unitlength,page=2]{embedding-reals.pdf}}%
    \put(0.54761897,0.1761905){\makebox(0,0)[t]{\lineheight{1.25}\smash{\begin{tabular}[t]{c}$\op{Im} \blacktriangle$\end{tabular}}}}%
    \put(0.72924828,0.30884696){\makebox(0,0)[t]{\lineheight{1.25}\smash{\begin{tabular}[t]{c}$l_1$\end{tabular}}}}%
    \put(0.27945856,0.10149378){\makebox(0,0)[t]{\lineheight{1.25}\smash{\begin{tabular}[t]{c}$l_0$\end{tabular}}}}%
  \end{picture}%
\endgroup%

  \caption{
    The strip $\M$ and
    the image of the embedding
    $\blacktriangle \colon \overline{\R} \rightarrow \M$.
  }
  \label{fig:embeddingReals}
\end{figure}

The automorphism $T \colon \M \rightarrow \M$
has the following defining property
(also see \cref{fig:incidenceT}):
\begin{quote}
  Let $u \in \M$,
  $h_0$ be the horizontal line through $u$,
  let $g_0$ be the vertical line through $u$,
  let $h_1$ be the horizontal line through $T(u)$, and let
  $g_1$ be the vertical line through $T(u)$.
  Then the lines $l_0$, $h_0$, and $g_1$ intersect in a common point, and
  the same is true for the lines $l_1$, $g_0$, and $h_1$.  
\end{quote}
We also note that $T$ is a glide reflection along
the bisecting line between $l_0$ and $l_1$,
and the amount of translation is the distance
of $l_0$ and $l_1$.
Moreover, as a space, $\M / \langle T \rangle$ is a Möbius strip;
see also \cite{Carlsson:2009:ZPH:1542362.1542408}.

\begin{figure}[t]
  \centering
\begingroup%
  \makeatletter%
  \providecommand\color[2][]{%
    \errmessage{(Inkscape) Color is used for the text in Inkscape, but the package 'color.sty' is not loaded}%
    \renewcommand\color[2][]{}%
  }%
  \providecommand\transparent[1]{%
    \errmessage{(Inkscape) Transparency is used (non-zero) for the text in Inkscape, but the package 'transparent.sty' is not loaded}%
    \renewcommand\transparent[1]{}%
  }%
  \providecommand\rotatebox[2]{#2}%
  \newcommand*\fsize{\dimexpr\f@size pt\relax}%
  \newcommand*\lineheight[1]{\fontsize{\fsize}{#1\fsize}\selectfont}%
  \ifx\svgwidth\undefined%
    \setlength{\unitlength}{297.50001526bp}%
    \ifx\svgscale\undefined%
      \relax%
    \else%
      \setlength{\unitlength}{\unitlength * \real{\svgscale}}%
    \fi%
  \else%
    \setlength{\unitlength}{\svgwidth}%
  \fi%
  \global\let\svgwidth\undefined%
  \global\let\svgscale\undefined%
  \makeatother%
  \begin{picture}(1,0.42857141)%
    \lineheight{1}%
    \setlength\tabcolsep{0pt}%
    \put(0,0){\includegraphics[width=\unitlength,page=1]{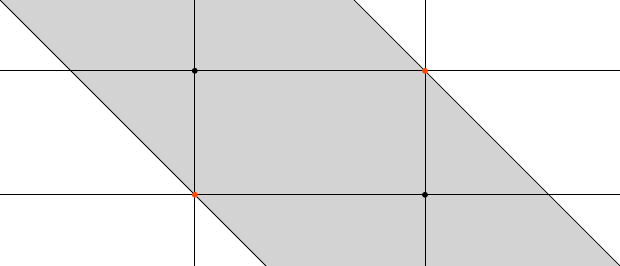}}%
    \put(0.34047628,0.23333343){\makebox(0,0)[t]{\lineheight{1.25}\smash{\begin{tabular}[t]{c}$g_1$\end{tabular}}}}%
    \put(0.47142855,0.32857133){\makebox(0,0)[t]{\lineheight{1.25}\smash{\begin{tabular}[t]{c}$h_1$\end{tabular}}}}%
    \put(0.66190484,0.18571436){\makebox(0,0)[t]{\lineheight{1.25}\smash{\begin{tabular}[t]{c}$g_0$\end{tabular}}}}%
    \put(0.51904762,0.08095235){\makebox(0,0)[t]{\lineheight{1.25}\smash{\begin{tabular}[t]{c}$h_0$\end{tabular}}}}%
    \put(0.27142864,0.32857133){\makebox(0,0)[t]{\lineheight{1.25}\smash{\begin{tabular}[t]{c}$T(u)$\end{tabular}}}}%
    \put(0.70714282,0.08333344){\makebox(0,0)[t]{\lineheight{1.25}\smash{\begin{tabular}[t]{c}$u$\end{tabular}}}}%
    \put(0.8235292,0.21456604){\makebox(0,0)[t]{\lineheight{1.25}\smash{\begin{tabular}[t]{c}$l_1$\end{tabular}}}}%
    \put(0.18517764,0.1957747){\makebox(0,0)[t]{\lineheight{1.25}\smash{\begin{tabular}[t]{c}$l_0$\end{tabular}}}}%
  \end{picture}%
\endgroup%

  \caption{Incidences defining $T$.}
  \label{fig:incidenceT}
\end{figure}

Now in order to specify a degree for each point in $\M$,
it suffices to specify all points corresponding to degree $0$.
More specifically, we will now specify a fundamental domain~$D$
with respect to the action of $\langle T \rangle$,
which consists of all points corresponding to degree~$0$.
To this end,
we embed the extended reals $\overline{\R}$ into the strip $\M$
by precomposing the diagonal map $\Delta \colon {\R} \to {\R}^2,
t \mapsto (t,t)$
with the homeomorphism
$\arctan: \overline{\R} \to [-\pi/2,\pi/2]$,
yielding a map
\[{\blacktriangle = \Delta \circ \arctan \colon \overline{\R} \rightarrow \M, ~
t \mapsto (\arctan t, \arctan t)}\]
such that $\op{Im} \blacktriangle$ is a perpendicular
line segment through the origin joining $l_0$ and $l_1$,
see \cref{fig:embeddingReals}.
With this we define our fundamental domain as
\[
  D := \,
  (\uparrow \op{Im} \blacktriangle) \setminus
  T (\uparrow \op{Im} \blacktriangle),
\]
see \cref{fig:defFundamentalDomain}.
Here $\uparrow \op{Im} \blacktriangle$ is the upset
of the image of $\blacktriangle$.

\begin{figure}[t]
  \centering
\begingroup%
  \makeatletter%
  \providecommand\color[2][]{%
    \errmessage{(Inkscape) Color is used for the text in Inkscape, but the package 'color.sty' is not loaded}%
    \renewcommand\color[2][]{}%
  }%
  \providecommand\transparent[1]{%
    \errmessage{(Inkscape) Transparency is used (non-zero) for the text in Inkscape, but the package 'transparent.sty' is not loaded}%
    \renewcommand\transparent[1]{}%
  }%
  \providecommand\rotatebox[2]{#2}%
  \newcommand*\fsize{\dimexpr\f@size pt\relax}%
  \newcommand*\lineheight[1]{\fontsize{\fsize}{#1\fsize}\selectfont}%
  \ifx\svgwidth\undefined%
    \setlength{\unitlength}{240.83333588bp}%
    \ifx\svgscale\undefined%
      \relax%
    \else%
      \setlength{\unitlength}{\unitlength * \real{\svgscale}}%
    \fi%
  \else%
    \setlength{\unitlength}{\svgwidth}%
  \fi%
  \global\let\svgwidth\undefined%
  \global\let\svgscale\undefined%
  \makeatother%
  \begin{picture}(1,0.52941176)%
    \lineheight{1}%
    \setlength\tabcolsep{0pt}%
    \put(0,0){\includegraphics[width=\unitlength,page=1]{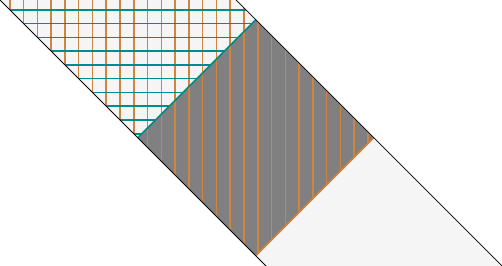}}%
    \put(0.87484017,0.16280688){\makebox(0,0)[t]{\lineheight{1.25}\smash{\begin{tabular}[t]{c}$l_1$\end{tabular}}}}%
    \put(0.36426394,0.11808903){\makebox(0,0)[t]{\lineheight{1.25}\smash{\begin{tabular}[t]{c}$l_0$\end{tabular}}}}%
  \end{picture}%
\endgroup%

  \caption{
    The fundametal domain
    $\textcolor{DimGrey}{D} :=
    \textcolor{Peru}{\uparrow \op{Im} \blacktriangle} \setminus
    \textcolor{DarkCyan}{T(\uparrow \op{Im} \blacktriangle)}$.
  }
  \label{fig:defFundamentalDomain}
\end{figure}

Now $D$ provides a tessellation of $\M$
as shown in \cref{fig:tessellation},
and we assign the degree $n$ to any point in $T^{-n} (D)$.
This convention for $T$ is chosen in analogy to the topological suspension, which also decreases the homological degree: a homology class of degree $d$ in the suspension
corresponds to a homology class of degree $d-1$
in the original space.

\begin{figure}[t]
  \centering
\begingroup%
  \makeatletter%
  \providecommand\color[2][]{%
    \errmessage{(Inkscape) Color is used for the text in Inkscape, but the package 'color.sty' is not loaded}%
    \renewcommand\color[2][]{}%
  }%
  \providecommand\transparent[1]{%
    \errmessage{(Inkscape) Transparency is used (non-zero) for the text in Inkscape, but the package 'transparent.sty' is not loaded}%
    \renewcommand\transparent[1]{}%
  }%
  \providecommand\rotatebox[2]{#2}%
  \newcommand*\fsize{\dimexpr\f@size pt\relax}%
  \newcommand*\lineheight[1]{\fontsize{\fsize}{#1\fsize}\selectfont}%
  \ifx\svgwidth\undefined%
    \setlength{\unitlength}{191.25bp}%
    \ifx\svgscale\undefined%
      \relax%
    \else%
      \setlength{\unitlength}{\unitlength * \real{\svgscale}}%
    \fi%
  \else%
    \setlength{\unitlength}{\svgwidth}%
  \fi%
  \global\let\svgwidth\undefined%
  \global\let\svgscale\undefined%
  \makeatother%
  \begin{picture}(1,0.66666667)%
    \lineheight{1}%
    \setlength\tabcolsep{0pt}%
    \put(0,0){\includegraphics[width=\unitlength,page=1]{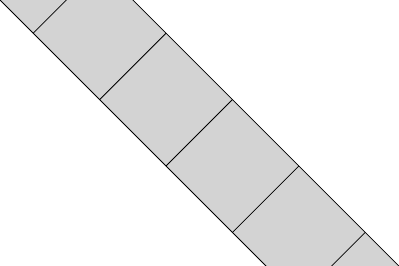}}%
    \put(0.75,0.08333333){\makebox(0,0)[t]{\lineheight{1.25}\smash{\begin{tabular}[t]{c}$\Sigma^{-2}(D)$\end{tabular}}}}%
    \put(0.58333333,0.25){\makebox(0,0)[t]{\lineheight{1.25}\smash{\begin{tabular}[t]{c}$\Sigma^{-1}(D)$\end{tabular}}}}%
    \put(0.25,0.58333333){\makebox(0,0)[t]{\lineheight{1.25}\smash{\begin{tabular}[t]{c}$\Sigma(D)$\end{tabular}}}}%
    \put(0.41666667,0.41666667){\makebox(0,0)[t]{\lineheight{1.25}\smash{\begin{tabular}[t]{c}$D$\end{tabular}}}}%
    \put(0.68190079,0.35809922){\makebox(0,0)[t]{\lineheight{1.25}\smash{\begin{tabular}[t]{c}$l_1$\end{tabular}}}}%
    \put(0.30524236,0.31142431){\makebox(0,0)[t]{\lineheight{1.25}\smash{\begin{tabular}[t]{c}$l_0$\end{tabular}}}}%
  \end{picture}%
\endgroup%

  \caption{The tessellation of $\M$ induced by $T$ and $D$.}
  \label{fig:tessellation}
\end{figure}

It remains to assign a pair $(I, C)$
to any point $u \in D$ of the fundamental domain.
The following proposition provides such an assignment;
a schematic image is shown in \cref{fig:rho}.

\begin{prp}
  \label{prp:rho}
  Let $\mathcal{P}$ denote the set of pairs of closed subspaces
  of $\overline{\R}$.
  Then there is a unique order-preserving map
  \begin{equation*}
    \rho \colon D \rightarrow \mathcal{P}
  \end{equation*}
  with the following three properties:
  \begin{enumerate}
  \item
    For any $t \in \R$ we have
    $(\rho \circ \blacktriangle)(t) = (\{t\}, \emptyset)$.
  \item
    For any $u \in D \cap \partial \M$
    the two components of $\rho(u)$ are identical.
  \item
    For any axis-aligned rectangle contained in $D$
    the corresponding joins and meets are preserved by $\rho$.
  \end{enumerate}
\end{prp}

\begin{figure}[t]
  \centering
  \import{strip-diagrams/_diagrams/}{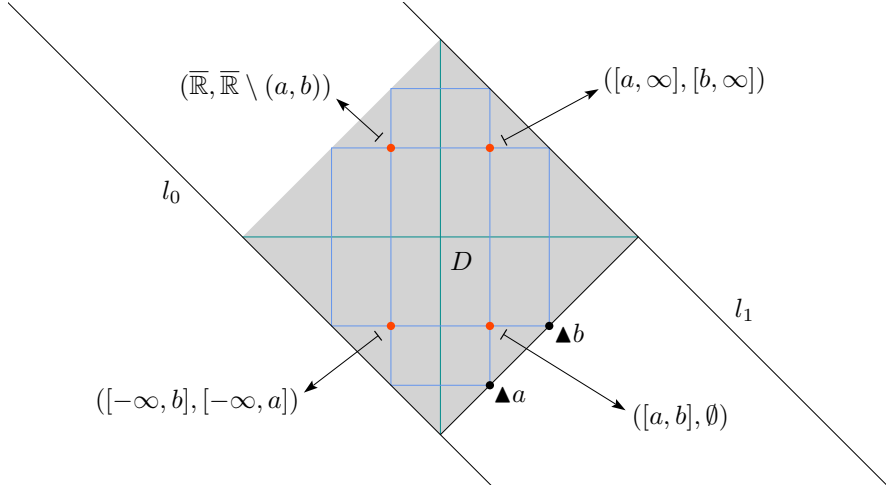}
  \caption{A schematic image of $\rho$.}
  \label{fig:rho}
\end{figure}

For $u \in D$ we may describe $(I, C) := \rho(u)$ more explicitly as follows.
The interval $I$ is given by taking the downset
of $u$ in the poset $\M$,
denoted by $\downarrow u$,
and taking the preimage under the embedding
$\blacktriangle \colon \overline{\R} \rightarrow \M$.
Similarly, $C$ is given by starting with the transformed point $T^{-1} (u)$,
taking its upset, forming the complement of this upset in $\M$,
and taking the preimage of the closure under $\blacktriangle$.
Thus, we have the formula
\begin{equation*}
  \rho(u) =
  \blacktriangle^{-1} \big(
  \downarrow u, \, \overline{\M \setminus \uparrow T^{-1} (u)} \,
  \big)
  .
\end{equation*}
We also note that, if $I$ is a bounded interval, then $C$ must be empty; if $I$ is a proper downset (upset), then so is $C$; and if $\overline{\R} \setminus C \subseteq \R$, then $I$ must be $\overline{\R}$.
Any other point in the orbit of the point $u$ is assigned the same pair,
but a different degree.

\subsubsection{Assembling the Relative Interlevel Set Homology Functor}
Now suppose $f \colon X \rightarrow \R$ is a piecewise linear function
with $X$ a finite simplicial complex.
For any $n \in \Z$, we obtain a functor
\begin{equation}
  \label{eq:nthLayer}
  D \rightarrow \mathrm{vect}_K,
  ~
  u \mapsto H_n (f^{-1}(\rho(u)))
  .
\end{equation}
Here $\mathrm{vect}_K$ denotes the category of finite-dimensional
vector spaces over $K$.
This functor describes the $n$-th layer
of the Mayer--Vietoris pyramid of $f$, as introduced in
\cite{Carlsson:2009:ZPH:1542362.1542408} and extensively studied in
\cite{Bendich-2013} and \cite{MR3924175}.
As pointed out in \cite{Carlsson:2009:ZPH:1542362.1542408},
the different layers can be glued
together to form one large diagram.
More specifically, we can move the functor in \eqref{eq:nthLayer}
down by $n$ tiles in the tessellation of $\M$ shown in
\cref{fig:tessellation} by precomposition with
$T^{n}$:
\begin{equation}
  \label{eq:movedLayer}
  T^{-n}(D) \rightarrow \mathrm{vect}_K,
  ~
  u \mapsto H_n (f^{-1}(\rho( T^{n}(u)))
  .
\end{equation}
This way we obtain a single functor on each tile $T^{-n} (D)$.
We can further extend these functors into one single functor
\begin{equation*}
  h(f) := h (f; K) \colon \M \rightarrow \mathrm{vect}_K
  .
\end{equation*}
We will refer to this functor as the
\emph{relative interlevel set homology of $f$
  with coefficients in $K$}.
For better readability we suppress the field $K$ as an argument of $h$.
We define the restriction $h (f) |_{T^{-n} (D)}$
as the functor specified in \eqref{eq:movedLayer}.

It remains to specify
the linear maps in $h (f)$ between comparable elements
from different tiles.
In the following we will define these maps 
as either the zero map or as the boundary operator
of a Mayer--Vietoris sequence.
To this end, let $u \preceq v \in \M$, with $u$ and $v$ lying in different tiles, and consider the interval $[u, v]$ in the poset $\M \subseteq \R^{\circ} \times \R$, which is the intersection of a closed axis-aligned rectangle with the closed strip $\M$.
If there is a point $x \in [u, v] \cap \partial \M$,
then $h (f)(x) \cong \{0\}$,
since $\rho$ assigns to any point in $D \cap \partial \M$
a pair of identical intervals.
In this case, we thus have to set
$h (f)(u \preceq v) = 0$,
as this map factors through $\{0\}$.

Now consider the case $[u, v] \cap \partial \M = \emptyset$,
and let $w := T(u)$.
As illustrated by \cref{fig:squareRects},
we have $v \preceq w$,
and thus $u$ and $v$ have to lie in adjacent tiles.
For simplicity, we first consider the case $v \in D$,
which implies $u \in T^{-1} (D)$ and $w \in D$.
In particular, the poset interval $[v, w]$ is an axis aligned rectangle with corners $v=(v_1, v_2)$, $w=(w_1, w_2)$, $(w_1, v_2)$, and $(v_1, w_2)$.
We have the meet ${v=(w_1, v_2)\wedge (v_1, w_2)}$
and the join ${w = (w_1, v_2) \vee (v_1, w_2)}$.
Since this rectangle is contained in $D$, join and meet 
are preserved by $\rho$ by \cref{prp:rho}.3.
Moreover, since taking preimages
$f^{-1} \colon 2^{\overline{\R}} \rightarrow 2^X$
is a homomorphism of Boolean algebras,
$f^{-1}$ also preserves joins and meets, which in this case are the componentwise unions and intersections.
Writing $F = f^{-1} \circ \rho$,
we get a Mayer--Vietoris sequence
\begin{equation*}
\dots
\to
\underbrace
{H_1 (F(w))}_
{h (f)(u)}
\stackrel{\partial}\to 
\underbrace
{H_0 (F(v))}
_
{h (f)(v)}
\to
H_0 (F(w_1, v_2))
\oplus
H_0 (F(v_1, w_2))
\to
H_0 (F(w))
\to
0
\end{equation*}
and define
$h (f)(u \preceq v) := \partial .$

\begin{figure}[t]
  \centering
  \import{lucid-graphics/_graphics/}{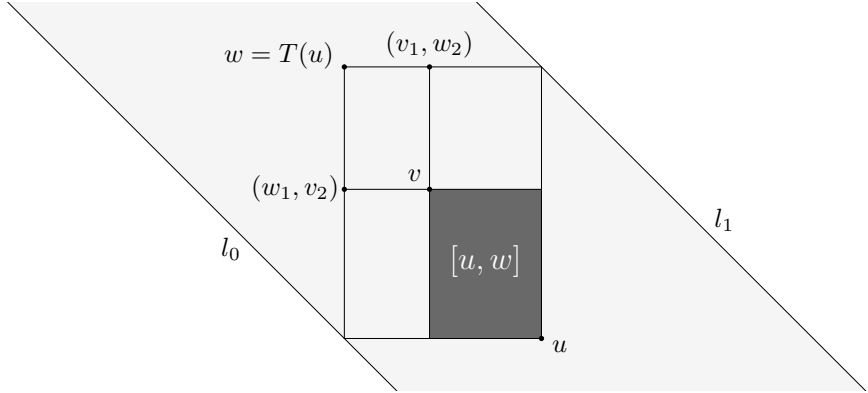}
  \caption{The interval $[u, v]$ and $w = T(u)$.}
  \label{fig:squareRects}
\end{figure}

We note that the existence of the
Mayer--Vietoris sequence above is ensured by
a very general criterion, given in
\cite[Theorem 10.7.7]{MR2456045}.
This criterion requires certain triads to be excisive, which is satisfied here because
$f$ is piecewise linear.
In any subsequent applications of
the Mayer--Vietoris sequence,
we will continue to use this criterion, omitting the straightforward proof that the corresponding triads are excisive.

In the general situation where $v \in T^{-n} (D)$ for some $n \in \Z$,
the points ${v' := T^{n}(v)}$ and $w' := T^{n+1}(u)$
lie in $D$.
Using the above arguments with
$v'$ and $w'$
in place of $v$ respectively $w$,
we obtain the Mayer--Vietoris sequence
\begin{equation*}
\dots
\to
\underbrace
{H_{n+1} (F(w'))}_
{h (f)(u)}
\stackrel{\tilde\partial}\to 
\underbrace
{H_n(F(v'))}
_
{h (f)(v)}
\to
H_n (F(w'_1, v'_2))
\oplus
H_n (F(v'_1, w'_2))
\to
H_n (F(w'))
\to
\dots
\end{equation*}
and define
\(h (f)(u \preceq v) := \tilde\partial .\)

We refer to $h(f)$ as the
\emph{relative interlevel set homology of $f$
  with coefficients in $K$}.
A generalization of the construction
of $h(f)$ to pairs $(X,A)$ and a proof of its functoriality can be found in
\cref{sec:constr_pers}.

\subsection{The Extended Persistence Diagram}
\label{sec:persDiagram}

Having defined the relative interlevel set homology as a functor
\(h(f) \colon \M \rightarrow \mathrm{vect}_K\),
we now formalize the notion of an \emph{extended persistence diagram},
originally due to \cite{Cohen-Steiner2009},
as an invariant of functors
$F \colon \M \rightarrow \mathrm{vect}_K$
vanishing on $\partial \M$.
The \emph{persistence diagram} of $F$ is a multiset
$\op{Dgm} (F) = \mu \colon \op{int} \M \rightarrow \N_0$, which counts,
for each point $v = (v_1, v_2) \in \M$, the maximal number $\mu(v)$
of linearly independent vectors in $F(v)$ born at $v$;
for the functor $h(f)$, these are homology classes.
More precisely, we define
\begin{equation*}
  \op{Dgm} (F) : \op{int} \M \rightarrow \N_0,
  \,
  v \mapsto
  \dim_K F(v) -
  \dim_K \sum_{u \prec v} \op{Im} F(u \preceq v)
  .
\end{equation*}
In the last term, $u$ ranges over all $u \in \M$ with $u \prec v$.
Moreover, note that
\begin{equation*}
  \sum_{u \prec v} \op{Im} F(u \preceq v)
  =
  \left(\bigcup_{x > v_1} \op{Im} F((x, v_2) \preceq v)\right) +
  \left(\bigcup_{y < v_2} \op{Im} F((v_1, y) \preceq v)\right).
\end{equation*}

Now let $f \colon X \rightarrow \R$ be a piecewise linear function
with $X$ a finite simplicial complex.

\begin{definition}[Extended Persistence Diagram]
  \label{dfn:diagramFunction}
  The \emph{extended persistence diagram of $f$ (over $K$)} is
  $\op{Dgm}(f) := \op{Dgm}(f; K) := \op{Dgm}(h(f)).$
\end{definition}

\sloppy
See \cref{fig:booklet} for an example.
In \cref{sec:struct} we show that the restriction
of $\op{Dgm}(f) \colon \op{int} \M \rightarrow \N_0$
to any of the regions shown in \cref{fig:subdiagrams}
yields the corresponding ordinary, relative, or extended subdiagram
as defined in \cite{Cohen-Steiner2009}
up to reparametrization.
Moreover, we note that $\op{Dgm}(f)$ is supported in the downset
${\downarrow \op{Im} \blacktriangle \subseteq \M}$;
in \cref{fig:subdiagrams} this region is shaded in dark gray.
As $f$ is bounded, $\op{Dgm}(f)$ is also supported in the union of open squares
$\left(-\frac{\pi}{2}, \frac{\pi}{2}\right)^2 + \pi \Z^2$.

\begin{figure}[t]
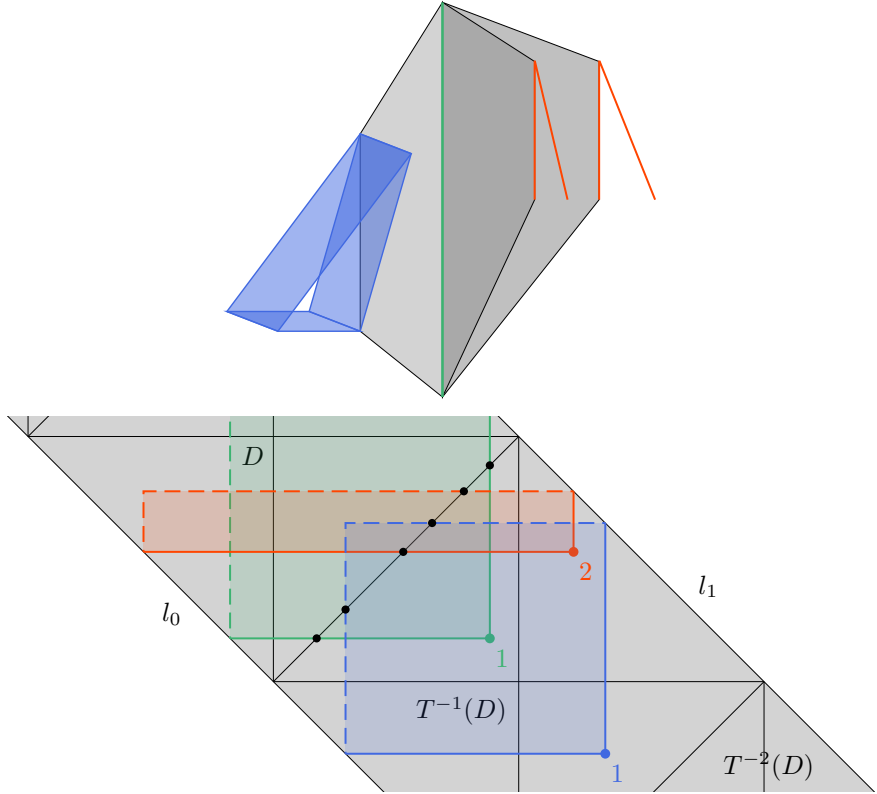

  \centering
  \import{strip-diagrams/_diagrams/}{booklet.pdf_tex}
  \import{strip-diagrams/_diagrams/}{booklet-diagram.pdf_tex}
  \caption{
    A simplicial complex (top) and the extended persistence diagram
    (bottom)
    of the associated height function.
    The numbers next to the dots indicate their multiplicities
    in the persistence diagram.
    The complex shown on top is an instance of the construction
    we use to realize extended persistence diagrams
    in the proof of \cref{thm:suffCond}.
    Subcomplexes shaded in color correspond to dots in the persistence diagram.
    The rectangular regions show the support
    of the corresponding indecomposables in
    relative interlevel set homology, generically spanning two consecutive degrees in homology.
  }
  \label{fig:booklet}
\end{figure}

We now define the \emph{bottleneck distance} of extended persistence diagrams.
To this end, we first define an extended metric
$d \colon \M \times \M \rightarrow [0, \infty]$ on~$\M$.
Following the approach of \cite{MR4099800},
we then define an extended metric on multisets of $\op{int} \M$ in terms of~$d$.
Let $d_0 \colon \R \times \R \rightarrow [0, \infty]$
be the unique extended metric on $\R$ such that for all $s < t$
we have the equation
\begin{equation*}
  d_0 (s, t) =
  \begin{cases}
    \tan t - \tan s &
    [s, t] \cap \left(\frac{\pi}{2} + \pi \Z\right) = \emptyset,
    \\
    \infty & \text{otherwise}.
  \end{cases}
\end{equation*}

\begin{definition}
  \label{dfn:d}
  Writing $u = (u_1, u_2)$ and $v = (v_1, v_2)$
  for points $u, v \in \M$ we define
  \begin{equation*}
    d \colon \M \times \M \rightarrow [0, \infty], \,
    (u, v) \mapsto \max \{d_0(u_1, v_1) , d_0(u_2, v_2)\}
  \end{equation*}
  to be the extended metric on $\M$
  given by the maximum 
  of $d_0$ on each copy of $\R$.
\end{definition}

Using the extended metric
$d \colon \M \times \M \rightarrow [0, \infty]$
we can now express how a perturbation of a function
$f \colon X \rightarrow \R$ as above
may affect its persistence diagram.
If we think of the vertices of $\op{Dgm}(f)$
as \enquote{features} of $f \colon X \rightarrow \R$,
then a $\delta$-perturbation of $f \colon X \rightarrow \R$
may cause the corresponding vertices of the persistence diagram
to be moved by up to a distance of $\delta$ away from their original position
with respect to $d \colon \M \times \M \rightarrow [0, \infty]$.
As the persistence diagram
$\op{Dgm}(f) \colon \op{int} \M \rightarrow \N_0$
is undefined at the boundary $\partial \M$,
vertices that are $\delta$-close to the boundary $\partial \M$
may disappear altogether and moreover,
new vertices within a distance of $\delta$ from $\partial \M$ may appear
in the persistence diagram of the perturbation.
This intuition is made precise
by the \cite[Stability Theorem]{Cohen-Steiner2009}
for the bottleneck distance of extended persistence diagrams.
Completely analogously,
when $\U \subseteq \M$ is an admissible upset of $\M$
and when we merely compute $\op{Dgm}(f) |_{\op{int} \U}$,
then perturbations may cause vertices to disappear in $\partial \U$
and other vertices to appear in close proximity to $\partial \U$.
We follow \cite{MR4099800} to provide a combinatorial description
of the bottleneck distance.

\begin{definition}[Matchings relative to the boundary and the Bottleneck Distance]
  \label{dfn:bottleneckDist}
  \sloppy
  Let $\U \subseteq \M$ be an admissible upset
  and let $\mu, \nu \colon \op{int} \U \rightarrow \N_0$
  be finite multisets.
  A \emph{matching (relative to the boundary) between $\mu$ and $\nu$} is
  a multiset \enquote{of pairs}
  ${M \colon \U \times \U \rightarrow \N_0}$
  such that
  \begin{equation*}
    \op{pr}_1 (M) |_{\op{int} \U} = \mu, \quad
    \op{pr}_2 (M) |_{\op{int} \U} = \nu, ~
    \text{and} \quad
    M |_{
      \partial \U \times \partial \U
    } \equiv 0
    ,
  \end{equation*}
  where $\op{pr}_1 (M), \op{pr}_2 (M) \colon \U \rightarrow \N_0$
  are the projections of ${M \colon \U \times \U \rightarrow \N_0}$
  to the respective components, 
  $\op{pr}_1 (M) \colon u \mapsto \sum_{v \in \U} M(u, v)$
  and
  $\op{pr}_2 (M) \colon v \mapsto \sum_{u \in \U} M(u, v)$.
  In this paper, we tacitly assume that all matchings are relative to the boundary.
  The \emph{norm of a matching $M$} is defined as
  \begin{equation*}
    \lVert M \rVert := \sup d \left(M^{-1} (\N \setminus \{0\})\right)
  \end{equation*}
  and the
  \emph{bottleneck distance of $\mu$ and $\nu$} is
  \begin{equation*}
    d_B (\mu, \nu) :=
    \inf
    \{
    \lVert M \rVert
    \mid
    \text{$M$ is a matching between $\mu$ and $\nu$}
    \}
    .
  \end{equation*}
\end{definition}

\begin{remark}
\label{remark:matchings4universality}
We point out an important difference of this definition of bottleneck distance to the one commonly found in the literature on extended persistence.
Consider the regions in \cref{fig:subdiagrams}
in conjunction with the extended metric
${d \colon \M \times \M \rightarrow [0, \infty]}$
defined in \cref{dfn:d} above.

Assume that we either consider the unrestricted persistence diagram (with admissible upset $\U = \M$), or restrict to a region $\U$ such that the regions corresponding to any extended subdiagram is either completely contained in $\U$ or disjoint from $\U$.
While all interior points of regions
corresponding to ordinary and relative subdiagrams
have finite distance to the boundary $\partial \M$,
all of the interior points of regions
corresponding to extended subdiagrams have distance $\infty$ to $\partial \U$.
As a result, any matching of finite norm between extended persistence diagrams
has to match any vertex contained in the extended subdiagram
to a vertex of the other extended persistence diagram.
In contrast, most of the literature concerned with extended or level set persistence \cite{Cohen-Steiner2009,Carlsson:2009:ZPH:1542362.1542408,MR3924175} either explicitly admits points in the extended subdiagram to be matched to the diagonal, or is somewhat ambiguous about this aspect.
These phantom matchings do not affect stability,
as the resulting bottleneck distance is only getting smaller.
However, not allowing these phantom matchings is crucial
for the universality result \cref{thm:suffCond} to hold.

On the other hand, if the admissible upset $\U$ contains only part of some extended subdiagram, and hence the diagonal of the extended subdiagram is contained in $\partial \U$, our definition does allow points in this extended subdiagram to be matched to the diagonal, and this is even necessary to obtain stability.
It is notable that we obtain universality in all scenarios with a single general theorem.

\end{remark}

\section{Properties of Relative Interlevel Set Homology}
\label{sec:props}

We think of
the relative interlevel set
homology of a piecewise linear function $f \colon X \rightarrow \R$
(with $X$ a finite complex)
as an analogue
to the homology of a space.
There are various tools to reduce the computation of the homology of a space
to smaller subcomputations,
and one of them is the relative homology of a pair of spaces.
In order to compute the relative interlevel set
homology of a function, we develop a counterpart to
relative homology in \cref{sec:constr_pers}.
More specifically,
given a finite simplicial pair $(X, A)$ and a piecewise linear function
$f \colon X \rightarrow \R$
we construct a functor
\[h(X, A; f) := h(X, A; f; K) \colon \M \rightarrow \mathrm{vect}_K,\]
which we refer to as the
\emph{relative interlevel set homology of the function
  ${f \colon A \subseteq X \to \R}$}.
Similarly, we write
\({\op{Dgm}(X, A; f) := \op{Dgm}(h(X, A; f))}\)
for the corresponding persistence diagram.
In the case ${A = \emptyset}$, we also write ${h(X; f) := h(X, \emptyset; f)}$ and \({\op{Dgm}(X; f) := \op{Dgm}(X, \emptyset; f)}\).
When the pair $(X, A)$ is clear from the context, we may suppress it
as an argument of $h$ or $\op{Dgm}$ and simply write $h(f)$
and $\op{Dgm}(f)$.

One of the most useful properties of relative homology
is functoriality.
Before we continue, we explain in what sense
$h = h(-; K)$ is a functor.
To this end, we describe the category of functions $f \colon A \subseteq X \to \R$ that provides the natural domain for this functor.
Denoting this category by $\mathcal{F}_0$,
its objects are triples $(X, A; f)$
with $(X, A)$ a finite simplicial pair and
$f \colon X \rightarrow \R$ a piecewise linear function.
Now let $(X, A; f)$ and $(Y, B; g)$ be objects of $\mathcal{F}_0$.
A \emph{morphism $\varphi$ from $(X, A; f)$ to $(Y, B; g)$}
is a continuous map $\varphi \colon X \rightarrow Y$
with $\varphi(A) \subseteq B$ and the property that the diagram
\begin{equation}
  \label{eq:commHom}
  \begin{tikzcd}
    X
    \arrow[rr, bend left, "\varphi"]
    \arrow[dr, "f"']
    &
    &
    Y
    \arrow[dl, "g"]
    \\
    &
    \R
  \end{tikzcd}
\end{equation}
commutes.
We may also write this as
$\varphi \colon (X, A; f) \rightarrow (Y, B; g)$.
The subscript $0$ in the notation $\mathcal{F}_0$ indicates the equivalent condition that $f$ and $g \circ \phi$ have distance $0$ in the supremum norm.
The composition and the identities of $\mathcal{F}_0$
are defined in the obvious way.
Moreover, as homology is a functor on topological spaces
and $h = h(-; K)$ is defined in terms of homology
(see \cref{sec:constr_pers}),
this makes $h$ a functor from $\mathcal{F}_0$
to the category \(\mathrm{vect}_K^{\M}\)
of functors from $\M$ to $\mathrm{vect}_K$.
As it turns out, this relative interlevel set homology functor
$h \colon \mathcal{F}_0 \rightarrow \mathrm{vect}_K^{\M}$
satisfies certain characteristic properties analogous to the Eilenberg--Steenrod axioms
for homology theories of topological spaces.

\sloppy
For the first property, let $(X, A; f)$ and $(Y, B; g)$ be objects of $\mathcal{F}_0$,
and let 
$\varphi, \psi \colon (X, A; f) \rightarrow (Y, B; g)$
be morphisms.
Then $(X \times [0, 1], A \times [0, 1]; f \circ \pr_1)$
is an object of $\mathcal{F}_0$, and
we define a \emph{fiberwise homotopy $\eta$ from $\varphi$ to $\psi$}
to be a morphism
\[\eta \colon (X \times [0, 1], A \times [0, 1]; f \circ \pr_1)
\rightarrow (Y, B; g)\] such that
$\eta(-, 0) = \varphi$ and
$\eta(-, 1) = \psi$.
If such a homotopy exists, we say that $\varphi$ and $\psi$
\emph{are fiberwise homotopic}.

\begin{lem}[Homotopy Invariance]
  \label{lem:homotopyInv}
  If $\varphi$ and $\psi$ are fiberwise homotopic,
  then ${h(\varphi) \cong h(\psi)}$.
\end{lem}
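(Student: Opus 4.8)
The plan is to reduce the statement to the classical homotopy axiom for relative singular homology, applied fiberwise over $X$, exploiting the functoriality of $h$ established in Appendices~\ref{sec:auxPairs}--\ref{sec:constr_pers}. First I would introduce the two end inclusions $\iota_0, \iota_1 \colon (X, A; f) \rightarrow (X \times [0,1], A \times [0,1]; f \circ \pr_1)$ given by $x \mapsto (x, 0)$ and $x \mapsto (x, 1)$. These are morphisms in $\mathcal{F}_0$: indeed $f \circ \pr_1 \circ \iota_j = f$ and $\iota_j(A) = A \times \{j\} \subseteq A \times [0,1]$, and the defining property of a fiberwise homotopy gives $\eta \circ \iota_0 = \varphi$ and $\eta \circ \iota_1 = \psi$. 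By functoriality of $h$ it then suffices to prove $h(\iota_0) \cong h(\iota_1)$ as morphisms in $\mathrm{Vect}_K^{\M}$, for then $h(\varphi) = h(\eta) \circ h(\iota_0) \cong h(\eta) \circ h(\iota_1) = h(\psi)$.

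Next I would compare $h(\iota_0)$ and $h(\iota_1)$ componentwise; since these are natural transformations between the same pair of functors $\M \rightarrow \mathrm{Vect}_K$, it is enough to compare their values at each $u \in \M$. Fixing such a $u$ and translating by a power of $T$, I may assume $u \in D$, so $\rho(u) = (I, C)$ with $I$ a closed interval and $C$ the complement of an open interval. Unwinding the construction of $h$ from Appendices~\ref{sec:auxPairs}--\ref{sec:constr_pers}, the component $h(\iota_j)(u)$ is, up to the canonical isomorphisms built into that construction, the map induced on relative singular homology by restricting $\iota_j$ to preimages, i.e.\ by the inclusion of the pair assembled from $f^{-1}(I) \times \{j\}$ into the pair assembled from $(f \circ \pr_1)^{-1}(I) = f^{-1}(I) \times [0,1]$ (and similarly for the roles of $C$ and $A$). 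The map $x \mapsto (x, s)$, $s \in [0,1]$, interpolates continuously between $\iota_0$ and $\iota_1$, is fiberwise over $X$ (hence commutes with $f$ and so preserves every preimage of $f \circ \pr_1$), and carries $A$ into $A \times [0,1]$; thus $\iota_0$ and $\iota_1$ restrict to homotopic maps of pairs, and the homotopy axiom for relative singular homology yields $h(\iota_0)(u) = h(\iota_1)(u)$. Therefore $h(\iota_0) = h(\iota_1)$, or at least $h(\iota_0) \cong h(\iota_1)$ once one accounts for the choice of compatible triangulation on $X \times [0,1]$ (different choices giving canonically isomorphic functors), which completes the reduction.

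I do not expect a genuine obstacle: the substantive input is entirely classical and the remaining work is bookkeeping. The two points needing care are (i) that $h$ really is a functor $\mathcal{F}_0 \rightarrow \mathrm{Vect}_K^{\M}$ whose components are relative singular homology of preimage pairs and whose inter-tile maps — the zero maps and Mayer--Vietoris boundary operators of \cref{sec:InterlevelHom} — are natural with respect to morphisms of $\mathcal{F}_0$; this is exactly the content of Appendices~\ref{sec:auxPairs}--\ref{sec:constr_pers}, so the naturality of $h(\iota_j)$ across tile boundaries is automatic and need not be reproved; and (ii) the triangulation ambiguity in the cylinder object $(X \times [0,1], A \times [0,1]; f \circ \pr_1)$, which is precisely why the conclusion is phrased as an isomorphism. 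If forced to single out the trickiest step it is making the phrase ``up to the canonical isomorphisms built into the construction'' precise, i.e.\ tracking the identification of $h(X, A; f)(u)$ with the relevant relative homology group through the appendix so that the componentwise equality upgrades to an identity (or isomorphism) of natural transformations — but this is naturality bookkeeping, not a new idea.
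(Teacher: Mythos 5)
Your proposal is correct and takes essentially the same route as the paper, whose proof is just the one-line observation that $h$ is built from (relative) homology of preimage pairs and therefore inherits the homotopy axiom; your write-up merely makes the factorization through the end inclusions and the componentwise application of the homotopy axiom explicit.
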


\begin{proof}
  As $h = h(-; K)$ is defined in terms of
  homology,
  this follows from the homotopy invariance of homology.
\end{proof}

\begin{restatable}[Mayer--Vietoris Sequence]{lem}{lemMVseq}
  \label{lem:MVseq}
  Let $X$ be a finite simplicial complex with
  subcomplexes $A_0 \subseteq X_0 \subseteq X$, $A_1 \subseteq X_1 \subseteq X$, and $A \subseteq X$,
  such that
  \begin{equation*}
    X = X_0 \cup X_1
    \quad \text{and} \quad
    A = A_0 \cup A_1
    .
  \end{equation*}
  Moreover, let $f \colon X \rightarrow \R$
  be a piecewise linear map.
  Then there is an exact sequence of functors $\M \to \mathrm{vect}_K$ given by
  \begin{equation}
    \label{eq:mv_seq}
    \begin{tikzcd}
      h(X_0 \cap X_1, A_0 \cap A_1; f |_{X_0 \cap X_1})
      \arrow[d, "\small\begin{pmatrix} 1 \\ 1 \end{pmatrix}"]
      \\[2em]
      h(X_0, A_0; f |_{X_0})
      \oplus
      h(X_1, A_1; f |_{X_1})
      \arrow[d, "(1 ~ -1)"]
      \\
      h(X, A; f)
      \arrow[d, "\partial"]
      \\
      h(X_0 \cap X_1, A_0 \cap A_1; f |_{X_0 \cap X_1}) \circ T
      \arrow[d, "\small\begin{pmatrix} 1 \\ 1 \end{pmatrix}"]
      \\[2em]
      (h(X_0, A_0; f |_{X_0}) \circ T)
      \oplus
      (h(X_1, A_1; f |_{X_1}) \circ T)
      ,
    \end{tikzcd}
  \end{equation}
  where $1$ denotes a map induced by inclusion.
\end{restatable}
  We name this the
  \emph{Mayer--Vietoris sequence for %
    $(A; A_0, A_1) \subseteq (X; X_0, X_1)$
    relative to~$f$}, following the notation used in \cite[Theorem 10.7.7]{MR2456045}.
    
\begin{proof}
  Let $u \in D$ and $n \in \Z$.
  By \cref{lem:smashcap_dist}, the operation
  $- \smashcap f^{-1}(\rho(u))$ preserves
  componentwise unions and intersections.
  Thus, $(X_0 \cap X_1, A_0 \cap A_1) \smashcap f^{-1} (\rho(u))$
  is the componentwise intersection
  of $(X_0, A_1) \smashcap f^{-1} (\rho(u))$
  and $(X_1, A_1) \smashcap f^{-1} (\rho(u))$,
  while $(X, A) \smashcap f^{-1} (\rho(u))$ is their union.
  We obtain the sequence
  \begin{equation*}
    \begin{tikzcd}
      h(X_0 \cap X_1, A_0 \cap A_1; f |_{X_0 \cap X_1})\left(T^{n}(u)\right)
      \arrow[d, "\begin{pmatrix} 1 \\ 1 \end{pmatrix}"]
      \\[2em]
      h(X_0, A_0; f |_{X_0})\left(T^{n}(u)\right)
      \oplus
      h(X_1, A_1; f |_{X_1})\left(T^{n}(u)\right)
      \arrow[d, "(1 ~ -1)"]
      \\
      h(X, A; f)\left(T^{n}(u)\right)
      \arrow[d, "(-1)^n \partial_{T^n(u)}"]
      \\
      h(X_0 \cap X_1, A_0 \cap A_1; f |_{X_0 \cap X_1})\left(T^{n+1}(u)\right)
      \arrow[d, "\begin{pmatrix} 1 \\ 1 \end{pmatrix}"]
      \\[2em]
      h(X_0, A_0; f |_{X_0})\left(T^{n+1}(u)\right)
      \oplus
      h(X_1, A_1; f |_{X_1})\left(T^{n+1}(u)\right)
    \end{tikzcd}    
  \end{equation*}
  as a portion from the corresponding Mayer--Vietoris sequence.
  As we will see,
  the sign change of the boundary operator
  is needed for naturality.
  Since $D$ is a fundamental domain,
  we get \eqref{eq:mv_seq} pointwise, at each index of $\M$.
  Moreover, the maps denoted as
  \begin{equation*}
    \begin{pmatrix}
      1 & -1
    \end{pmatrix}
    \quad \text{and} \quad
    \begin{pmatrix}
      1 \\ 1
    \end{pmatrix}    
  \end{equation*}
  are natural transformations by the functoriality of
  homology and the naturality of the boundary operator
  of the Mayer--Vietoris sequence.
  
  It remains to be shown that $\partial$ is a natural transformation.
  In part, this follows from the naturality of the above Mayer--Vietoris
  sequence.
  However, recall from our construction of $h$ in \cref{sec:constr_pers}
  that some of the internal maps of
  $h(X, A; f)$ and
  ${h(X_0 \cap X_1, A_0 \cap A_1; f |_{X_0 \cap X_1})}$
  are boundary operators as well.
  Therefore, we need to check whether certain squares with all maps
  boundary operators commute.
  Specifically, given $T^n (v) \preceq T^{n+1} (u)$ for some
  $u = (u_1, u_2) \preceq v = (v_1, v_2) \in D$ and $n \in \Z$,
  we have to show that the diagram
  \begin{equation*}
    \begin{tikzcd}[row sep=35pt, column sep=68pt]
      h(X, A; f)\left(T^{n+1}(u)\right)
      \arrow[r, "(-1)^{n+1} \partial_{T^{n+1}(u)}"]
      &
      h \left(
        X_0 \cap X_1, A_0 \cap A_1; f |_{X_0 \cap X_1}
      \right)\left(T^{n+2}(u)\right)
      \\
      h(X, A; f)\left(T^{n}(v)\right)
      \arrow[r, "(-1)^n \partial_{T^{n}(v)}"']
      \arrow[u]%
      &
      h \left(
        X_0 \cap X_1, A_0 \cap A_1; f |_{X_0 \cap X_1}
      \right)\left(T^{n+1}(v)\right)
      \arrow[u]%
    \end{tikzcd}
  \end{equation*}
  commutes.
  By unraveling the definition of the relative interlevel set homology $h$
  we may rewrite this square to a more concrete form
  with all maps boundary operators of some Mayer--Vietoris sequence:
  \begin{equation}
    \label{eq:DsquareConcrete}
    \begin{tikzcd}[row sep=25pt, column sep=55pt]
      H_{n-1}
      \left((X, A) \smashcap f^{-1} (\rho(u))\right)
      \arrow[r, "(-1)^{n-1} \partial_{n-1}"]
      &
      H_{n-2}
      \left((X_0 \cap X_1, A_0 \cap A_1) \smashcap f^{-1} (\rho(u))\right)
      \\
      H_{n}
      \left((X, A) \smashcap f^{-1} (\rho(v))\right)
      \arrow[u]
      \arrow[r, "(-1)^{n} \partial_n"']
      &
      H_{n-1}
      \left((X_0 \cap X_1, A_0 \cap A_1) \smashcap f^{-1} (\rho(v))\right)
      .
      \arrow[u]
    \end{tikzcd}
  \end{equation}
  As $f$ is piecewise linear, there are subdivisions of
  $\overline{\R}$ and $X$ such that both components of
  $\rho(v_1, u_2)$ and $\rho(u_1, v_2)$ are subcomplexes of
  $\overline{\R}$ and such that
  $f \colon X \rightarrow \overline{\R}$ is a simplicial map.
  Thus, we may use the isomorphism from simplicial homology
  to singular homology to show
  that the square \eqref{eq:DsquareConcrete} commutes.
  As it turns out,
  the boundary operator for the Mayer--Vietoris sequence
  in simplicial homology,
  which is defined in terms of the zig-zag lemma,
  and the boundary operator from \cite[Theorem 10.7.7]{MR2456045},
  which is defined in terms of the long exact sequence of a triple
  and the suspension isomorphism,
  commute with the corresponding isomorphisms
  from simplicial to singular homology
  of domain and codomain.
  Thus,
  we may think of each of the arrows in \eqref{eq:DsquareConcrete}
  as boundary operators of a Mayer--Vietoris sequence
  in simplicial homology.
  Now in order to show the commutativity of \eqref{eq:DsquareConcrete},
  we consider the commutative diagram
    \begin{equation}\tiny
      \label{eq:nineComp}
      \begin{tikzcd}[row sep=25pt]
        C_{\bullet}
        \left((X, A) \smashcap f^{-1} (\rho(u))\right)
        \arrow[d, "
        \begin{pmatrix}
          1 \\ 1
        \end{pmatrix}
        "']
        &
        \begin{matrix}
          C_{\bullet}
          \left((X_0, A_0) \smashcap f^{-1} (\rho(u))\right)
          \\
          \oplus
          \\
          C_{\bullet}
          \left((X_1, A_1) \smashcap f^{-1} (\rho(u))\right)
        \end{matrix}
        \arrow[l, "(1 ~ -1)"']
        \arrow[d]
        &
        C_{\bullet}
        \left((X_0 \cap X_1, A_0 \cap A_1) \smashcap f^{-1} (\rho(u))\right)
        \arrow[l, "
        \begin{pmatrix}
          1 \\ 1
        \end{pmatrix}
        "']
        \arrow[d, "
        \begin{pmatrix}
          1 \\ 1
        \end{pmatrix}
        "]
        \\
        \begin{matrix}
          C_{\bullet}
          \left((X, A) \smashcap f^{-1} (\rho(v_1, u_2))\right)
          \\
          \oplus
          \\
          C_{\bullet}
          \left((X, A) \smashcap f^{-1} (\rho(u_1, v_2))\right)
        \end{matrix}
        \arrow[d, "(1 ~ -1)"']
        &
        \begin{matrix}
          C_{\bullet}
          \left((X_0, A_0) \smashcap f^{-1} (\rho(v_1, u_2))\right)
          \\
          \oplus
          \\
          C_{\bullet}
          \left((X_1, A_1) \smashcap f^{-1} (\rho(v_1, u_2))\right)
          \\
          \oplus
          \\
          C_{\bullet}
          \left((X_0, A_0) \smashcap f^{-1} (\rho(u_1, v_2))\right)
          \\
          \oplus
          \\
          C_{\bullet}
          \left((X_1, A_1) \smashcap f^{-1} (\rho(u_1, v_2))\right)
        \end{matrix}
        \arrow[l]
        \arrow[d]
        &
        \begin{matrix}
          C_{\bullet}
          \left(
            (X_0 \cap X_1, A_0 \cap A_1) \smashcap f^{-1} (\rho(v_1, u_2))
          \right)
          \\
          \oplus
          \\
          C_{\bullet}
          \left(
            (X_0 \cap X_1, A_0 \cap A_1) \smashcap f^{-1} (\rho(u_1, v_2))
          \right)        
        \end{matrix}
        \arrow[l]
        \arrow[d, "(1 ~ -1)"]
        \\
        C_{\bullet}
        \left((X, A) \smashcap f^{-1} (\rho(v))\right)
        &
        \begin{matrix}
          C_{\bullet}
          \left((X_0, A_0) \smashcap f^{-1} (\rho(v))\right)
          \\
          \oplus
          \\
          C_{\bullet}
          \left((X_1, A_1) \smashcap f^{-1} (\rho(v))\right)        
        \end{matrix}
        \arrow[l, "(1 ~ -1)"]
        &
        C_{\bullet}
        \left((X_0 \cap X_1, A_0 \cap A_1) \smashcap f^{-1} (\rho(v))\right)
        \arrow[l, "
        \begin{pmatrix}
          1 \\ 1
        \end{pmatrix}
        "]
      \end{tikzcd}
    \end{equation}
  of simplicial chain complexes with coefficients in $K$.
  Note that each of the arrows of the outer square
  of this large diagram \eqref{eq:nineComp} point in the opposite direction
  when compared to the corresponding arrows in \eqref{eq:DsquareConcrete}.
  Now each row and each column of \eqref{eq:nineComp}
  is a short exact sequence of simplicial chain complexes
  and moreover,
  up to a change of sign
  each of the boundary operators from \eqref{eq:DsquareConcrete}
  arises as a boundary map from the zig-zag lemma
  of the corresponding short exact sequence of simplicial chain complexes
  in \eqref{eq:nineComp}.
  Thus,
  the commutativity of \eqref{eq:DsquareConcrete}
  follows from \cref{lem:anticommutingBoundaries}.
\end{proof}

\begin{cor}[Excision]
  \label{cor:exc}
  Let $X$ be a finite simplicial complex with subcomplexes
  $A$ and $B$ such that $X = A \cup B$.
  Moreover, let $f \colon X \rightarrow \R$
  be a piecewise linear map.
  Then the inclusion
  \[(A, A \cap B; f |_A) \hookrightarrow (X, B; f)\]
  induces a natural isomorphism
  $h(A, A \cap B; f |_A) \xrightarrow{\cong} h(X, B; f)$
  in relative interlevel set homology.
\end{cor}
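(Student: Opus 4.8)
The plan is to derive Excision as a direct consequence of the Mayer--Vietoris sequence (\cref{lem:MVseq}) applied to a suitable decomposition, exactly as classical excision for a pair follows from the classical Mayer--Vietoris sequence together with the observation that the homology of a pair $(Z,Z)$ vanishes. Concretely, I would apply \cref{lem:MVseq} to the complex $X$ with the decomposition $X = X_0 \cup X_1$ where $X_0 := A$ and $X_1 := B$, and with the subcomplexes $A_0 := A \cap B$, $A_1 := B$, and $A := B$. One checks the hypotheses: $X_0 \cup X_1 = A \cup B = X$ and $A_0 \cup A_1 = (A \cap B) \cup B = B$, so the decomposition is admissible. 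Note also $X_0 \cap X_1 = A \cap B$ and $A_0 \cap A_1 = (A\cap B) \cap B = A \cap B$, so the ``overlap pair'' is $(A\cap B, A\cap B; f|_{A\cap B})$.

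Next I would invoke the fact that $h$ of a pair $(Z,Z;f|_Z)$ with $A=X$ is the zero functor. This should follow from the construction of $h(X,A;f)$ sketched in the excerpt (and fully carried out in the appendices): the relative interlevel set homology of $f\colon Z\subseteq Z\to\R$ is built from relative homology groups $H_n(f^{-1}(I,C))$ of pairs whose two terms coincide after restricting to $Z$, hence are all zero; equivalently, one expects a long exact sequence reducing $h(Z,Z;f|_Z)$ to zero just as $H_*(Z,Z)=0$. With this in hand, the exact sequence \eqref{eq:mv_seq} for our decomposition reads, after substituting $h(X_0\cap X_1, A_0\cap A_1; f|_{X_0\cap X_1}) = h(A\cap B, A\cap B; f|_{A\cap B}) = 0$,
\begin{equation*}
  0 \to \big(h(A, A\cap B; f|_A)\big)\oplus 0 \xrightarrow{(1\ {-}1)} h(X, B; f) \xrightarrow{\ \partial\ } 0,
\end{equation*}
so the middle map, which is precisely the map induced by the inclusion $(A, A\cap B; f|_A)\hookrightarrow (X,B;f)$, is an isomorphism of diagrams $\M\to\mathrm{Vect}_K$. (One also checks that the map called ``$1$'' in \cref{lem:MVseq} for the summand $h(X_1,A_1;f|_{X_1}) = h(B,B;f|_B) = 0$ is the zero map, so the direct summand contributes nothing, and the component $h(X_0,A_0;f|_{X_0})\to h(X,A;f)$ is indeed induced by inclusion.)

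The main obstacle I anticipate is the lemma that $h(Z,Z;f|_Z)\cong 0$: while morally clear, it requires unwinding the appendix-level definition of $h(X,A;f)$ (the relative construction and its description via preimages of pairs $\rho(u)$), and checking that the relativization genuinely collapses everything to zero on the nose, including all the boundary maps between tiles. If $h(Z,Z;f|_Z)$ is literally defined so that each value is a relative homology group $H_*(\text{something},\text{the same thing})$, this is immediate; otherwise one argues via the long exact sequence of the pair (an Eilenberg--Steenrod-style axiom for $h$, analogous to \cref{lem:MVseq}) applied to $A\subseteq X$ with $A=X$. Everything else—verifying the hypotheses of \cref{lem:MVseq}, identifying the connecting and inclusion maps, and concluding that an exact sequence with zeros on both sides forces the middle map to be an isomorphism—is routine diagram chasing in the functor category $\mathrm{Vect}_K^{\M}$, done objectwise at each $u\in\M$.
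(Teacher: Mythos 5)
Your argument is correct, and it proves exactly the right statement, but it instantiates \cref{lem:MVseq} differently from the paper. You cover $X$ by $X_0 := A$, $X_1 := B$ with relative parts $A_0 := A\cap B$, $A_1 := B$ and total relative part $B$, so the vanishing terms are the intersection pair $h(A\cap B, A\cap B; f|_{A\cap B})$ and the summand $h(B,B;f|_B)$, and the excision map appears as the surviving component of $(1\ {-}1)$ into $h(X,B;f)$. The paper instead takes the Mayer--Vietoris sequence for $(X; B, A) \subseteq (X; X, A)$, i.e.\ $X_0 := X$, $X_1 := A$, $A_0 := B$, $A_1 := A$: there the vanishing terms are $h(A,A;f|_A)$ and the total pair $h(X,X;f)$, and the excision map is the surviving component of $\begin{pmatrix}1\\1\end{pmatrix}$ out of the intersection pair $h(A, A\cap B; f|_A)$. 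Both instantiations are legitimate (your hypotheses check out, and in both cases exactness, extended $T$-periodically to the left, gives injectivity and surjectivity), and both rest on the same auxiliary fact that $h(Z,Z;f|_Z)$ vanishes --- which the paper simply asserts as ``constantly zero'', and which you correctly identify as the only real point to verify; your justification is the right one, since by the construction in \cref{sec:constr_pers} each value is $H_\bullet\bigl((Z,Z)\smashcap f^{-1}(\rho(u))\bigr)$ and $(Z,Z)\smashcap(Y,C)$ has equal components, so every value (and hence the whole functor, boundary maps included) is zero. In short: yours is the textbook derivation of excision from Mayer--Vietoris via the cover $\{A,B\}$, the paper's is a mildly slicker relative variant using the pair $(X,X)$; neither buys anything essential over the other.
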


\begin{proof}
  We consider the Mayer--Vietoris sequence for
  $(X; B, A) \subseteq (X; X, A)$ relative to $f$.
  Since both $h(A, A; f |_A)$ and $h(X, X; f)$
  are constantly zero,
  the isomorphism follows from exactness.
\end{proof}

\begin{cor}[Exact Sequence of a Pair]
  \label{cor:pair}
  Let $(X, A; f)$ be an object in $\mathcal{F}_0$.
  Then we have an exact sequence
  \begin{equation*}
    h(A; f |_A)
    \xrightarrow{\,1\,}
    h(X; f)
    \xrightarrow{\,1\,}
    h(X, A; f)
    \xrightarrow{\,\partial\,}
    h(A; f |_A) \circ T
    \xrightarrow{\,1\,}
    h(X; f) \circ T
    ,
  \end{equation*}
  where $1$ denotes a map induced by inclusion.
\end{cor}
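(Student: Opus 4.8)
The plan is to deduce this directly from the Mayer--Vietoris sequence of \cref{lem:MVseq}, in the same spirit as the proof of \cref{cor:exc}: choose a decomposition for which all but the desired terms become trivial. Concretely, I would apply \cref{lem:MVseq} with the subcomplexes $X_0 := X$, $A_0 := \emptyset$, $X_1 := A$, $A_1 := A$, and $A$ itself as the relative part. The hypotheses are immediate: $X = X_0 \cup X_1$ and $A = A_0 \cup A_1$ hold because $A \subseteq X$, and all the required inclusions are trivial. With these choices one computes $X_0 \cap X_1 = A$ and $A_0 \cap A_1 = \emptyset$, so the five-term sequence \eqref{eq:mv_seq} becomes
\[
h(A, \emptyset; f|_A) \xrightarrow{\binom{1}{1}} h(X, \emptyset; f) \oplus h(A, A; f|_A) \xrightarrow{(1~{-1})} h(X, A; f) \xrightarrow{\ \partial\ } h(A, \emptyset; f|_A)\circ T \xrightarrow{\binom{1}{1}} \bigl( h(X, \emptyset; f) \oplus h(A, A; f|_A) \bigr)\circ T .
\]

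Next I would invoke the fact, already used in the proof of \cref{cor:exc}, that a pair of the form $(Y,Y)$ has vanishing relative interlevel set homology, so that $h(A, A; f|_A)$ is the zero functor. Discarding these zero summands, the maps $\binom{1}{1}$ and $(1~{-1})$ reduce to their surviving coordinates, which are precisely the maps induced by the inclusions of pairs $(A,\emptyset) \hookrightarrow (X,\emptyset)$ and $(X,\emptyset) \hookrightarrow (X,A)$ -- that is, the two maps labelled $1$ in the statement. The displayed sequence then reads exactly as the asserted exact sequence of the pair, and exactness of the resulting diagram $\M \to \mathrm{Vect}_K$ is inherited pointwise from the exactness in \cref{lem:MVseq}.

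I expect this argument to be essentially immediate once the decomposition is in hand; there is no genuine obstacle here. The only things that need care are bookkeeping: verifying that the tuple $(A; \emptyset, A) \subseteq (X; X, A)$ really satisfies the hypotheses of \cref{lem:MVseq}, confirming from the appendix construction that $h(-,-;-)$ sends $(Y,Y)$ to the zero functor (which should follow from $H_*(Y,Y) = 0$), and checking that after deleting the zero summands the structure maps of \eqref{eq:mv_seq} agree on the nose -- with no spurious sign -- with the maps $1$ and $\partial$ named in the corollary. Note that, in contrast to \cref{cor:exc}, no ``unrolling'' of the periodic sequence is needed, since \eqref{eq:mv_seq} already supplies exactly the five terms and three exactness conditions demanded by the statement.
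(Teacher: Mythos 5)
Your proposal is correct and coincides with the paper's own proof, which likewise obtains the sequence as the Mayer--Vietoris sequence of \cref{lem:MVseq} for $(A; \emptyset, A) \subseteq (X; X, A)$ relative to $f$, using that $h(A, A; f|_A)$ vanishes. Your write-up just spells out the bookkeeping that the paper leaves implicit.
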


\begin{proof}
  This is equivalent to the Mayer--Vietoris sequence for
  $(A; \emptyset, A) \subseteq (X; X, A)$ relative to $f$.
\end{proof}

The last corollary implies yet another corollary, which will be useful later.

\begin{cor}
  \label{cor:retract}
  Let $(X, A; f)$ be an object in $\mathcal{F}_0$
  such that there is a fiberwise retraction
  ${r \colon X \rightarrow A}$
  to the inclusion $A \subseteq X$:
   $r |_A = \op{id}_A$ and $f \circ r = f$.
  Then
  \begin{equation*}
    h(X; f) \cong
    h(A; f |_A) \oplus h(X, A; f)
    .
  \end{equation*}
  In particular, we have
  $\op{Dgm} (X; f) =
  \op{Dgm} (A; f |_A) + \op{Dgm} (X, A; f)$.
\end{cor}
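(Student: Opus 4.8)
The plan is to deduce this splitting from the exact sequence of a pair (\cref{cor:pair}) together with Homotopy Invariance (\cref{lem:homotopyInv}), by exhibiting a splitting of the relevant maps. The key observation is that a fiberwise retraction $r \colon X \to A$ provides, via functoriality of $h$, a morphism $h(r) \colon h(X,\emptyset;f) \to h(A,\emptyset;f|_A)$ which is a one-sided inverse to the map $i_* := h(\iota)$ induced by the inclusion $\iota \colon (A,\emptyset;f|_A) \hookrightarrow (X,\emptyset;f)$. Indeed, $r \circ \iota = \op{id}_A$ as a morphism in $\mathcal{F}_0$ (this uses $r|_A = \op{id}_A$ and $f \circ r = f$, so that $r$ is a legitimate morphism $(X,\emptyset;f) \to (A,\emptyset;f|_A)$ and the composite is literally the identity morphism), hence $h(r) \circ i_* = \op{id}$ on $h(A,\emptyset;f|_A)$. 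Here I should be slightly careful: $h$ is only well-defined up to natural isomorphism (the statements of the lemmas use $\cong$), so what I really get is $h(r) \circ i_* \cong \op{id}$; after fixing one choice of $h$ this is an honest equality of natural transformations, which is all I need.

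First I would invoke \cref{cor:pair} to obtain the exact sequence
\begin{equation*}
  h(A,\emptyset;f|_A)
  \xrightarrow{\,i_*\,}
  h(X,\emptyset;f)
  \xrightarrow{\,j_*\,}
  h(X,A;f)
  \xrightarrow{\,\partial\,}
  h(A,\emptyset;f|_A)\circ T
  \xrightarrow{\,(i_*)_T\,}
  h(X,\emptyset;f)\circ T,
\end{equation*}
where I write $(i_*)_T = h(\iota) \circ T$ for the induced map between the $T$-shifted diagrams. Since $i_*$ admits the left inverse $h(r)$, the map $i_*$ is a split monomorphism; in particular $i_*$ is injective (objectwise), so by exactness at $h(A,\emptyset;f|_A)\circ T$ the connecting map $\partial$ is zero, and by exactness at $h(X,\emptyset;f)$ the map $j_*$ has image exactly $\ker(\text{next map})$—but more directly, $\partial = 0$ forces $j_*$ to be surjective (exactness at $h(X,A;f)$). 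So the sequence collapses to a short exact sequence $0 \to h(A,\emptyset;f|_A) \xrightarrow{i_*} h(X,\emptyset;f) \xrightarrow{j_*} h(X,A;f) \to 0$ of diagrams over $\M$, which splits because $i_*$ has a retraction $h(r)$. A split short exact sequence in the abelian category $\mathrm{Vect}_K^{\M}$ of diagrams yields the desired direct sum decomposition $h(X,\emptyset;f) \cong h(A,\emptyset;f|_A) \oplus h(X,A;f)$.

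For the final sentence about diagrams, I would note that $\op{Dgm}$ is additive under direct sums of functors: for functors $F, G \colon \M \to \mathrm{Vect}_K$ one has $\dim_K(F\oplus G)(v) = \dim_K F(v) + \dim_K G(v)$ and $\sum_{u\prec v}\op{Im}(F\oplus G)(u\preceq v) = \big(\sum_{u\prec v}\op{Im}F(u\preceq v)\big)\oplus\big(\sum_{u\prec v}\op{Im}G(u\preceq v)\big)$, so the defining formula for $\op{Dgm}$ is additive. Applying this to the isomorphism just established gives $\op{Dgm}(X,\emptyset;f) = \op{Dgm}(A,\emptyset;f|_A) + \op{Dgm}(X,A;f)$ as multisets on $\op{int}\M$. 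Strictly this requires knowing $h(X,A;f)$ also vanishes on $\partial\M$ so that $\op{Dgm}$ is defined on it, which follows from the construction; alternatively one restricts to $\op{int}\M$ throughout.

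The main obstacle is mostly bookkeeping rather than mathematical depth: verifying that $r$ genuinely defines a morphism in $\mathcal{F}_0$ (continuity plus the triangle \eqref{eq:commHom} commuting, which is exactly $f\circ r = f$) and, more delicately, handling the fact that $h$ is defined only up to natural isomorphism so that ``$h(r)\circ h(\iota) = \op{id}$'' must be read as holding after pinning down a representative of $h$. One also needs that $h(r\circ\iota) = h(\op{id}) = \op{id}$, i.e. that $h$ respects composition and identities strictly enough for the splitting argument—this is part of the functoriality of $h$ established in the appendices, which I am entitled to assume. With that in hand, the argument is the standard ``a retraction splits the long exact sequence of a pair'' reasoning, transported verbatim into the category of $\M$-indexed diagrams.
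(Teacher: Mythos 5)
Your proposal is correct and follows essentially the same route as the paper: functoriality of $h$ applied to the retraction $r$ gives a left inverse to the inclusion-induced map, so by exactness of the sequence from \cref{cor:pair} the connecting map $\partial$ vanishes, yielding a split short exact sequence and hence the direct sum decomposition (the appeal to homotopy invariance in your opening sentence is not actually needed). The additivity of $\op{Dgm}$ under direct sums, which you spell out, is exactly the implicit justification of the paper's "in particular" clause.
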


\begin{proof}
  We consider the exact sequence from the previous corollary.
  The map $r$ yields a left inverse to the map induced by inclusion,
  \begin{equation*}
    h(A; f |_A)
    \xrightarrow{~1~}
    h(X; f),
  \end{equation*}
  so by exactness $\partial \circ T^{-1} = 0 = \partial$.
  Thus we obtain the split exact sequence
  \begin{equation*}
    0
    \rightarrow
    h(A; f |_A)
    \xrightarrow{~1~}
    h(X; f)
    \xrightarrow{~1~}
    h(X, A; f)
    \rightarrow
    0
    ,
  \end{equation*}
  and hence
  \begin{equation*}
    h(X; f) \cong h(A; f |_A) \oplus h(X, A; f)
    .
    \qedhere
  \end{equation*}
\end{proof}

\begin{lem}[Additivity]
  \label{lem:add}
  \sloppy
  Let $(X, A; f)$ be an object of $\mathcal{F}_0$,
  and let ${\{(X_i, A_i) \subseteq (X, A) \mid i = 1, \dots, n\}}$
  be a family of pairs of subcomplexes
  with $X = \coprod_{i=1}^n X_i$ and $A = \coprod_{i=1}^n A_i$.
  Then the inclusions induce a natural isomorphism
  \begin{equation*}
    \bigoplus_{i=1}^n h(X_i, A_i; f |_{X_i})
    \xrightarrow{~\cong~}
    h(X, A; f)
    .
  \end{equation*}
\end{lem}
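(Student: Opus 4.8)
The plan is to reduce the statement to the case $n=2$ and then run an induction on $n$, using the Mayer--Vietoris sequence (\cref{lem:MVseq}) as the only real input. The one auxiliary fact I would first record is that $h$ sends the empty object $(\emptyset,\emptyset;f|_\emptyset)$ to the zero functor $\M\to\mathrm{Vect}_K$; this is immediate from the construction of $h$ in Appendices \ref{sec:auxPairs} through \ref{sec:constr_pers}, since every (relative) homology group of the empty space vanishes and hence so do all structure maps of the diagram.

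For $n=2$, suppose $X = X_0 \sqcup X_1$ and $A = A_0 \sqcup A_1$ with $(X_i,A_i)\subseteq (X,A)$, so in particular $A_i = A\cap X_i$. Since $X_0\cap X_1 = \emptyset$ and $A_0\cap A_1=\emptyset$, the Mayer--Vietoris sequence for $(A;A_0,A_1)\subseteq(X;X_0,X_1)$ relative to $f$ from \cref{lem:MVseq} has its first term $h(X_0\cap X_1,A_0\cap A_1;f|_{X_0\cap X_1})$ and its fourth term $h(X_0\cap X_1,A_0\cap A_1;f|_{X_0\cap X_1})\circ T$ both equal to the zero functor, so by exactness it degenerates to
\[
0 \longrightarrow h(X_0,A_0;f|_{X_0})\oplus h(X_1,A_1;f|_{X_1}) \xrightarrow{\;(1\ {-}1)\;} h(X,A;f) \longrightarrow 0,
\]
showing $(1\ {-}1)$ is an isomorphism of diagrams. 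Precomposing with the automorphism $(\xi_0,\xi_1)\mapsto(\xi_0,-\xi_1)$ of the direct sum, the map induced by the two inclusions, $(1\ 1)$, is also an isomorphism; this is exactly the claim for $n=2$. The cases $n=1$ (identity) and $n=0$ (both sides zero) are trivial.

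For general $n$ I would write $X = X_1 \sqcup X'$ with $X' = \coprod_{i=2}^{n} X_i$ and $A = A_1 \sqcup A'$ with $A' = \coprod_{i=2}^{n} A_i$. The case $n=2$ gives an inclusion-induced isomorphism $h(X_1,A_1;f|_{X_1})\oplus h(X',A';f|_{X'}) \xrightarrow{\cong} h(X,A;f)$, and the inductive hypothesis gives an inclusion-induced isomorphism $\bigoplus_{i=2}^n h(X_i,A_i;f|_{X_i}) \xrightarrow{\cong} h(X',A';f|_{X'})$. Composing these and invoking functoriality of $h$ (so that the composite of the inclusion-induced maps $X_i\hookrightarrow X'\hookrightarrow X$ is the map induced by $X_i\hookrightarrow X$) yields the desired isomorphism $\bigoplus_{i=1}^n h(X_i,A_i;f|_{X_i}) \xrightarrow{\cong} h(X,A;f)$, induced by the inclusions.

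I do not expect a serious obstacle here: all the content sits in \cref{lem:MVseq} and in the vanishing of $h$ on the empty object, and the remaining work is the bookkeeping of the induction together with the observation that the map $(1\ {-}1)$ appearing in Mayer--Vietoris differs from the sum-of-inclusions map $(1\ 1)$ only by an automorphism of the source. As an alternative one could argue directly from the definition of $h$: for a disjoint union one has $f^{-1}(S) = \coprod_i (f|_{X_i})^{-1}(S)$ for every $S\subseteq\overline{\R}$, so additivity of homology over finite disjoint unions gives a levelwise decomposition $h(X,A;f)(u)\cong\bigoplus_i h(X_i,A_i;f|_{X_i})(u)$ compatible with the inclusions, and one checks that all structure maps (the zero maps and the Mayer--Vietoris connecting maps) respect it; but this requires unwinding the appendix construction of $h$ on pairs, whereas the inductive argument above uses only results already established.
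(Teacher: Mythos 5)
Your proposal is correct and follows essentially the same route as the paper, whose proof is exactly the one-line remark that the claim ``follows from the additivity of homology or by induction from the Mayer--Vietoris sequence''; you have simply spelled out both of these options in detail, including the vanishing of $h$ on the empty pair and the harmless sign discrepancy between $(1~{-}1)$ and the sum-of-inclusions map.
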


\begin{proof}
  This follows from the additivity of homology
  or by induction from the Mayer--Vietoris sequence \cref{lem:MVseq}.
\end{proof}

\begin{lem}[Dimension]
  \label{lem:dim}
  Let $f \colon [0, 1] \rightarrow \R$
  be an order-preserving affine map.
  Then 
  \begin{equation*}
    \op{Dgm}(f) = \mathbf{1}_u,
  \end{equation*}
  where $\rho(u) = ([f(0), f(1)], \emptyset)$.
\end{lem}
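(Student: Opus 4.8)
The plan is to compute $h(f)$ explicitly for a monotone affine map $f\colon[0,1]\to\R$ and read off the diagram. After reparametrizing, we may assume $f(0) = a < b = f(1)$, so $f$ is a homeomorphism onto $[a,b]$. First I would identify the restriction $h(f)|_D$ on the fundamental domain. By definition this is the functor $u \mapsto H_0(f^{-1}(\rho(u)))$, where $\rho(u) = (I,C)$ with $I$ a closed interval and $C$ the complement of an open interval, subject to $I\setminus C$ an interval in $\R$. Since $f$ is a homeomorphism onto $[a,b]$, the preimage $f^{-1}(I,C)$ is a pair of closed subintervals of $[0,1]$ (possibly empty), and $H_0$ of such a pair is either $K$ or $0$: it is $K$ precisely when $f^{-1}(I)\setminus f^{-1}(C) = f^{-1}(I\setminus C)$ is nonempty, i.e.\ when $[a,b]\cap(I\setminus C)\neq\emptyset$, and $0$ otherwise. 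So $h(f)|_D$ is an interval module supported on the set $\{u\in D : [a,b]\cap(\rho(u)_1\setminus\rho(u)_2)\neq\emptyset\}$, and using the third essential property of $\rho$ (preservation of axis-aligned joins and meets) together with monotonicity one checks this support is the up-set $\uparrow u_0$ within $D$, where $u_0$ is the point with $\rho(u_0) = ([a,b],\emptyset)$ — this $u_0$ is exactly the point $u$ named in the statement, since $[f(0),f(1)] = [a,b]$ and the empty-difference locus forces $C = \emptyset$ for bounded $I$.

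Next I would argue that $h(f)$ vanishes on every other tile $T^{-n}(D)$ for $n\neq 0$, and that all connecting maps between tiles are zero. The tile $T^{-n}(D)$ carries the functor $u\mapsto H_n(f^{-1}(\rho(T^n u)))$; since $f^{-1}(\rho(v))$ is always a pair of subspaces of the $1$-dimensional complex $[0,1]$, its relative homology vanishes in all degrees $n\neq 0$ (for $n\geq 2$ trivially, for $n = 1$ because a pair $(J, J')$ with $J'\subseteq J$ closed subintervals of $[0,1]$ has $H_1(J,J') = 0$). Hence $h(f)$ is supported entirely on the tile $D$, concentrated in degree $0$, so all inter-tile maps automatically vanish and $h(f)$ is the interval module $K_{\uparrow u_0}$ (within $\M$, extended by zero). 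Alternatively, and perhaps more cleanly, I would invoke \cref{cor:retract} or \cref{lem:dim}-style bookkeeping: but the direct computation above is self-contained and short.

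Finally I would compute $\op{Dgm}(h(f))$ from the definition. For an interval module $K_{\uparrow u_0}$ on $\M$, the formula $\op{Dgm}(F)(v) = \dim_K F(v) - \dim_K\sum_{u\prec v}\op{Im}F(u\preceq v)$ gives $1$ at $v = u_0$ (there $F(u_0) = K$ but no $u\prec u_0$ maps onto it, since all such $u$ have $F(u) = 0$ or lie outside $\uparrow u_0$ — here I use that $u_0$ is the unique minimum of the support, which follows from $\rho$ preserving meets on rectangles in $D$) and $0$ at every other $v$ (for $v\succ u_0$ in the support, the map $F(u_0\preceq v)$ is the identity $K\to K$, so the subtracted term is all of $F(v)$; for $v$ outside the support, $F(v) = 0$). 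Therefore $\op{Dgm}(f) = \mathbf{1}_{u_0}$, as claimed. The main obstacle is the bookkeeping in the first two steps — pinning down precisely that the support of $h(f)|_D$ is the up-set of the single point $u_0$ with $\rho(u_0) = ([a,b],\emptyset)$, and carefully justifying that this $u_0$ is both the minimum of the support and an interior point of $\M$ (so that it is counted in $\op{Dgm}$, which is defined on $\op{int}\M$) — rather than any homological subtlety, since everything lives on a $1$-complex.
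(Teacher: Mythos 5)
The paper states this lemma without proof, so your argument has to stand on its own, and it does not: the homological bookkeeping behind both of your first two steps is incorrect. First, your criterion for degree zero is wrong. For a pair $(J,J')$ with $J=f^{-1}(I)$ a nonempty interval and $J'=f^{-1}(C)\neq\emptyset$, the long exact sequence gives $H_0(J,J')=\operatorname{coker}\bigl(H_0(J')\to H_0(J)\bigr)=0$ because $J$ is connected; so $H_0(f^{-1}(\rho(u)))$ is $K$ precisely when $f^{-1}(I)\neq\emptyset$ \emph{and} $f^{-1}(C)=\emptyset$, not when $f^{-1}(I\setminus C)\neq\emptyset$ (take $I=[-\infty,t]$, $C=[-\infty,s]$ with $a<s<t<b$ for a counterexample). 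Second, and more seriously, the vanishing outside the tile $D$ fails: $C$ is the complement of an open interval, so $f^{-1}(C)$ may consist of \emph{two} disjoint closed intervals, and then $H_1\bigl(f^{-1}(\overline{\R}),f^{-1}(C)\bigr)\cong\ker\bigl(H_0(f^{-1}(C))\to H_0([0,1])\bigr)\cong K$. Exactly these pairs (complement of an open interval $(s,t)$ with $a\le s<t\le b$) occur in $\rho(D)$, so $h(f)$ is nonzero on part of the degree-one tile $T^{-1}(D)$. These degree-one relative groups are the essential (extended) part of the persistence of a monotone function — they are the content of the lemma, not a negligible case — so the claim that $h(f)$ is concentrated on $D$ with all inter-tile maps zero is false.

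As a consequence the identification $h(f)\cong K_{\uparrow u_0}$ and the final diagram computation collapse. Even with your own degree-zero criterion the support of $h(f)|_D$ is not $\uparrow u_0$: every point $\blacktriangle(c)$ with $c\in[a,b]$ satisfies $\rho(\blacktriangle(c))=(\{c\},\emptyset)$, hence carries $H_0(\mathrm{pt})=K$, and these points lie strictly below $u_0$ (their first $\rho$-components are proper subintervals of $[a,b]$) while not lying in $\uparrow u_0$. Moreover the structure maps from them into $u_0$ are induced by including a point into the connected set $f^{-1}([a,b])$, hence are isomorphisms; so in your truncated model the term $\sum_{u\prec u_0}\op{Im}h(f)(u\preceq u_0)$ already equals $h(f)(u_0)$ and the formula for $\op{Dgm}$ would return $0$ at $u_0$, not $1$. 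What actually has to be proved is that the support of $h(f)$ is a single region straddling $D$ and $T^{-1}(D)$, with the Mayer--Vietoris connecting maps across the tile boundary (which you set to zero) acting isomorphically, so that the births along $\op{Im}\blacktriangle$ are absorbed and exactly one point of the diagram remains; without analysing those boundary maps the location and even the existence of the single diagram point cannot be established. (A small additional gap: a monotone affine map may be constant, and reparametrizing does not reduce that case to $f(0)<f(1)$.)
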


Here $\mathbf{1}_u$, for $u \in \M$, is the indicator function
$\mathbf{1}_u \colon v \mapsto
  \begin{cases}
    1 & v = u \\
    0 & v \neq u .
  \end{cases}$

\section{Universality of the Bottleneck Distance}

In this section, we prove our main result, by providing a construction that realizes any given extended persistence diagram as the relative interlevel set homology of a function.
We then further extend this construction to realize any $\delta$-matching between extended persistence diagrams as a pair of functions on a common domain and with distance $\delta$ in the supremum norm.

\subsection{Lifting Points in $\M$}
\label{sec:liftingPoints}

We start by defining the notion of a \emph{lift}
of a point $u \in \M$, which is the construction of a function whose extended persistence diagram consists only of the single point $u$.

\begin{definition}
  A \emph{lift} of a point $u \in \M$ is
  a function $f \colon A \subseteq X \rightarrow \R$
  in the category $\mathcal{F}_0$ with
  \[\op{Dgm} (X, A; f) = \mathbf{1}_u \big|_{\op{int} \M}.\]
\end{definition}

We note that for any point $u \in \partial \M$ the inclusion
of the empty set $\emptyset \subset \R$ is a lift of $u$.
For technical reasons, we also allow
for boundary points in the definition, in order to avoid some case distinctions.
As already noted in \cref{sec:persDiagram},
the extended persistence diagram $\op{Dgm}(X, A; f)$
for a function $f \colon A \subseteq X \rightarrow \R$
in $\mathcal{F}_0$ is supported
on the intersection $\mathbb{S}$ of
the downset $\downarrow \op{Im} \blacktriangle \subseteq \M$
with the union of open squares
$\left(-\frac{\pi}{2}, \frac{\pi}{2}\right)^2 + \pi \Z^2$.
In particular, there are no lifts for any points contained in
$\op{int} \M \setminus \mathbb{S}$.

\begin{figure}[t]
  \centering
\begingroup%
  \makeatletter%
  \providecommand\color[2][]{%
    \errmessage{(Inkscape) Color is used for the text in Inkscape, but the package 'color.sty' is not loaded}%
    \renewcommand\color[2][]{}%
  }%
  \providecommand\transparent[1]{%
    \errmessage{(Inkscape) Transparency is used (non-zero) for the text in Inkscape, but the package 'transparent.sty' is not loaded}%
    \renewcommand\transparent[1]{}%
  }%
  \providecommand\rotatebox[2]{#2}%
  \newcommand*\fsize{\dimexpr\f@size pt\relax}%
  \newcommand*\lineheight[1]{\fontsize{\fsize}{#1\fsize}\selectfont}%
  \ifx\svgwidth\undefined%
    \setlength{\unitlength}{327.34755707bp}%
    \ifx\svgscale\undefined%
      \relax%
    \else%
      \setlength{\unitlength}{\unitlength * \real{\svgscale}}%
    \fi%
  \else%
    \setlength{\unitlength}{\svgwidth}%
  \fi%
  \global\let\svgwidth\undefined%
  \global\let\svgscale\undefined%
  \makeatother%
  \begin{picture}(1,0.34367142)%
    \lineheight{1}%
    \setlength\tabcolsep{0pt}%
    \put(0.99999998,0.03730095){\makebox(0,0)[t]{\lineheight{1.25}\smash{\begin{tabular}[t]{c}$0$\end{tabular}}}}%
    \put(0,0){\includegraphics[width=\unitlength,page=1]{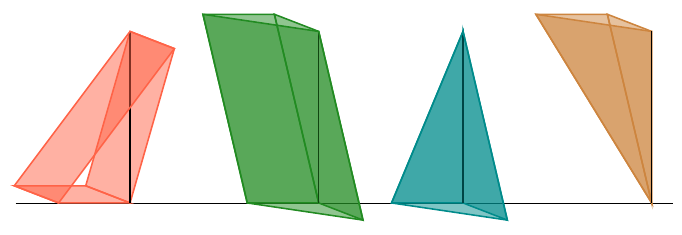}}%
    \put(0.99999998,0.28876785){\makebox(0,0)[t]{\lineheight{1.25}\smash{\begin{tabular}[t]{c}$1$\end{tabular}}}}%
    \put(0,0){\includegraphics[width=\unitlength,page=2]{point-lifts.pdf}}%
  \end{picture}%
\endgroup%

\begingroup%
  \makeatletter%
  \providecommand\color[2][]{%
    \errmessage{(Inkscape) Color is used for the text in Inkscape, but the package 'color.sty' is not loaded}%
    \renewcommand\color[2][]{}%
  }%
  \providecommand\transparent[1]{%
    \errmessage{(Inkscape) Transparency is used (non-zero) for the text in Inkscape, but the package 'transparent.sty' is not loaded}%
    \renewcommand\transparent[1]{}%
  }%
  \providecommand\rotatebox[2]{#2}%
  \newcommand*\fsize{\dimexpr\f@size pt\relax}%
  \newcommand*\lineheight[1]{\fontsize{\fsize}{#1\fsize}\selectfont}%
  \ifx\svgwidth\undefined%
    \setlength{\unitlength}{305.25bp}%
    \ifx\svgscale\undefined%
      \relax%
    \else%
      \setlength{\unitlength}{\unitlength * \real{\svgscale}}%
    \fi%
  \else%
    \setlength{\unitlength}{\svgwidth}%
  \fi%
  \global\let\svgwidth\undefined%
  \global\let\svgscale\undefined%
  \makeatother%
  \begin{picture}(1,0.59459459)%
    \lineheight{1}%
    \setlength\tabcolsep{0pt}%
    \put(0,0){\includegraphics[width=\unitlength,page=1]{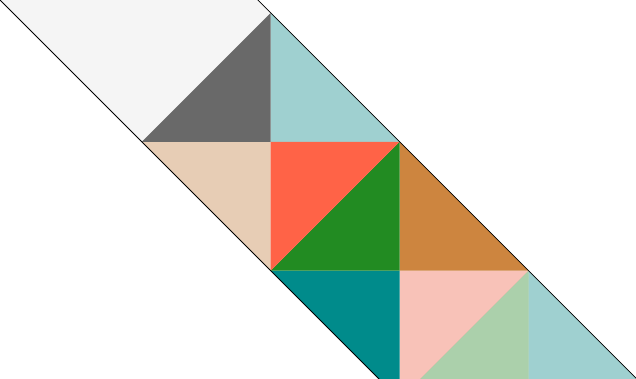}}%
    \put(0.8277027,0.01689189){\makebox(0,0)[t]{\lineheight{1.25}\smash{\begin{tabular}[t]{c}$T^{-3}(D)$\end{tabular}}}}%
    \put(0.56418919,0.18581081){\makebox(0,0)[t]{\lineheight{1.25}\smash{\begin{tabular}[t]{c}$T^{-2}(D)$\end{tabular}}}}%
    \put(0.48986486,0.33445946){\makebox(0,0)[t]{\lineheight{1.25}\smash{\begin{tabular}[t]{c}$T^{-1}(D)$\end{tabular}}}}%
    \put(0.24662162,0.54054054){\makebox(0,0)[t]{\lineheight{1.25}\smash{\begin{tabular}[t]{c}$D$\end{tabular}}}}%
    \put(0.35135135,0.42567568){\makebox(0,0)[lt]{\lineheight{1.25}\smash{\begin{tabular}[t]{l}$\textcolor{White}{R}$\end{tabular}}}}%
    \put(0,0){\includegraphics[width=\unitlength,page=2]{lift-regions.pdf}}%
    \put(0.76407617,0.26295086){\makebox(0,0)[t]{\lineheight{1.25}\smash{\begin{tabular}[t]{c}$l_1$\end{tabular}}}}%
    \put(0.32074693,0.24006388){\makebox(0,0)[t]{\lineheight{1.25}\smash{\begin{tabular}[t]{c}$l_0$\end{tabular}}}}%
  \end{picture}%
\endgroup%

  \caption{
    The region $R \subset \mathbb{S}$ is shaded in dark gray.
    The remaining regions in $\mathbb{I} := \mathbb{S} \setminus R$
    are colored and the corresponding lifts are shown at the top.
    All of the points of the extended subdiagram of degree $1$
    representing \emph{vertical homology classes}
    in the sense of \cite[Section 5]{Cohen-Steiner2009}
    are contained in the dark red region,
    while those representing \emph{horizontal homology classes}
    are contained in the dark green region.
    The dark cyan region corresponds to the ordinary subdiagram
    of degree $1$ and the brown region to the relative subdiagram
    of degree $2$;
    see also \cref{fig:subdiagrams}.
  }
  \label{fig:liftRegions}
\end{figure}

We now construct a lift
$f_u \colon A_u \subseteq X_u \rightarrow \R$
for any point $u \in \mathbb{S}$.
To this end,
we partition $\mathbb{S}$ into regions shown in \cref{fig:liftRegions}.
We start with the region $R \subset \mathbb{S}$,
which is the connected component of $\mathbb{S}$ containing the origin,
shaded in dark gray in \cref{fig:liftRegions}.
For any point $u \in R$, a lift $f_u \colon [0, 1] \rightarrow \R$
is provided by \cref{lem:dim}.
All remaining points of $\mathbb{S}$
are contained in $\mathbb{I} := \mathbb{S} \setminus R$.
We use a construction that is sketched in \cref{fig:liftRegions}
and formalized in \cref{sec:formal_lifts_points}.
This figure shows four pairs of geometric simplicial complexes
with ambient space $\R^3$ and beneath them the strip $\M$
with some of the regions colored.
Moreover,
we identify the first and the second simplicial pair
from the left in \cref{fig:liftRegions}
in the obvious way.
The relative part of each simplicial pair is given by the solid black line.
Four of the regions of $\M$ are shaded with saturated colors;
for a point $u$ in one of these four regions, the corresponding lift
is given by the height function
$r_u \colon A_u \subset X_u \rightarrow [0, 1]$
of the simplicial pair shaded
in the same color, post-composed with the appropriate order-preserving
affine map $b_u \colon [0, 1] \rightarrow \R$;
see \eqref{eq:bu} for an explicit formula for $b_u$.
This way we obtain a lift $f_u := b_u \circ r_u$
for each point $u$ in one of these four regions.
Moreover, for each such $u$ there is a level-preserving isomorphism
$j_u \colon [0, 1] \rightarrow A_u$,
where $A_u$ is the solid black line.
Furthermore, the red simplicial pair and the green simplicial pair
admit an isomorphism preserving the levels of the solid black line.
Thus, we may think of the domains of our choices of lifts
for the saturated red and the saturated green region as being identical.
The other regions of $\mathbb{I}$ are shaded in a pale color.
For a point in one of these regions, the corresponding lift
is given by a higher- or lower-dimensional version
of a lift for the region pictured in a more saturated version
of the same color.
We provide a formal construction of the maps
$r_u \colon X_u \rightarrow [0, 1]$,
$b_u \colon [0, 1] \rightarrow \R$, and
$j_u \colon [0, 1] \xrightarrow{\cong} A_u \subset X_u$
for each $u \in \mathbb{I}$
in \cref{sec:formal_lifts_points}.
Finally, we note that our choices of lifts satisfy the following
essential property.

\begin{lem}
  \label{lem:isoFam}
  The family $\{(X_u, A_u; f_u) \mid u \in \mathbb{S}\}$
  of lifts
  is \emph{isometric} in the sense that any points
  $u, v \in \mathbb{S}$ of finite distance $d(u, v) < \infty$
  satisfy
  \begin{equation*}
    (X_u, A_u) = (X_v, A_v)
    \quad \text{and} \quad
    \left\lVert f_u - f_v \right\rVert_{\infty} = d(u, v)
    .
  \end{equation*}
\end{lem}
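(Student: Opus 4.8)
The plan is to verify the two claimed identities by examining the explicit construction of lifts region by region, using the structure already set up in \cref{sec:liftingPoints}. First I would observe that the statement is trivially satisfied when $u, v$ lie in the same connected component of $\mathbb{S}$ as the origin, i.e.\ in $R$: there $f_u, f_v \colon [0,1] \to \R$ are both monotone affine maps furnished by \cref{lem:dim}, with $\rho(u) = ([f_u(0), f_u(1)], \emptyset)$ and likewise for $v$, so $(X_u, A_u) = ([0,1], \emptyset) = (X_v, A_v)$, and $\lVert f_u - f_v \rVert_\infty = \max(|f_u(0) - f_v(0)|, |f_u(1) - f_v(1)|)$. Since $\rho$ restricted to $R$ translates meets/joins into interval endpoints and the metric $d$ on $R \subseteq \M$ is, by construction, the maximum of $d_0$ on the two coordinates, this maximum equals $d(u,v)$ exactly when $d(u,v) < \infty$ — the finiteness hypothesis is precisely what rules out a pole of $\tan$ separating the relevant endpoints, so the affine maps can indeed be chosen to agree at the appropriate scale.

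Next I would treat the generic case where $u, v$ lie in $\mathbb{I} = \mathbb{S} \setminus R$. The key point is that $d(u,v) < \infty$ forces $u$ and $v$ to lie in the \emph{same} connected component of $\mathbb{S}$, hence (by the tessellation of $\mathbb{S}$ into the colored regions of \cref{fig:liftRegions}, each of which sits inside a single open square of $\left(-\frac{\pi}{2}, \frac{\pi}{2}\right)^2 + \pi\Z^2$) in regions assigned lifts of the \emph{same combinatorial type}: the same simplicial pair $(X_u, A_u) = (X_v, A_v)$ up to the level-preserving identifications $j_u, j_v \colon [0,1] \xrightarrow{\cong} A_u$, with the lift being $f_u = b_u \circ r_u$ for a fixed height function $r_u \colon X_u \to [0,1]$ and a monotone affine reparametrization $b_u$ depending on $u$. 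The identity $(X_u, A_u) = (X_v, A_v)$ is then immediate from the construction in \cref{sec:formal_lifts_points}; I would note here that the paper has already built in the isomorphism between the red and green simplicial pairs precisely so that regions of different colors but equal combinatorial type share a domain, which is what makes the claim uniform across region boundaries within a component.

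For the metric identity in this case I would reduce to the affine-reparametrization part: since $r_u = r_v$ (same height function on the same complex) and $f_u = b_u \circ r_u$, $f_v = b_v \circ r_v$, we get $\lVert f_u - f_v \rVert_\infty = \lVert b_u - b_v \rVert_\infty$ computed over $[0,1]$, which is again a maximum of the discrepancies at the two endpoints $b_u(0) - b_v(0)$ and $b_u(1) - b_v(1)$. These endpoints are, by the defining property of the lifts, read off from $\rho(u)$ and $\rho(v)$ via $\blacktriangle$ and $\tan$, exactly the coordinates entering the definition of $d$ on $\mathbb{S} \subseteq \R^\circ \times \R$; so $\lVert b_u - b_v \rVert_\infty = d(u,v)$ by the same maximum-of-$d_0$ argument as in the $R$ case, with finiteness of $d(u,v)$ ensuring no $\tan$-pole obstruction. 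The main obstacle I anticipate is not any single estimate but the bookkeeping: making sure that across \emph{every} pair of colored regions lying in a common component — including the pale, higher/lower-dimensional variants — the domains genuinely coincide on the nose (not merely up to isomorphism) and that the affine maps $b_u$ are normalized consistently, so that the piecewise-defined family $u \mapsto (X_u, A_u; f_u)$ is globally isometric rather than just isometric within each region. This is where careful reference to the formal construction of $r_u$, $b_u$, $j_u$ in \cref{sec:formal_lifts_points} is essential, and I would organize the proof as a finite check over the (finitely many) types of adjacent regions.
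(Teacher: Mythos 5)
Your overall skeleton is the right one (finite $d$-distance forces $u,v$ into a single open square of $\left(-\frac{\pi}{2},\frac{\pi}{2}\right)^2+\pi\Z^2$, hence into a single connected component of $\mathbb{S}$, on which the simplicial pair is constant by construction, and the sup-norm computation reduces to comparing values at finitely many vertices); the paper itself states the lemma without proof, so there is no official argument to compare against. Two points need repair, one of them substantive. The substantive one: your reduction $\lVert f_u-f_v\rVert_\infty=\lVert b_u-b_v\rVert_\infty$ rests on the claim $r_u=r_v$, and this is false precisely in the components of type $T^{-n}(E_3)$ (the cylinder regions $X_u=\partial\Delta^{n+1}\times\Delta^1$): as spelled out in \cref{sec:formal_lifts_points}, the retraction there is the vertex-collapse map when $v_1\leq v_2$ and $\pr_2$ when $v_1>v_2$, so two points of the same component lying on opposite sides of that diagonal carry genuinely different retractions, and $f_u=b_u\circ r_u$, $f_v=b_v\circ r_v$ do not differ merely by a reparametrization of $[0,1]$. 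Compounding this, $b_u(0)$ and $b_u(1)$ are $\min$ and $\max$ of $\op{fauxtan}\{u_1,u_2\}$, and the coordinate realizing the minimum swaps exactly across that same diagonal; so a vertexwise comparison produces the cross terms $\lvert\op{fauxtan}(u_1)-\op{fauxtan}(v_2)\rvert$ and $\lvert\op{fauxtan}(u_2)-\op{fauxtan}(v_1)\rvert$ in addition to the coordinatewise ones. The lemma is still true, but you must add the missing estimate: because $u$ lies on the side where $\op{fauxtan}(u_1)\leq\op{fauxtan}(u_2)$ and $v$ on the side where the inequality is reversed (say), the cross terms are dominated by $\max\bigl(d_0(u_1,v_1),d_0(u_2,v_2)\bigr)$, while both coordinatewise differences are attained at actual vertices of $\partial\Delta^{n+1}\times\Delta^1$ (e.g.\ at $(0,1)$ and $(n+1,0)$), giving equality with $d(u,v)$. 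The paper's remark that $f_u$ is constant on the crossover locus is a hint that this case needs its own treatment, not a license to assume a common retraction.

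The lesser point: to get from ``$d(u,v)<\infty$'' to ``same component'' you use that each colored region sits inside one open square, but you also need the converse bookkeeping fact that \emph{distinct} components of $\mathbb{S}$ sit in \emph{distinct} open squares (equivalently, each square meets at most one of $R$, $T^{-n}(E_1)$, $T^{-n}(E_2)$, $T^{-n}(E_3)$). This is true — the squares containing the components have pairwise distinct centers in $\pi\Z^2$, as one checks from the action of $T\colon(x,y)\mapsto(-\pi-y,\pi-x)$ on the centers of the squares containing $E_1$, $E_2$, $E_3$ — but it is exactly the step that protects the first identity $(X_u,A_u)=(X_v,A_v)$ from a counterexample with, say, a horn and a cylinder at finite distance, so it should be stated and verified rather than folded into ``the tessellation''. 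Your treatment of the $R$ case and of the $E_1$-, $E_2$-type components (where the retraction and the min/max assignment really are constant) is fine.
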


\begin{proof}
  Let ${u, v \in \mathbb{S}}$ be of finite distance ${d(u, v) < \infty}$.
  As we identified the first two simplicial pairs of \cref{fig:liftRegions}
  or by the construction of \cref{sec:formal_lifts_points}
  we have
  ${(X_u, A_u) = (X_v, A_v)}$ and ${j_u = j_v}$.
  Now if $u$ and $v$ are contained in a region
  corresponding to an ordinary or a relative subdiagram
  as shown in \cref{fig:subdiagrams},
  then we have
  ${\left\lVert b_u - b_v \right\rVert_{\infty} = d(u, v)}$.
  As the retractions ${r_u \colon A_u \subset X_u \rightarrow [0, 1]}$
  and $r_v$ are level-preserving,
  we have
  ${\left\lVert f_u - f_v \right\rVert_{\infty} = d(u, v)}$
  as well.
  Moreover,
  if $u$ and $v$ are contained in a region
  corresponding to an extended subdiagram,
  then the distance
  $d(u, v)$ is realized by the restrictions of $f_u$ and $f_v$
  to the vertices of ${X_u = X_v}$
  that are not in ${A_u = A_v}$.
\end{proof}

\subsection{Lifting Multisets}
\label{sec:liftingMultisets}

We proceed to extend our construction of lifts from single points to arbitrary persistence diagrams.
In order to motivate our next definition, we first consider the height function shown in \cref{fig:booklet}
and its persistence diagram.
Let ${a = (a_1, a_2)}$ be the green vertex
of the persistence diagram shown in \cref{fig:booklet}.
Letting $\sigma \colon \R \rightarrow \R, \ t \mapsto \pi - t$
be the reflection at $\frac{\pi}{2}$,
we consider the region
${([a_2, a_1] \cup \sigma [a_2, a_1])^2 + 2 \pi \Z^2}$.
The intersection of $\M$ and this region is the green shaded area
in \cref{fig:booklet_diagram_adm}.
As we can see, both the blue and the red vertex are contained
in this green area.
As it turns out, this is true for any realizable persistence diagram.
More specifically,
any realizable persistence diagram
$\mu \colon \op{int} \M \rightarrow \N_0$
satisfies both conditions
of the following definition.

\begin{figure}[t]
  \centering
\begingroup%
  \makeatletter%
  \providecommand\color[2][]{%
    \errmessage{(Inkscape) Color is used for the text in Inkscape, but the package 'color.sty' is not loaded}%
    \renewcommand\color[2][]{}%
  }%
  \providecommand\transparent[1]{%
    \errmessage{(Inkscape) Transparency is used (non-zero) for the text in Inkscape, but the package 'transparent.sty' is not loaded}%
    \renewcommand\transparent[1]{}%
  }%
  \providecommand\rotatebox[2]{#2}%
  \newcommand*\fsize{\dimexpr\f@size pt\relax}%
  \newcommand*\lineheight[1]{\fontsize{\fsize}{#1\fsize}\selectfont}%
  \ifx\svgwidth\undefined%
    \setlength{\unitlength}{360.6818161bp}%
    \ifx\svgscale\undefined%
      \relax%
    \else%
      \setlength{\unitlength}{\unitlength * \real{\svgscale}}%
    \fi%
  \else%
    \setlength{\unitlength}{\svgwidth}%
  \fi%
  \global\let\svgwidth\undefined%
  \global\let\svgscale\undefined%
  \makeatother%
  \begin{picture}(1,0.47826087)%
    \lineheight{1}%
    \setlength\tabcolsep{0pt}%
    \put(0,0){\includegraphics[width=\unitlength,page=1]{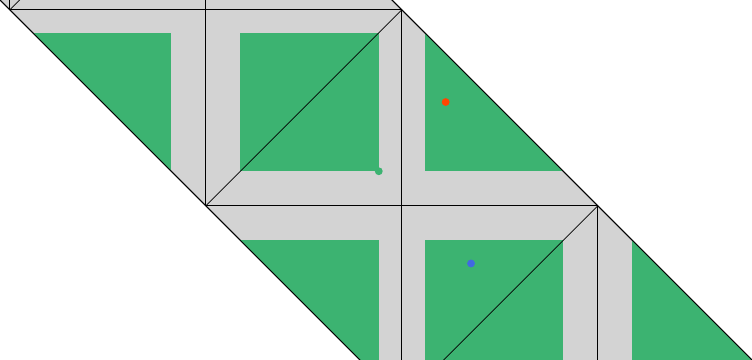}}%
    \put(0.873913,0.03043486){\makebox(0,0)[t]{\lineheight{1.25}\smash{\begin{tabular}[t]{c}$T^{-2}(D)$\end{tabular}}}}%
    \put(0.43913041,0.16956531){\makebox(0,0)[t]{\lineheight{1.25}\smash{\begin{tabular}[t]{c}$T^{-1}(D)$\end{tabular}}}}%
    \put(0.25217386,0.43478259){\makebox(0,0)[t]{\lineheight{1.25}\smash{\begin{tabular}[t]{c}$D$\end{tabular}}}}%
    \put(0.51713559,0.21994885){\makebox(0,0)[t]{\lineheight{1.25}\smash{\begin{tabular}[t]{c}\textcolor{\CSpine}{$a$}\end{tabular}}}}%
    \put(0.77352915,0.26125354){\makebox(0,0)[t]{\lineheight{1.25}\smash{\begin{tabular}[t]{c}$l_1$\end{tabular}}}}%
    \put(0.33825645,0.09652614){\makebox(0,0)[t]{\lineheight{1.25}\smash{\begin{tabular}[t]{c}$l_0$\end{tabular}}}}%
  \end{picture}%
\endgroup%

  \caption{
    The extended persistence diagram from \cref{fig:booklet}
    is admissible.
  }
  \label{fig:booklet_diagram_adm}
\end{figure}

\begin{definition}
  \label{dfn:admissibleMultiset}
  Let $C \subseteq \M$ be a convex subset
  containing $R$
  and let 
  $\mu \colon C \rightarrow \N_0$ be a finite multiset.
  We say that $\mu$ is \emph{admissible}
  if the following two conditions are satisfied:
  \begin{enumerate}
  \item
    The multiset $\mu$ contains exactly one point
    $a = (a_1, a_2) \in R$.
  \item
    All other points of $\mu$ are contained in $\mathbb{I}$
    as well as $([a_2, a_1] \cup \sigma [a_2, a_1])^2 + 2 \pi \Z^2$.
  \end{enumerate}
\end{definition}

In particular, any realizable persistence diagram
is an admissible multiset.
As we will see with \cref{lem:multisetLift} below, the converse
is true as well.
Now the notion of a multiset does not distinguish between individual instances
of the same element of the underlying set.
The following definition enables us to make this distinction.

\begin{figure}[t]
  \centering
\begingroup%
  \makeatletter%
  \providecommand\color[2][]{%
    \errmessage{(Inkscape) Color is used for the text in Inkscape, but the package 'color.sty' is not loaded}%
    \renewcommand\color[2][]{}%
  }%
  \providecommand\transparent[1]{%
    \errmessage{(Inkscape) Transparency is used (non-zero) for the text in Inkscape, but the package 'transparent.sty' is not loaded}%
    \renewcommand\transparent[1]{}%
  }%
  \providecommand\rotatebox[2]{#2}%
  \newcommand*\fsize{\dimexpr\f@size pt\relax}%
  \newcommand*\lineheight[1]{\fontsize{\fsize}{#1\fsize}\selectfont}%
  \ifx\svgwidth\undefined%
    \setlength{\unitlength}{357.32144165bp}%
    \ifx\svgscale\undefined%
      \relax%
    \else%
      \setlength{\unitlength}{\unitlength * \real{\svgscale}}%
    \fi%
  \else%
    \setlength{\unitlength}{\svgwidth}%
  \fi%
  \global\let\svgwidth\undefined%
  \global\let\svgscale\undefined%
  \makeatother%
  \begin{picture}(1,0.4827586)%
    \lineheight{1}%
    \setlength\tabcolsep{0pt}%
    \put(0,0){\includegraphics[width=\unitlength,page=1]{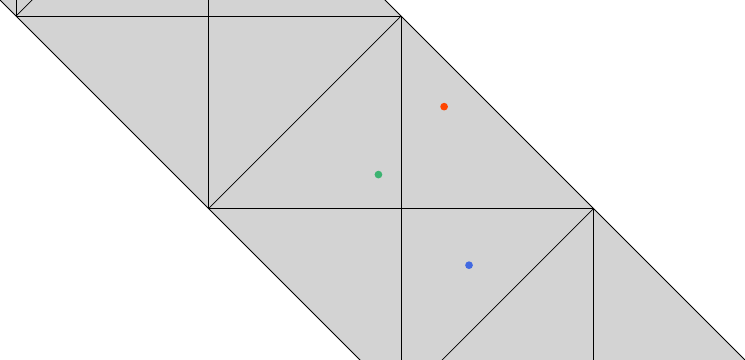}}%
    \put(0.73060351,0.02586201){\makebox(0,0)[t]{\lineheight{1.25}\smash{\begin{tabular}[t]{c}$T^{-2}(D)$\end{tabular}}}}%
    \put(0.4784482,0.15517247){\makebox(0,0)[t]{\lineheight{1.25}\smash{\begin{tabular}[t]{c}$T^{-1}(D)$\end{tabular}}}}%
    \put(0.30172413,0.43103438){\makebox(0,0)[t]{\lineheight{1.25}\smash{\begin{tabular}[t]{c}$D$\end{tabular}}}}%
    \put(0.59989852,0.09203856){\makebox(0,0)[t]{\lineheight{1.25}\smash{\begin{tabular}[t]{c}\textcolor{\CCyl}{$w(3)$}\end{tabular}}}}%
    \put(0.69142997,0.33303753){\makebox(0,0)[t]{\lineheight{1.25}\smash{\begin{tabular}[t]{c}\textcolor{\CHorn}{$w(1)=w(2)$}\end{tabular}}}}%
    \put(0.46526366,0.23960443){\makebox(0,0)[t]{\lineheight{1.25}\smash{\begin{tabular}[t]{c}\textcolor{\CSpine}{$w(0)$}\end{tabular}}}}%
    \put(0.77548138,0.25900125){\makebox(0,0)[t]{\lineheight{1.25}\smash{\begin{tabular}[t]{c}$l_1$\end{tabular}}}}%
    \put(0.3439611,0.09569409){\makebox(0,0)[t]{\lineheight{1.25}\smash{\begin{tabular}[t]{c}$l_0$\end{tabular}}}}%
  \end{picture}%
\endgroup%

  \caption{
    A multiset representation $w \colon \{0, 1, 2, 3\} \rightarrow \M$.
  }
  \label{fig:booklet_diagram_repr}
\end{figure}

\begin{definition}
  A \emph{multiset representation} is a map
  $w \colon S \rightarrow \M$ with finite domain $S$.
  Its associated multiset is
  \begin{equation*}
    \op{Dgm} (w) \colon \M \rightarrow \N_0,
    u \mapsto \# w^{-1} (u)
    .
  \end{equation*}
  We say $w \colon S \rightarrow \M$
  is \emph{admissible}
  if $\op{Dgm} (w)$ is.
  Moreover,
  if $\U \subseteq \M$ is an admissible upset
  containing the support of $\op{Dgm}(w)$
  and if $\mu \colon \U \rightarrow \N_0$ is a multiset
  with $\mu = \op{Dgm} (w) |_{\U}$,
  then we say that $w$
  is a \emph{representation of $\mu$}.
\end{definition}

\cref{fig:booklet_diagram_repr} shows a multiset representation
$w \colon \{0, 1, 2, 3\} \rightarrow \M$
of the extended persistence diagram shown in \cref{fig:booklet}.
For multiset representations $w_i \colon S_i \rightarrow \M$
with $i = 1, 2$ and a map $\zeta \colon S_1 \rightarrow S_2$,
the \emph{norm of $\zeta$} is
\begin{equation*}
  \lVert \varphi \rVert :=
  \sup_{s \in S_1} d(w_1(s), (w_2 \circ \zeta)(s))
  \in [0, \infty]
  .
\end{equation*}
Admissible multiset representations and maps of finite norm
form a category $\mathcal{A}$.
Such categories are also known as normed or weighted categories,
see \cite[Section 2.2]{2017arXiv170706288B}.

We now extend the isometric family of lifts
of points in $\M$ from \cref{lem:isoFam}
to admissible multisets,
by gluing lifts of the points contained.
When gluing these lifts, we have to make certain choices,
and we keep track of these choices using representations of multisets
and another category $\mathcal{F}$ of functions on simplicial pairs.
The category $\mathcal{F}$ is defined verbatim the same way
as the category $\mathcal{F}_0$ from \cref{sec:props},
except that the commutativity of the diagram in \eqref{eq:commHom}
is not required.
For a morphism ${\varphi \colon (X, A; f) \rightarrow (Y, B; g)}$
in $\mathcal{F}$ we define the \emph{norm of $\varphi$} as
\begin{equation*}
  \lVert \varphi \rVert :=
  \lVert f - g \circ \varphi \rVert_{\infty}
  \in [0, \infty)
  .  
\end{equation*}
This makes $\mathcal{F}_0$ the subcategory of $\mathcal{F}$
of all morphisms with vanishing norm.
The main goal of this subsection is to specify a norm-preserving functor
\begin{equation*}
  F \colon \mathcal{A} \rightarrow \mathcal{F}
\end{equation*}
such that for any admissible multiset representation
$w \colon S \rightarrow \M$
we have ${F(w) = (X, \emptyset; f)}$ for some function
$f \colon X \rightarrow \R$, and moreover,
\begin{equation*}
  \beta_0 (X) = 1
  \quad
  \text{and}
  \quad
  \op{Dgm} (f)
  =
  \op{Dgm} (w) |_{\op{int} \M}
  .
\end{equation*}

We now describe the construction of $F(w)$
for any admissible multiset representation $w \colon S \rightarrow \M$.
If $w$ is the multiset representation
shown in \cref{fig:booklet_diagram_repr},
then $F(w)$ will be the height function of the simplicial complex
shown in \cref{fig:booklet}.

We start by defining an auxiliary simplicial complex $A$,
which we may think of as a \emph{blank booklet} whose pages
are indexed or \enquote{numbered} by $S \setminus \{s_0\}$,
where $s_0$ is the single element of $w^{-1}(R)$.
More specifically, we define~$A$ as the mapping cylinder $M (\op{pr}_1)$
of the projection
\[
  \op{pr}_1 \colon [0, 1] \times (S \setminus \{s_0\}) \rightarrow [0, 1], ~
  (t, s) \mapsto t
  .
\]
In \cref{fig:booklet}, the complex $A$ corresponds to the subcomplex
shaded in gray.
With some abuse of notation, we view $[0, 1] \times (S \setminus \{s_0\})$
as the subspace of $A = M (\op{pr}_1)$
that is the top of the mapping cylinder.
Moreover, we refer to $[0, 1] \times (S \setminus \{s_0\})$
as the \emph{fore edges of $A$}.
Similarly, we view $[0, 1]$ as the subspace of $A$ that is the bottom
of the mapping cylinder and refer to it as \emph{the spine of $A$}.
In \cref{fig:booklet}, the spine $[0, 1]$
is shaded in green.

We now extend the booklet $A$ to construct the complex $X$, i.e., the domain of $F(w)$.
For each index ${s \in S \setminus \{s_0\}}$,
we glue the complex $X_{w(s)}$,
which is associated to the specified lift of $w(s)$,
to the fore edge ${[0, 1] \times \{s\}}$ of the booklet $A$
along the map
${j_{w(s)} \colon [0, 1] \times \{s\} \cong [0, 1] \longrightarrow X_{w(s)}}$.
This way we obtain the simplicial pair $(X, A)$.
For the example from \cref{fig:booklet_diagram_repr},
this amounts to gluing the blue simplicial cylinder
as well as the two red horns to the gray shaded booklet
in \cref{fig:booklet}.

We now define a simplicial retraction
$r \colon X \rightarrow A$ of $X$ onto $A$.
Again, we flip through the pages of $A$
and when we are at the page with index $s \in S \setminus \{s_0\}$,
we retract $X_{w(s)}$,
which we glued to this page in the construction of $X$,
to the fore edge ${[0, 1] \times \{s\}}$ via
${r_{w(s)} \colon X_{w(s)} \rightarrow [0, 1] \cong [0, 1] \times \{s\}}$
from \cref{sec:liftingPoints,sec:formal_lifts_points}.
In other words, the family of retractions
$\{r_{w(s)} \colon X_{w(s)} \rightarrow [0, 1] \mid s \in S \setminus \{s_0\}\}$
assembles a retraction $r \colon X \rightarrow A$.
In our example this amounts to orthogonally projecting
the blue simplicial cylinder and the two red horns in \cref{fig:booklet}
to the corresponding fore edge of the gray shaded booklet $A$.

Finally, we construct the function $f \colon X \rightarrow \R$.
We start with defining an affine function
$b \colon A \rightarrow \R$
by specifying its restrictions to the spine $[0, 1]$
and the fore edges ${[0, 1] \times (S \setminus \{s_0\})}$
of the booklet $A$.
The function $f \colon X \rightarrow \R$ is then defined as the composition
$f = b \circ r$.
The restriction of $b \colon A \rightarrow \R$ to the spine $[0, 1]$
of the booklet $A$ is given by
$f_{w(s_0)} \colon [0, 1] \rightarrow \R$
from \cref{sec:liftingPoints},
which is characterized by $\op{Dgm}(f_{w(s_0)}) = \mathbf{1}_{w(s_0)}$
and the assumptions of \cref{lem:dim}.
For each $s \in S \setminus \{s_0\}$
the restriction of $b \colon A \rightarrow \R$
to the fore edge ${[0, 1] \times \{s\}}$
is given by the function
${b_{w(s)} \colon [0, 1] \times \{s\} \cong [0, 1] \longrightarrow \R}$
from \cref{sec:liftingPoints} or rather \eqref{eq:bu}.
In the example from \cref{fig:booklet}
the function $b \colon A \rightarrow \R$
is the restriction of the height function
to the subcomplex shaded in gray.
Finally we set
\[f := b \circ r \quad \text{and} \quad
  F(w) := (X, \emptyset; f) .\]
Since the above constructions are functorial, this defines the desired functor
$F \colon \mathcal{A} \rightarrow \mathcal{F}$.
The norms of morphisms are preserved by construction.
It remains to show that $F$ has the desired property:
\begin{lem}
  \label{lem:multisetLift}
  \(\op{Dgm}(f) = \op{Dgm}(w) |_{\op{int} \M} =: \mu .\)
\end{lem}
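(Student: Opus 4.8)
The plan is to compute $\op{Dgm}(f) = \op{Dgm}(X, \emptyset; f)$ by exploiting the retraction $r \colon X \to A$ together with the booklet decomposition of the pair $(X, A)$. First I would invoke \cref{cor:retract}: since $f = b \circ r$ and $r|_A = \op{id}_A$, the map $r$ is a fiberwise retraction of $(X, A; f)$ onto $A$, so
\[
  \op{Dgm}(X, \emptyset; f) = \op{Dgm}(A, \emptyset; b) + \op{Dgm}(X, A; f).
\]
This reduces the problem to two separate computations: the persistence diagram of the blank booklet $b \colon A \to \R$, and the relative persistence diagram of the glued gadgets.

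For the first summand, $A$ is the mapping cylinder $M(\pr_1)$ of $\pr_1 \colon [0,1] \times (S \setminus \{s_0\}) \to [0,1]$, which deformation-retracts onto the spine $[0,1]$ in a fiberwise manner (the retraction is compatible with $b$ since $b$ restricted to the spine is $f_{w(s_0)}$ and $b$ on each fore edge is $b_{w(s)}$, but the cylinder collapse is along the mapping cylinder direction which respects the value $b$ only after noting that $b = b \circ (\text{retraction})$ fails in general — so more carefully, one uses \cref{lem:homotopyInv} together with the fact that the inclusion of the spine into $A$ is a fiberwise homotopy equivalence only if $b$ is constant along the collapse, which it is not). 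The cleaner route is: apply \cref{cor:retract} again to the pair $(A, \text{spine})$ — but the retraction onto the spine is \emph{not} fiberwise. Instead I would use \cref{lem:add} after observing that $A$ collapses to a wedge-like structure; the honest approach is to note $\op{Dgm}(A, \emptyset; b)$ should equal $\op{Dgm}(f_{w(s_0)}) = \mathbf{1}_{w(s_0)}|_{\op{int}\M}$ because the fore edges, being cones over points, contribute nothing to reduced homology and the whole booklet is fiberwise contractible onto the spine via the mapping cylinder structure — this last claim does hold fiberwise because $\pr_1$ commutes with the height functions by design ($b \circ \pr_1$-compatibility is exactly how $A$ was built). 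So $\op{Dgm}(A, \emptyset; b) = \mathbf{1}_{w(s_0)}|_{\op{int}\M}$.

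For the second summand, I would use \cref{cor:exc} (excision) to strip away the booklet: writing $X = A \cup \bigcup_s X_{w(s)}$ where each $X_{w(s)}$ meets $A$ exactly along the fore edge $[0,1] \times \{s\} \cong A_{w(s)}$, excision gives
\[
  h(X, A; f) \cong \bigoplus_{s \in S \setminus \{s_0\}} h\!\left(X_{w(s)}, A_{w(s)}; f_{w(s)}\right),
\]
using \cref{lem:add} for the disjoint union of the gadgets (they are disjoint away from $A$) and the fact that $f$ restricted to $X_{w(s)}$ is $f_{w(s)} = b_{w(s)} \circ r_{w(s)}$ by construction. Since each $(X_{w(s)}, A_{w(s)}; f_{w(s)})$ is a lift of $w(s)$, we have $\op{Dgm}(X_{w(s)}, A_{w(s)}; f_{w(s)}) = \mathbf{1}_{w(s)}|_{\op{int}\M}$. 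Summing over $s$ and adding the spine contribution yields $\op{Dgm}(f) = \sum_{s \in S} \mathbf{1}_{w(s)}|_{\op{int}\M} = \op{Dgm}(w)|_{\op{int}\M} = \mu$.

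The main obstacle I anticipate is making the excision/additivity step fully rigorous: \cref{cor:exc} is stated for a decomposition $X = A \cup B$ into \emph{two} subcomplexes, so I would need to either iterate it over the pages $s \in S \setminus \{s_0\}$ (peeling off one gadget at a time, which requires checking at each stage that the remaining union is still a subcomplex and the intersection is still the fore edge) or package all the gadgets into a single subcomplex $B = \bigcup_s X_{w(s)}$ and verify $A \cap B = \bigcup_s ([0,1]\times\{s\})$, then combine excision with \cref{lem:add}. A secondary subtlety is confirming that the connectivity claim $\beta_0(X) = 1$ holds — this follows because $A$ is connected (the booklet is built on a connected spine) and each gadget $X_{w(s)}$ is glued along a connected fore edge, and each $X_{w(s)}$ is itself connected by the constructions in \cref{sec:liftingPoints}; but since the lemma statement only asks for the diagram equality, I would relegate the $\beta_0$ check to the surrounding text.
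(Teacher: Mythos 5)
Your overall route is the same as the paper's: \cref{cor:retract} to split $\op{Dgm}(f)$ into the booklet part and the relative part, fiberwise homotopy invariance (\cref{lem:homotopyInv}) to identify $\op{Dgm}(b)$ with $\mathbf{1}_{w(s_0)}$, and excision (\cref{cor:exc}) combined with additivity (\cref{lem:add}) to identify $h(X,A;f)$ with $\bigoplus_{s \in S \setminus \{s_0\}} h(X_{w(s)}, A_{w(s)}; f_{w(s)})$; your remark about packaging all gadgets into one subcomplex $B$ with $A \cap B$ the union of the fore edges is exactly how the paper combines these two statements, so that part is fine.

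The one step where your justification is wrong is the spine computation. The parenthetical claim that the collapse of $A = M(\pr_1)$ onto the spine is fiberwise \enquote{by design, because $\pr_1$ commutes with the height functions} does not hold: on the fore edge with index $s$ the function $b$ equals $b_{w(s)}$, which in general differs from $f_{w(s_0)} \circ \pr_1$, so the cylinder collapse does not preserve the values of $b$ and is not a fiberwise homotopy in the sense required by \cref{lem:homotopyInv}. What actually saves the step is admissibility of $\op{Dgm}(w)$ (\cref{sec:admissible}): every point $u$ of the diagram other than $a = w(s_0)$ lies in $([a_2,a_1] \cup \sigma[a_2,a_1])^2 + 2\pi\Z^2$, and since $\op{fauxtan}$ is $2\pi\Z\langle\sigma\rangle$-periodic this forces $b_{w(s)}([0,1]) \subseteq [\tan a_2, \tan a_1] = f_{w(s_0)}([0,1])$ for every $s$. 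Consequently each point of $A$ can be moved within its level set of $b$ to the unique point of the spine at the same level (if $a_1 = a_2$ everything is constant and the claim is trivial), which gives a level-preserving deformation retraction onto the spine; only then does \cref{lem:homotopyInv} yield $h(b) \cong h(f_{w(s_0)})$, and hence $\op{Dgm}(b) = \mathbf{1}_{w(s_0)}$ by \cref{lem:dim}. Without admissibility this step genuinely fails (a fore edge whose values leave the range of the spine function creates additional interlevel set homology in the booklet), so the appeal to the mapping cylinder structure alone cannot be correct; you should make the dependence on admissibility explicit here.
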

\begin{proof}
By \cref{cor:retract}, we have
\[\op{Dgm}(f) =
  \op{Dgm}(b) + \op{Dgm}(X, A; f).\]
By homotopy invariance (\cref{lem:homotopyInv}), we have
\begin{equation*}
  h(b) \cong h\left(f_{w(s_0)}\right)
  \quad
  \text{and thus}
  \quad
  \op{Dgm}(b) = \mathbf{1}_{w(s_0)}
  .
\end{equation*}
Moreover, by excision (\cref{cor:exc})
and additivity (\cref{lem:add}), we have
\begin{equation*}
  h(X, A; f) \cong
  \bigoplus_{s \in S \setminus \{s_0\}}
  h\left(X_{w(s)}, A_{w(s)}; f_{w(s)}\right)
\end{equation*}
and thus
\begin{align*}
  \op{Dgm}(X, A; f)
  & =
    \sum_{s \in S \setminus \{s_0\}}
    \op{Dgm}\left(X_{w(s)}, A_{w(s)}; f_{w(s)}\right)
  \\
  & =
    \sum_{s \in S \setminus \{s_0\}}
    \mathbf{1}_{w(s)} |_{\op{int} \M}
  \\
  & =
    \mu - \mathbf{1}_{w(s_0)}
    .
\end{align*}
Altogether, we obtain
$\op{Dgm}(f) = \op{Dgm}(b) + \op{Dgm}(X, A; f) =
\mathbf{1}_{w(s_0)} + \mu - \mathbf{1}_{w(s_0)} = \mu$.
\end{proof}

\subsection{Universality of the Bottleneck Distance}

We are now equipped with the machinery to realize arbitrary matchings of persistence diagrams, proving \cref{thm:suffCond}.
We start with the following auxiliary result.

\begin{lem}[Matchings with Admissible Projections]
  \label{lem:matching}
  Let
  $\U \subseteq \M$ be an admissible upset and let
  $\mu, \nu \colon \op{int} \U \rightarrow \N_0$
  be two realizable persistence diagrams.
  Suppose ${M \colon \U \times \U \rightarrow \N_0}$
  is a matching between $\mu$ and $\nu$
  of finite norm ${\lVert M \rVert < \infty}$.
  Then there is a matching
  ${N \colon \U \times \U \rightarrow \N_0}$
  (between $\mu$ and $\nu$) such that
  ${\lVert N \rVert \leq \lVert M \rVert}$
  and both
  $\op{pr}_1(N)$ and $\op{pr}_2(N)$
  are admissible multisets.
\end{lem}

\begin{proof}
  Suppose $\op{pr}_1(M) \colon \U \rightarrow \N_0$
  is not admissible.
  By symmetry it suffices to show that there is a matching
  $N \colon \U \times \U \rightarrow \N_0$
  between $\mu$ and $\nu$
  with $\op{pr}_1(N) \colon \U \rightarrow \N_0$ admissible,
  $\lVert N \rVert \leq \lVert M \rVert$,
  and
  $\op{pr}_2(N) = \op{pr}_2(M)$.
  Now in order to construct $N$ from $M$
  we consider the \enquote{elements} of $\op{pr}_1(M)$
  violating the second condition from \cref{dfn:admissibleMultiset}
  and we replace the corresponding edges in $M$ one by one.
  Suppose that
  $u \in (\op{pr}_1(M))^{-1} (\N \setminus \{0\})$ is a vertex violating
  the second condition from \cref{dfn:admissibleMultiset}
  and let
  $a = (a_1, a_2) \in R$ be the unique vertex with $\mu(a) = 1$.
  Then we have $u \in \partial \U$ necessarily
  and there is a vertex $v \in \op{int} \U$
  with $M(u, v) \geq 1$.
  We assume that $u \in l_0$,
  the other two cases
  $u \in l_1$ and $u \in \partial \U \setminus \partial \M$
  are similar.
  Now let $\nabla \subset \M$ be the triangular region of all points in
  ${([a_2, a_1] \cup \sigma [a_2, a_1])^2 + 2 \pi \Z^2}$ and $\M$
  that are of finite distance to $u$ (as well as $v$).
  We have to show that
  there is a point $w \in \nabla \cap \partial \U = \nabla \cap l_0$
  such that ${d(w, v) \leq \lVert M \rVert}$,
  since then we may set
  \[N := M - \mathbf{1}_{(u, v)} + \mathbf{1}_{(w, v)}\]
  to obtain the matching $N \colon \U \times \U \rightarrow \N_0$ with
  ${\lVert N \rVert \leq \lVert M \rVert}$ and
  ${\op{pr}_1(N)(u) = \op{pr}_1(M)(u) - 1}$
  and we may continue by induction.
  Thus, we are done in case we have
  ${d(v, \nabla \cap \partial \U) \leq d(v, u) \leq \lVert M \rVert}$.
  Now suppose we have ${d(v, \nabla \cap \partial \U) \geq d(v, u)}$.
  Without loss of generality
  we assume that $u$ is an upper bound for any of the points in $\nabla$.
  \begin{figure}[t]
    \centering
\begingroup%
  \makeatletter%
  \providecommand\color[2][]{%
    \errmessage{(Inkscape) Color is used for the text in Inkscape, but the package 'color.sty' is not loaded}%
    \renewcommand\color[2][]{}%
  }%
  \providecommand\transparent[1]{%
    \errmessage{(Inkscape) Transparency is used (non-zero) for the text in Inkscape, but the package 'transparent.sty' is not loaded}%
    \renewcommand\transparent[1]{}%
  }%
  \providecommand\rotatebox[2]{#2}%
  \newcommand*\fsize{\dimexpr\f@size pt\relax}%
  \newcommand*\lineheight[1]{\fontsize{\fsize}{#1\fsize}\selectfont}%
  \ifx\svgwidth\undefined%
    \setlength{\unitlength}{246.60326385bp}%
    \ifx\svgscale\undefined%
      \relax%
    \else%
      \setlength{\unitlength}{\unitlength * \real{\svgscale}}%
    \fi%
  \else%
    \setlength{\unitlength}{\svgwidth}%
  \fi%
  \global\let\svgwidth\undefined%
  \global\let\svgscale\undefined%
  \makeatother%
  \begin{picture}(1,0.83636363)%
    \lineheight{1}%
    \setlength\tabcolsep{0pt}%
    \put(0,0){\includegraphics[width=\unitlength,page=1]{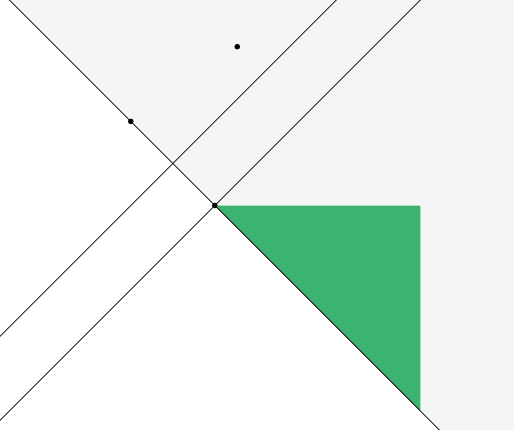}}%
    \put(0.23832507,0.58377961){\makebox(0,0)[rt]{\lineheight{1.25}\smash{\begin{tabular}[t]{r}\(u\)\end{tabular}}}}%
    \put(0.4455978,0.74545454){\makebox(0,0)[rt]{\lineheight{1.25}\smash{\begin{tabular}[t]{r}\(v\)\end{tabular}}}}%
    \put(0.39790634,0.42825343){\makebox(0,0)[rt]{\lineheight{1.25}\smash{\begin{tabular}[t]{r}\(w\)\end{tabular}}}}%
    \put(0.56755922,0.22615977){\makebox(0,0)[lt]{\lineheight{1.25}\smash{\begin{tabular}[t]{l}\(l_0\)\end{tabular}}}}%
    \put(0.74924517,0.33498621){\makebox(0,0)[rt]{\lineheight{1.25}\smash{\begin{tabular}[t]{r}{\fontsize{13.0pt}{13.0pt}\selectfont \(\nabla\)}\end{tabular}}}}%
  \end{picture}%
\endgroup%

    \caption{
      The triangular region
      $\nabla \subset ([a_2, a_1] \cup \sigma [a_2, a_1])^2 + 2 \pi \Z^2$
      is shaded in green.
      This figure is drawn in tangential coordinates,
      so that the bisector between $u$ and $\nabla \cap \partial \U$
      with respect to $d \colon \M \times \M \rightarrow [0, \infty]$
      appears as a straight line.
    }
    \label{fig:auxLemma}
  \end{figure}
  In \cref{fig:auxLemma} we show the bisector
  of $u$ and $\nabla \cap \partial \U = \nabla \cap l_0$
  with respect to $d \colon \M \times \M \rightarrow [0, \infty]$
  in tangential coordinates.
  Now let $w$ be the upper left vertex of $\nabla$.
  As $v$ is on or to the upper left of the bisecting line
  between $u$ and $\nabla \cap \partial \U$
  it is also to the upper left of the line through $w$
  that is parallel to the bisector
  and hence
  \begin{equation}
    \label{eq:closestPoint}
    d(v, w) = d(v, \nabla).
  \end{equation}
  Now let $a'$ be the unique point in $R$ with $\nu(a') = 1$.
  Then we have $d(a', a) \geq d(v, \nabla)$
  as well as $M(a, a') = 1$.
  In conjunction with \eqref{eq:closestPoint}
  we obtain
  \begin{equation*}
    d(v, w) = d(v, \nabla) \leq d(a', a) \leq \lVert M \rVert
    .
    \qedhere
  \end{equation*}
\end{proof}

\thmSuffCond*

\begin{proof}
  As $d_B (\mu, \nu) < \infty$,
  there is a matching between $\mu$ and $\nu$,
  and as these multisets are finite,
  the infimum
  \[d_B (\mu, \nu) =
    \inf
    \{ \lVert M \rVert \mid \text{$M$ is a matching between $\mu$ and $\nu$}\}
  \]
  is attained by some matching $M$.
  Moreover,
  we may assume that $\op{pr}_1 (M)$ and $\op{pr}_2 (M)$ are admissible
  by the previous \Cref{lem:matching}.

  We choose representations
  $w_i \colon S_i \rightarrow \M$
  of $\op{pr}_i (M)$ for $i = 1, 2$,
  noting that both $S_i$ are in bijection with the pairs in the multiset $M$. 
  We choose a bijection
  $\zeta \colon S_1 \rightarrow S_2$
  with $\lVert \zeta \rVert = \lVert M \rVert$.
  Now let $(X, \emptyset; f) := F(w_1)$, and let
  $g'$ be the function of $F(w_2)$.
  Since $F$ is a functor, $F(\zeta)$ is a homeomorphism,
  and so the persistence diagrams of $g'$ and
  $g := g' \circ F(\zeta)$ are identical.
  In summary, we obtain
  \begin{multline*}
    \op{Dgm} (f) |_{\op{int} \U} = \mu,
    ~
    \op{Dgm} (g) |_{\op{int} \U} = \nu,
    \quad \text{and}
    \\
    \lVert f - g \rVert_{\infty}
    =
    \lVert F(\zeta) \rVert
    =
    \lVert \zeta \rVert
    =
    \lVert M \rVert
    =
    d_B (\mu, \nu)
    .
    \qedhere
  \end{multline*}
\end{proof}

\begin{cor}
  \label{cor:bottleneckGeodesic}
  Let $\U \subseteq \M$ be an admissible upset
  and consider the set of realizable persistence diagrams
  $\mu \colon \op{int} \U \rightarrow \N_0$
  together with the bottleneck distance $d_B$
  as an extended metric space.
  This is then a geodesic extended metric space.
\end{cor}

\begin{proof}
  Let
  $\mu, \nu \colon \op{int} \U \rightarrow \N_0$
  be two realizable persistence diagrams
  with $d_B (\mu, \nu) < \infty$.
  Then
  there exists a finite simplicial complex $X$ and piecewise linear functions
  $f, g \colon X \rightarrow \R$ with
  \begin{equation*}
    \op{Dgm}(f) |_{\op{int} \U} = \mu,
    \quad
    \op{Dgm}(g) |_{\op{int} \U} = \nu,
    ~
    \text{and}
    \quad
    \lVert f-g \rVert_{\infty}
    =
    d_B (\mu, \nu)
  \end{equation*}
  by \cref{thm:suffCond}.
  Thus, a geodesic from $\mu$ to $\nu$ can be constructed by considering the convex combinations
  \begin{equation*}
    \gamma \colon [0, 1] \rightarrow \N_0^{\op{int} \U},\,
    t \mapsto \op{Dgm}((1 - t) f + t g) |_{\op{int} \U},
  \end{equation*}
  proving the claim.
\end{proof}

\section{Contrasting Bottleneck and Interleaving Distances}
\label{sec:contrastInterleavingDist}

For a PL function $f \colon X \rightarrow \R$
on a finite simplicial complex $X$
the \emph{derived levelset persistence} of $f$
as introduced by \cite{MR3259939}
is fully determined by the extended persistence diagram $\op{Dgm}(f)$
(up to isomorphism) and vice versa,
see for example
\mbox{\cite[Theorem 1.5, Proposition 3.34]{2022arXiv220515275B}},
and \cite[Section 3.2.2]{2021arXiv210809298B}.
Moreover,
the derived interleaving distance
by \cite{MR3873181,MR3259939}
and the bottleneck distance
coincide in this setting
by \cite{MR4355732}.

In the following we show that the situation is more subtle
when we restrict to an admissible upset ${\U \subset \M}$.
We illustrate this for the choice of $\U$ for which $\op{Dgm}(f) |_{\op{int} \U}$
contains the same information
as the level set barcode of $f \colon X \rightarrow \R$
in degree $0$; explicitly, this is the upset of $T^{-1}(D)$ (see also \cite[Section 3.2.1]{2021arXiv210809298B}).
Moreover, the level set barcode fully classifies
the pushforward $f_* K_X$
of the sheaf $K_X$ of locally constant $K$-valued functions on $X$
by \cite[Section 15.3]{MR3259939} and \cite{MR332887}.
(Other sources include \cite[Corollary 7.3]{2016arXiv160307876G},
\cite[Proposition 1.16]{MR3873181}, as well as
\cite[Section 1.1]{2022arXiv220515275B} and
\cite[Section 3.2.1]{2021arXiv210809298B}.)
One might therefore be tempted to expect
that the interleaving distance of sheaves on the reals
in the sense of \mbox{\cite[Definition 15.2.3]{MR3259939}}
shares certain properties with the bottleneck distance.
However,
in contrast to \cref{cor:bottleneckGeodesic}
the interleaving distance of sheaves is not intrinsic,
let alone geodesic.

Before we prove this, we fix some notation
which we adopt in part from \cite{MR3413628}.
For a topological space $X$ let $\mathrm{lcConst}(X, K)$
be the $K$-vector space of locally constant functions
on $X$ with values in $K$.
Then we obtain a contravariant functor
\begin{equation*}
  \mathrm{lcConst}(-, K)
  \colon
  X \mapsto \mathrm{lcConst}(X, K)
\end{equation*}
from the category of topological spaces
to the category of vector spaces over $K$.
If $X$ is a locally connected space,
then we write $K_X$ for the restriction
of $\mathrm{lcConst}(-, K)$
to the subcategory of open subsets of $X$
and inclusions.
This contravariant functor $K_X$
is also known as the sheaf of locally constant functions on $X$
with values in $K$.
Moreover,
we denote the poset of open intervals on the real numbers $\R$
by $\mathcal{I}$.
Suppose $f \colon X \rightarrow \R$
is a continuous function
on a locally connected space $X$.
Then we have the monotone map
\begin{equation*}
  f^{-1} \colon I \mapsto f^{-1}(I)
\end{equation*}
sending $I \in \mathcal{I}$
to the open subset $f^{-1}(I) \subseteq X$.
With this we may define the pushforward
as the composition
\begin{equation*}
  f_* K_X := \mathrm{lcConst}(-, K) \circ f^{-1}
  \colon
  \mathcal{I}^{\circ} \rightarrow \mathrm{Vect}_K,\,
  I \mapsto \mathrm{lcConst}(f^{-1}(I), K)
  ,
\end{equation*}
where $\mathcal{I}^{\circ}$ denotes the poset of open intervals
endowed with the superset-order $\supseteq$.
As $\mathcal{I}$ is closed under intersections,
the usual sheaf condition
makes sense for presheaves on  $\mathcal{I}$.
Moreover, as $\mathcal{I}$ is a basis
for the Euclidean topology on $\R$,
we will henceforth refer to (co)sheaves on $\mathcal{I}$
as (co)sheaves on $\R$.
Furthermore,
as $f^{-1} \colon I \mapsto f^{-1}(I)$
preserves unions and intersections,
the presheaf
$f_* K_X \colon \mathcal{I} \rightarrow \mathrm{Vect}_K$
is a sheaf on the reals $\R$.
Completely analogously,
if $X$ is locally path-connected,
then the restriction of $\pi_0$
to the subcategory of open subsets of $X$
is a cosheaf on $X$.
Precomposing this cosheaf by
$f^{-1} \colon I \mapsto f^{-1}(I)$
we obtain the Reeb cosheaf
\begin{equation*}
  \mathrm{RcS}(f) := \pi_0 \circ f^{-1} \colon
  \mathcal{I} \rightarrow \mathrm{Set}
\end{equation*}
as defined by
\cite{MR3505333}.

\begin{lem}
  \label{lem:ReebToSheaf}
  If $f \colon X \rightarrow \R$
  is a continuous function
  with $X$ locally path-connected,
  then there is a sheaf isomorphism
  \begin{equation*}
    \eta^f \colon
    f_* K_X \longrightarrow
    \mathrm{Map}(-, K) \circ \mathrm{RcS}(f)
    .
  \end{equation*}
\end{lem}

\begin{proof}
  Suppose
  $a \colon X \rightarrow K$
  is a locally constant function on $X$.
  By the universal property of the quotient set
  there is a unique \enquote{extension}
  of $a \colon X \rightarrow K$
  along the projection to $\pi_0 (X)$:
  \begin{equation*}
    \begin{tikzcd}
      X
      \arrow[r, "a"]
      \arrow[d, two heads]
      &
      K
      .
      \\
      \pi_0 (X)
      \arrow[ur, "\bar{a}"']
    \end{tikzcd}
  \end{equation*}
  This yields a natural transformation
  \begin{equation*}
    \eta \colon
    \mathrm{lcConst}(-, K)
    \Longrightarrow
    \mathrm{Map}(-, K) \circ \pi_0
    ,
  \end{equation*}
  which restricts to a natural isomorphism
  on locally path-connected spaces.
  By whiskering $\eta$ from the right with
  $f^{-1} \colon I \mapsto f^{-1}(I)$,
  we obtain a natural transformation
  \begin{equation*}
    \eta^f := \eta \circ f^{-1} \colon
    f_* K_X \Longrightarrow
    \mathrm{Map}(-, K) \circ \mathrm{RcS}(f)
    ,
  \end{equation*}
  which is a sheaf isomorphism
  if $X$ is locally path-connected.  
\end{proof}

For $\delta \geq 0$ we define the order-preserving map
\[\Omega_\delta \colon \mathcal{I} \rightarrow \mathcal{I},\,
  (a, b) \mapsto (a-\delta, a+\delta).\]
This definition by itself defines an order-preserving map
\begin{equation*}
  [0, \infty) \rightarrow \mathrm{Map}(\mathcal{I}, \mathcal{I}),\,
  \delta \mapsto \Omega_{\delta}
  .
\end{equation*}
Now considering $\mathcal{I}$ as a poset category,
we may consider the relation
${\Omega_0 = \op{id} \leq \Omega_{\delta}}$
as a natural transformation,
which we denote by
${\Omega_{0 \leq \delta} \colon \Omega_0 \rightarrow \Omega_{\delta}}$.

\begin{definition}[$\delta$-Interleaving]
  Let $\delta \geq 0$ and let $F$ and $G$ be functors
  from $\mathcal{I}$
  to some category $\mathcal{C}$.
  Then a $\delta$-interleaving of $F$ and $G$
  is a pair of natural transformations
  \[\varphi \colon F \rightarrow G \circ \Omega_\delta
    \quad \text{and} \quad
    \psi \colon G \rightarrow F \circ \Omega_\delta
  \]
  such that both triangles in the diagram
  \begin{equation}
    \label{eq:interleaving}
    \begin{tikzcd}[row sep=large, column sep=10ex]
      F
      \arrow[dd, bend right=55,
      "F \circ \Omega_{0 \leq 2\delta}"']
      &
      G
      \arrow[dl, "\psi"' near start]
      \arrow[dd, bend left=55,
      "G \circ \Omega_{0 \leq 2 \delta}"]
      \\
      F \circ \Omega_\delta
      &
      G \circ \Omega_\delta
      \arrow[from=ul, crossing over, "\varphi" near start]
      \arrow[dl, "\psi \circ \Omega_\delta"' near end]
      \\
      F \circ \Omega_{2 \delta}
      &
      G \circ \Omega_{2 \delta}
      \arrow[from=ul, crossing over, "\varphi \circ \Omega_\delta" near end]
    \end{tikzcd}    
  \end{equation}
  commute.
\end{definition}

As presheaves could be described as functors taking values
in an opposite category,
the previous definition can be applied to presheaves as well.
However, as a matter of convention we prefer to think of presheaves
as functors on $\mathcal{I}^{\circ}$.
Transcribing the previous definition into this setting
a $\delta$-interleaving of presheaves $F$ and $G$
is a pair of presheaf homomorphisms
\[\varphi \colon G \circ \Omega_\delta \rightarrow F
  \quad \text{and} \quad
  \psi \colon F \circ \Omega_\delta \rightarrow G
\]
such that both triangles in the diagram
\begin{equation}
  \label{eq:interleavingPresheaves}
  \begin{tikzcd}[row sep=large, column sep=10ex]
    F \circ \Omega_{2 \delta}
    \arrow[dd, bend right=55,
    "F \circ \Omega_{0 \leq 2\delta}"']
    &
    G \circ \Omega_{2 \delta}
    \arrow[dl, "\varphi \circ \Omega_\delta"' near end]
    \arrow[dd, bend left=55,
    "G \circ \Omega_{0 \leq 2 \delta}"]
    \\
    F \circ \Omega_\delta
    &
    G \circ \Omega_\delta
    \arrow[from=ul, crossing over, "\psi \circ \Omega_\delta" near end]
    \arrow[dl, "\varphi"' near end]
    \\
    F
    &
    G
    \arrow[from=ul, crossing over, "\psi" near end]
  \end{tikzcd}    
\end{equation}
commute.

\begin{lem}
  \label{lem:ReebToSheavesInterleaving}
  Suppose
  $f \colon X \rightarrow \R$
  and
  $g \colon Y \rightarrow \R$
  are continuous functions
  with $X$ and $Y$ locally path-connected.
  If
  $\mathrm{RcS}(f)$ and $\mathrm{RcS}(g)$
  are $\delta$-interleaved,
  then
  $f_* K_X$ and $g_* K_Y$
  are $\delta$-interleaved as well.
\end{lem}

\begin{proof}
  By \cref{lem:ReebToSheaf}
  it suffices to show that
  ${\mathrm{Map}(-, K) \circ \mathrm{RcS}(f)}$
  and
  ${\mathrm{Map}(-, K) \circ \mathrm{RcS}(g)}$
  are $\delta$-interleaved.
  This in turn follows from
  \mbox{\cite[Proposition 2.2.11]{MR3413628}}.  
\end{proof}

Now let $F$ be a sheaf on the reals
and let $I = (a, b) \in \mathcal{I}$ be an open interval.
Then we have the open cover $(-\infty, b) \cup (a, \infty) = \R$.
Using the same notation as in \cref{sec:sectIntersect},
there is a naturally induced map
\begin{equation*}
  \nabla_{(-\infty, b), (a, \infty)} (F) \colon
  F((-\infty, b)) \oplus F((a, \infty)) \rightarrow F(I)
\end{equation*}
with cokernel
${\kappa_I (F) := \op{coker} \big(\nabla_{(-\infty, b), (a, \infty)} (F)\big)}$,
which is natural in $F$.
Now suppose we have ${\delta \geq 0}$,
then we may apply the functor
${\kappa_I = \op{coker} \big(\nabla_{(-\infty, b), (a, \infty)} (-)\big)}$
to the sheaf homomorphism
\begin{equation*}
  F \circ \Omega_{0 \leq \delta} \colon F \circ \Omega_{\delta} \rightarrow F
\end{equation*}
to obtain the map
\begin{equation}
  \label{eq:cokerCoverSmoothing}
  \kappa_I (F \circ \Omega_{0 \leq \delta}) \colon
  \kappa_I (F \circ \Omega_{\delta}) \rightarrow
  \kappa_I (F)
  .
\end{equation}

\begin{lem}
  \label{lem:cokerCoverSmoothing}
  The map ${\kappa_I (F \circ \Omega_{0 \leq \delta})}$
  from \eqref{eq:cokerCoverSmoothing} is injective.
\end{lem}

\begin{proof}
  We have
  \[\kappa_I (F \circ \Omega_{\delta})
    =
    \op{coker} \big(
    \nabla_{(-\infty, b), (a, \infty)} (F \circ \Omega_{\delta})
    \big)
    =
    \op{coker} \big(
    \nabla_{(-\infty, b+\delta), (a-\delta, \infty)} (F \circ \Omega_{\delta})
    \big)
    ,
  \]
  hence the result follows from \cref{lem:cokerSuperCover}.
\end{proof}

\begin{cor}
  \label{cor:cokerCoverInterleaving}
  Let $\delta \geq 0$, let $F$ and $G$ be sheaves on the real numbers,
  and let
  \[\varphi \colon G \circ \Omega_\delta \rightarrow F
    \quad \text{and} \quad
    \psi \colon F \circ \Omega_\delta \rightarrow G
  \]
  be a $\delta$-interleaving of $F$ and $G$.
  Moreover,
  let $I \in \mathcal{I}$ be an open interval.
  Then the map
  \begin{equation*}
    \kappa_I (\psi \circ \Omega_{\delta}) \colon
    \kappa_I (F \circ \Omega_{2 \delta}) \rightarrow
    \kappa_I (G \circ \Omega_{\delta})
  \end{equation*}
  is injective.
\end{cor}

\begin{proof}
  This follows in conjunction with the commutativity
  of the left triangle in \eqref{eq:interleaving}.
\end{proof}

Now we harness the example by \cite[Proposition 3.6]{MR4323622}
as well as \cref{cor:cokerCoverInterleaving}
to show that the interleaving distance of sheaves is not intrinsic.
Even more, it differs from the induced intrinsic version of the interleaving distance by a factor of $2$ in this example.
To this end,
let
\[C := \{(x, y, z) \in \R^3 \mid x^2 + z^2 = 1, |2y - x| \leq 1\}\]
and let
\begin{equation}
\label{eq:exampleReeb}
  f \colon C \rightarrow \R, (x, y, z) \mapsto x
  \quad \text{and} \quad
  g \colon C \rightarrow \R, (x, y, z) \mapsto y
\end{equation}
be the projections to the $x$- and the $y$-axis respectively,
see also \cref{fig:skewedCylinder}.
\begin{figure}[t]
  \centering
  \includegraphics{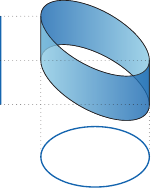}
  \caption{
    The skewed cylinder $C$ by \cite[Proposition 3.6]{MR4323622}.
    The functions $f, g \colon C \rightarrow \R$
    are the projections to the horizontal and vertical axes, respectively.
  }
  \label{fig:skewedCylinder}
\end{figure}
We consider the two pushforwards
$f_* K_C$ and $g_* K_C$
of the sheaf of locally constant $K$-valued functions $K_C$ on $C$.
By \cite[Proposition 3.9]{MR4323622}
the interleaving distance of the Reeb graphs
associated to $f$ and $g$ is $\frac12$.
Moreover,
the interleaving distance of these Reeb graphs
is the same as the interleaving distance of the Reeb cosheaves
associated to $f$ and $g$
by \cite{MR3505333}.
Thus,
the interleaving distance of $f_* K_C$ and $g_* K_C$
is at most $\frac12$ by \cref{lem:ReebToSheavesInterleaving}.
Since ${|f(p) - g(p)| \leq 1}$ for all ${p \in C}$,
the induced intrinsic interleaving distance
of $f_* K_C$ and $g_* K_C$ is at most $1$.
In the following we show that the induced intrinsic interleaving distance
is at least $1$ and hence equal to $1$.
Thus,
the interleaving distance of sheaves on the reals
and the induced intrinsic interleaving distance
differ globally by at least a factor of $2$.

\begin{prp}
  \label{prp:intrinsicDistSheaves}
  The induced intrinsic interleaving distance
  of $f_* K_C$ and $g_* K_C$ is $1$.
\end{prp}

\begin{proof}
  Assume for a contradiction that the induced intrinsic interleaving distance
  of $f_* K_C$ and $g_* K_C$ is less than $1$.
  Then there exists an $\varepsilon > 0$ such that there is a 1-parameter family of sheaves
  \[\{\gamma(t) \mid 0 \leq t \leq 1 - 2\varepsilon\}\]
  with
  $\gamma(t)$ and $\gamma(t')$ being ${|t - t'|}$-interleaved
  for all ${t, t' \in [0, 1 - 2 \varepsilon]}$
  and
  ${\gamma(0) = f_* K_C}$ and ${\gamma(1 - 2 \varepsilon) = g_* K_C}$.
  By the monotonicity of the interleaving property, we may assume without loss of generality  that \[\varepsilon = \frac{1}{2^{N}+1}\] for some $N \in \N$.
    We consider the values of $\gamma$
  at a finite sequence of consecutive times $t_0 , \dots, t_N$, where
    \[t_n := (1-\varepsilon)\sum_{i=1}^n \frac1{2^n} = (1-\varepsilon)\left(
    1-\frac1{2^n}\right)\]
  is chosen such that 
  $\delta_n := t_n - t_{n-1} = \frac{1-\varepsilon}{2^n}=2^{N-n}\varepsilon$
  and
  $t_{N} = 1 - 2 \varepsilon$.
  For any two consecutive times
  $t_{n-1}$ and $t_n$ with $n = 1, \dots, N$,
  choose a \mbox{$\delta_n$-interleaving} of
  $\gamma(t_n)$ and $\gamma(t_{n-1})$
  with interleaving homomorphisms
  \[
    \varphi_n  \colon
                \gamma(t_n) \circ \Omega_{\delta_n} \rightarrow
                \gamma(t_{n-1})
    \quad
    \text{and} \quad
    \psi_n  \colon
             \gamma(t_{n-1}) \circ \Omega_{\delta_n} \rightarrow
             \gamma(t_n)
             .
  \]
  
  We illustrate this for the example $N=3$.
  By gluing the left triangles of the corresponding interleaving diagrams
  \eqref{eq:interleaving},
  we obtain the commutative diagram
  \begin{equation}
    \label{eq:gluedInterleavingTriangles}
    \begin{tikzcd}[column sep={6ex,between origins},row sep={3ex,between origins}]
      f_* K_C \circ \Omega_{8 \varepsilon}
      \arrow[dddd, "f_* K_C \circ \Omega_{0 \leq 8 \varepsilon}"']
      \arrow[dr, "\psi_0 \circ \Omega_{8 \varepsilon}"]
      \\[18ex]
      &[15ex]
      \gamma(4 \varepsilon) \circ \Omega_{4 \varepsilon}
      \arrow[ddd,
      "\gamma(4 \varepsilon) \circ \Omega_{0 \leq 4 \varepsilon}" description]
      \arrow[dr, "\psi_1 \circ \Omega_{2 \varepsilon}"]
      \arrow[dddl, "\varphi_0"']
      \\[7ex]
      & &[4ex]
      \gamma(6 \varepsilon) \circ \Omega_{2 \varepsilon}
      \arrow[dd,
      "\gamma(6 \varepsilon) \circ \Omega_{0 \leq 2 \varepsilon}" description]
      \arrow[dr, "\psi_2 \circ \Omega_{ \varepsilon}"]
      \arrow[ddl, "\varphi_1"']
      \\[4ex]
      & & &
      g_* K_C \circ \Omega_{ \varepsilon}
      \arrow[dl, "\varphi_2"]
      \\[4ex]
      f_* K_C
      &
      \gamma(4 \varepsilon)
      &
      \gamma(6 \varepsilon)
      .
    \end{tikzcd}
  \end{equation}
  
  Now let $I := (-\varepsilon, \varepsilon)$.
  By \cref{cor:cokerCoverInterleaving}
  we have the injective map
  \begin{equation*}
    \kappa_I (\psi_n \circ \Omega_{\delta_n}) \colon
    \kappa_I (\gamma(t_{n-1}) \circ \Omega_{2 \delta_n}) \rightarrow
    \kappa_I (\gamma(t_n) \circ \Omega_{\delta_n})
  \end{equation*}
  for any $n = 1, \dots, N$.
  Moreover,
  as is illustrated by \eqref{eq:gluedInterleavingTriangles},
  any two consecutive homomorphisms
  ${\psi_n \circ \Omega_{\delta_n} \colon
    \gamma(t_{n-1}) \circ \Omega_{2 \delta_n} \rightarrow
    \gamma(t_n) \circ \Omega_{\delta_n}
  }$
  are composable.
  Setting $\delta_0 := 1 - \varepsilon = 2^{N}\varepsilon$, we have $\delta_n = 2 \delta_{n-1}$ for any $n = 0, \dots, N$, and thus we get the composition
  \begin{multline*}
  f_* K_C \circ \Omega_{\delta_0}
  \xrightarrow{\psi_1 \circ \Omega_{\delta_1}}
  \gamma(t_1) \circ \Omega_{\delta_1}
  \xrightarrow{\psi_2 \circ \Omega_{\delta_2}}
  \gamma(t_2) \circ \Omega_{\delta_2}
  \xrightarrow{\psi_3 \circ \Omega_{\delta_3}}
  \dots
  \\
  \dots
  \xrightarrow{\psi_{N-1} \circ \Omega_{\delta_{N-1}}}
  \gamma(t_{N-1}) \circ \Omega_{\delta_{N-1}}
  \xrightarrow{\psi_{N} \circ \Omega_{\delta_{N}}}  
  g_* K_C \circ \Omega_{\delta_N}
  \end{multline*}
  of sheaf homomorphisms.
  Applying the functor $\kappa_I$
  we obtain an injective map
  \begin{equation}
    \label{eq:supposedInjection}
    \kappa_I (f_* K_C \circ \Omega_{\delta_0}) \longrightarrow
    \kappa_I (g_* K_C \circ \Omega_{\delta_N})
    .
  \end{equation}
  Now let
  \begin{equation*}
    I' := \Omega_{\delta_0} (I) = (-1, 1)
    \quad
    \text{and} \quad
    I'' := \Omega_{\delta_N} (I) = (-2\varepsilon, 2\varepsilon).
  \end{equation*}
  Then we have
  \begin{equation*}
    \kappa_I (f_* K_C \circ \Omega_{\delta_0}) =
    \kappa_{I'} (f_* K_C)
    \quad \text{and} \quad
    \kappa_I (g_* K_C \circ \Omega_{\delta_N}) =
    \kappa_{I''} (g_* K_C)
    .
  \end{equation*}
  Moreover,
  evaluating $f_* K_C$ at the corresponding intervals,
  we obtain the commutative diagram
  \begin{equation*}
    \begin{tikzcd}
      (f_* K_C)(-\infty, 1 )
      \arrow[r, "\sim"]
      \arrow[dd]
      &
      K
      \arrow[d, "{\begin{pmatrix} 1 \\ 1 \end{pmatrix}}"]
      &
      (f_* K_C)(-1 , \infty)
      \arrow[l, "\sim"']
      \arrow[dd]
      \\[5ex]
      &
      K^2
      \\
      (f_* K_C)(I')
      \arrow[ur, "\sim" rotate=20]
      \arrow[rr, equal]
      &
      &
      (f_* K_C)(I'),
      \arrow[ul, "\sim"' rotate=-20]
    \end{tikzcd}
  \end{equation*}
  hence ${\kappa_{I'} (f_* K_C) \cong K}$.
  On the other hand, we have
  ${\kappa_{I''} (g_* K_C) \cong \{0\}}$,
  contradicting injectivity of the map
  in \eqref{eq:supposedInjection}.
\end{proof}

\begin{cor}
  \label{cor:factorIntrinsic}
  The interleaving distance of sheaves on the reals
  taking values in the category of $K$-vector spaces
  and the induced intrinsic interleaving distance
  differ globally by at least a factor of $2$.
\end{cor}

\begin{remark}
  In the above argument we did not use that $K$ was a field,
  and so the proof applies to any ring, in particular the integers.
  Thus, \cref{cor:factorIntrinsic} also applies to sheaves
  valued in the category of abelian groups.
\end{remark}

We use \cref{prp:intrinsicDistSheaves}
and \cref{lem:ReebToSheavesInterleaving}
to show that the interleaving distance
of Reeb graphs,
or more specifically $\R$-graphs as defined in \cite[Section 2.3]{MR3505333},
is not intrinsic.
This answers a question raised in \cite[Section 4]{MR3685697}.

\begin{cor}
  The induced intrinsic interleaving distance
  of Reeb cosheaves $\mathrm{RcS}(f)$ and $\mathrm{RcS}(g)$ from the example in \eqref{eq:exampleReeb}
is $1$.
\end{cor}

\begin{proof}
  Since ${|f(p) - g(p)| \leq 1}$ for all ${p \in C}$,
  the induced intrinsic interleaving distance
  of $\mathrm{RcS}(f)$ and $\mathrm{RcS}(g)$ is at most $1$.
  Moreover,
  by \cref{prp:intrinsicDistSheaves}
  and \cref{lem:ReebToSheavesInterleaving}
  the induced intrinsic interleaving distance
  of $\mathrm{RcS}(f)$ and $\mathrm{RcS}(g)$ is at least $1$.  
\end{proof}

\begin{cor}
  \label{cor:factorIntrinsicReebCosheaves}
  The interleaving distance of\, $\mathrm{Set}$-valued cosheaves on $\R$
  and the induced intrinsic interleaving distance
  differ globally by at least a factor of $2$.
\end{cor}

\begin{cor}
  \label{cor:factorIntrinsicReebGraphs}
  The interleaving distance of Reeb graphs
  in the sense of \mbox{\cite[Section 4.5]{MR3505333}}
  and the induced intrinsic interleaving distance
  differ globally by at least a factor of $2$.
\end{cor}

\begin{proof}
  By \cite[Section 4.5]{MR3505333}
  two $\R$-graphs are $\delta$-interleaved
  iff their corresponding
  Reeb cosheaves are $\delta$-interleaved.
\end{proof}

\subsection*{Acknowledgements}

This research has been partially supported by the DFG Collaborative Research Center SFB/TRR 109 \emph{Discretization in Geometry and Dynamics}.
BF thanks Justin Curry for useful discussions on the  analogues to the Eilenberg--Steenrod axioms for a homology theory  stated in \cref{sec:props}.
BF thanks the authors of the Haskell library \href{https://diagrams.github.io/}{\texttt{diagrams}}, which
has been indispensable for creating several of the graphics in this document.

\bibliographystyle{alpha}
\bibliography{bib/tomDieck08.bib,bib/dey-2007.bib,bib/cohen-steiner-2007.bib,bib/cohen-steiner-2009.bib,bib/carlsson-2009.bib,bib/carlsson-2019.bib,bib/mcCleary-2020.bib,bib/bendich-2013.bib,bib/lesnick-2015.bib,bib/klein-2005.bib,bib/berkouk-ginot-oudot-2019.bib,bib/spanier-1981.bib,bib/carlsson-deSilva-kalisnik-morozov-2019.bib,bib/bubenik-2017.bib,bib/levanger-2019.bib,bib/bubenik-scott-2014.bib,bib/berkouk-2018.bib,bib/curry14.bib,bib/edelsbrunner-2002.bib,bib/deSilva-munch-patel-2016.bib,bib/botnan-lesnick-2018.bib,bib/barratt-whitehead-1956.bib,bib/crawley-boevey-2015.bib,bib/kashiwara-schapira-1990.bib,bib/bauer-botnan-fluhr-2021.bib,bib/bauer-fluhr-2022.bib,bib/berkouk-ginot-2022.bib,bib/bubenik-deSilva-scott-2015.bib,bib/bauer-landi-memoli-2021.bib,bib/kashiwara-schapira-2018.bib,bib/DeSha-2021.bib,bib/Catanzaro-2021.bib,bib/scoccola-2020.bib,bib/bauer-bjerkevik-fluhr-2022.bib,bib/cardona-curry-lam-lesnick-2022.bib,bib/carriere-oudot-2017.bib,bib/guillermou-2016.bib,bib/gabriel-1972.bib,bib/stacks-project.bib}

\appendix

\newpage

\section{Stable Functors on the Strip $\M$}
\label{sec:stableFun}

In this \cref{sec:stableFun}
we state a categorical result,
that we use in \cref{sec:constr_pers}
to assemble the Mayer--Vietoris pyramids into a single strip-shaped diagram.
For a proof of this \cref{prp:fundaExt}
we refer to \mbox{\cite[Proposition A.14]{2021arXiv210809298B}}.

Let $\mathcal{A}$ be an additive (or pointed) category (the relevant case for us being $\mathcal{A}=\mathrm{vect}_K^{\Z}$, the graded vector spaces)
and
let $\Sigma$ be an automorphism of $\mathcal{A}$.

\begin{definition}
  A functor
  $F \colon \M \rightarrow \mathcal{A}$
  vanishing on $\partial \M$
  is \emph{strictly stable} if
  $F \circ T = \Sigma \circ F$.
\end{definition}

Now let $F \colon \M \rightarrow \mathcal{A}$
be a strictly stable functor vanishing on $\partial \M$,
let $D$ be a convex subposet of $\M$ that is a fundamental domain
with respect to the action of $\langle T \rangle$,
and let $F' := F |_D$.
We set
\begin{equation}
  \label{dfn:R_D}
  R_D :=
  \set[\big]{
  (v, w) \in D \times T(D) \given
   v \preceq w \preceq T(v)
  }
  .
\end{equation}
If we view $R_D$ as a subposet of $D \times T(D)$ with the product order,
we obtain the two functors
$F' \circ \op{pr}_1 = F \circ \op{pr}_1$ and
$\Sigma \circ F' \circ T^{-1} \circ \op{pr}_2 = F \circ \op{pr}_2$,
where $\op{pr}_1 \colon R_D \rightarrow D$
and $\op{pr}_2 \colon R_D \rightarrow T(D)$
are the projections to the first and the second component, respectively.
We consider a natural transformation $\partial (F, D)$
as in the diagram
\begin{equation}
  \label{eq:strictlyStableRestrictionBoundary}
  \begin{tikzcd}
    R_D
    \arrow[rrr, "\op{pr}_1"]
    \arrow[ddd, "\op{pr}_2"']
    &[-23pt] & &[-18pt]
    D
    \arrow[ddd, "F"]
    \\[-17pt]
    & &
    {}
    \arrow[ld, Rightarrow, "{\partial (F, D)}"]
    \\
    &
    {}
    \\[-20pt]
    T(D)
    \arrow[rrr, "F"']
    & & &
    \mathcal{A}
  \end{tikzcd}
\end{equation}
by letting
\begin{equation}
  \partial (F, D)
  \colon
  F \circ \op{pr}_1 \Rightarrow F \circ \op{pr}_2,
  (v, w) \mapsto F(v \preceq w)
  .
\end{equation}

As it turns out, $F$ is determined by
its restriction $F |_D$ and
the natural transformation $\partial(F, D)$.
To give an explicit expression, we will use
${w \preceq T(v) \in T^{n+1}(D)}$
as a shorthand for
${w, T(v) \in T^{n+1}(D)}$, $n \in \Z$,
and $w \preceq T(v)$.

\begin{lem}
  \label{lem:reduction}
  Suppose $\partial' := \partial (F, D)$.
  Then
  \begin{equation}
    \label{eq:constr_ext}
    F(v \preceq w) =
    \begin{cases}
      (\Sigma^n \circ F' \circ T^{-n})(v \preceq w)
      & v, w \in T^n(D)
      \\
      (\Sigma^n \circ \partial')_{
        \left(T^{-n} (v), T^{-n} (w)\right)
      }
      &
      w \preceq T(v) \in T^{n+1}(D)
      \\
      0
      & \text{otherwise}
    \end{cases}
  \end{equation}
  for all $v \preceq w \in \M$.
\end{lem}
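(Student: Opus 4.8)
The plan is to verify the three cases of \eqref{eq:constr_ext} separately, reducing everything to the defining properties of $F$ and the factorization of comparabilities in $\M$ through the tessellation by $T$. First I would record the basic structural fact about the poset $\M$: for $v \preceq w$ in $\M$, the interval $[v,w]$ is the intersection of an axis-aligned rectangle with $\M$, and since $T$ is a glide reflection carrying $l_0$ to $l_1$, the comparability $v \preceq w$ forces $w \preceq T^k(v)$ for a unique minimal $k \geq 0$ (when such $k$ exists; otherwise $F(v\preceq w)=0$ by \Cref{lem:zero} applied after shifting by a power of $T$, or by the observation that the interval $[v,w]$ then meets $\partial\M$). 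This minimal $k$ is $0$ in the first case, $1$ in the second case, and $\geq 2$ (or undefined) in the third.

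For the first case, $v, w \in T^n(D)$: here $T^{-n}(v) \preceq T^{-n}(w)$ both lie in $D$, and $F(v \preceq w)$ is computed by definition as a map within the tile $T^n(D)$. Since $F$ is strictly stable, $F|_{T^n(D)} = \Sigma^n \circ F' \circ T^{-n}$ as functors on $T^n(D)$ — this is exactly the content of $F \circ T = \Sigma \circ F$ iterated $n$ times — so $F(v \preceq w) = (\Sigma^n \circ F' \circ T^{-n})(v \preceq w)$. For the second case, $w \preceq T(v)$ with $w, T(v) \in T^{n+1}(D)$: then $v \in T^n(D)$ and $w \in T^{n+1}(D)$ lie in adjacent tiles, so again by strict stability we may shift by $T^{-n}$ and assume $n = 0$, i.e. $v \in D$, $w \in T(D)$, and $(v,w) \in R_D$. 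Then by definition $\partial'(v,w) = F(v \preceq w)$, and unwinding the shift gives $F(v \preceq w) = (\Sigma^n \circ \partial')(T^{-n}(v), T^{-n}(w))$. For the third case, where neither $v,w$ share a tile nor $w \preceq T(v)$: then the minimal $k$ with $w \preceq T^k(v)$ is at least $2$, so there is an intermediate point $x$ with $v \preceq x \preceq w$ lying in a tile strictly between, and $F(v \preceq w)$ factors through $F(x)$; I would show this composite is zero either by invoking \Cref{lem:zero} on an appropriate sub-comparability (after translating so that one endpoint lies near $\partial\M$), or directly: the interval $[v,w]$ contains a point of $\partial\M$ whenever the tile-distance exceeds what a single Mayer--Vietoris connecting map spans, and $F$ vanishes on $\partial\M$.

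The main obstacle I expect is the bookkeeping in the third case — precisely characterizing when $F(v \preceq w) = 0$ purely in terms of the tessellation, and checking that "not in the same tile and $w \not\preceq T(v)$" really does force the map to factor through a vanishing object. This requires a clean geometric lemma about $\M$: a comparability $v \preceq w$ spanning two or more tile-boundaries must pass through $l_0 \cup l_1 = \partial\M$ (because the tiles are the $T$-translates of $D = (\uparrow \op{Im}\blacktriangle) \setminus T(\uparrow \op{Im}\blacktriangle)$, and the geometry of the strip between two lines of slope $-1$ means you cannot step "up" by two tiles while staying in the interior). Granting that geometric fact, \Cref{lem:zero} applied to the portion of $[v,w]$ near the boundary — together with functoriality of $F$ — closes the case. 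The first two cases are then essentially formal consequences of the definition of $F$ together with $F \circ T = \Sigma \circ F$.
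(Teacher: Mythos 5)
Your handling of the first two cases is correct and is evidently the intended argument (the paper states \cref{lem:reduction} without proof): within a tile the formula is just strict stability iterated, $F\circ T^n=\Sigma^n\circ F$, and for $w\preceq T(v)\in T^{n+1}(D)$ one translates by $T^{-n}$ into $R_D$ and invokes the definition of $\partial(F,D)$. The gap is in the third case, and it is exactly the point you flag and then grant. Two problems. First, your classification by the minimal $k$ with $w\preceq T^k(v)$ is incorrect: for $v\preceq w$ in the same tile this minimal $k$ is $0$ only when $v=w$, and it can even be $2$ --- for the standard $D$ take $v=(0,0)$ and $w=(-\tfrac{\pi}{2},\tfrac{5\pi}{4})$, both in $D$, yet $w\npreceq T(v)=(-\pi,\pi)$. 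Second, and more importantly, to say that the \enquote{otherwise} case entails $w\npreceq T(v)$ (so that \Cref{lem:zero} applies) you need the statement: if $v\preceq w\preceq T(v)$, then $w$ lies in the tile of $v$ or in the tile of $T(v)$. Equivalently, the sub-case \enquote{$w\preceq T(v)$ but $w$ and $T(v)$ in different tiles} must be shown vacuous; \Cref{lem:zero} is silent there, since its hypothesis $w\npreceq T(v)$ fails. Your proposed geometric lemma is justified only by an appeal to the picture of the specific $D=(\uparrow\op{Im}\blacktriangle)\setminus T(\uparrow\op{Im}\blacktriangle)$, whereas in this appendix $D$ is an arbitrary convex fundamental domain, so as written the plan neither proves the needed claim nor covers the stated generality.

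The missing step has a short order-theoretic proof that needs no picture. Writing out the incidence description of $T$ gives $T(u)=(-\pi-u_2,\pi-u_1)$, so $u\preceq T(u)$ for every $u\in\M$: this is precisely the strip condition $-\pi\le u_1+u_2\le\pi$; note also that $T$ is fixed-point free. Now assume (after translating by a power of $T$) that $v\in D$ and $v\preceq w\preceq T(v)$, and let $k\in\Z$ be the unique integer with $T^{-k}(w)\in D$. If $k\ge 2$, then $T^{-k}(w)\preceq T^{1-k}(v)\preceq T^{-1}(v)\preceq v$, so convexity of $D$ forces $T^{-1}(v)\in D$, contradicting that the fundamental domain $D$ meets the orbit of $v$ only in $v$; if $k\le -1$, then $v\preceq T(v)\preceq T^{-k}(v)\preceq T^{-k}(w)$ forces $T(v)\in D$, the same contradiction. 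Hence $k\in\{0,1\}$, so whenever neither of the first two cases of \eqref{eq:constr_ext} holds we must have $w\npreceq T(v)$, and then \Cref{lem:zero} with $C=\M$ gives $F(v\preceq w)=0$ directly --- no shifting, intermediate points, or separate boundary-point argument is needed, as that is exactly what \Cref{lem:zero} already encapsulates.
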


By the following \cref{prp:fundaExt},
which we prove in 
\mbox{\cite[Proposition A.14]{2021arXiv210809298B}},
there is an inverse construction
to this restriction of a strictly stable functor
${F \colon \M \rightarrow \mathcal{A}}$
to the functor
${F |_D \colon \M \rightarrow \mathcal{A}}$
defined on the fundamental domain ${D \subset \M}$
together with a natrual transformation
as in the diagram \eqref{eq:strictlyStableRestrictionBoundary}.

\begin{prp}[{\cite[Proposition A.14]{2021arXiv210809298B}}]
  \label{prp:fundaExt}
  For any functor $F' \colon D \rightarrow \mathcal{A}$
  vanishing on $D \cap \partial \M$
  together with a natural transformation
  \begin{equation*}
    \begin{tikzcd}
      R_D
      \arrow[rrr, "\op{pr}_1"]
      \arrow[ddd, "\op{pr}_2"']
      &[-23pt] & &[-18pt]
      D
      \arrow[ddd, "F'"]
      \\[-17pt]
      & &
      {}
      \arrow[ld, Rightarrow, "{\partial'}"]
      \\
      &
      {}
      \\[-20pt]
      T(D)
      \arrow[rrr, "\Sigma \circ F' \circ T^{-1}"']
      & & &
      \mathcal{A}
      ,
    \end{tikzcd}
  \end{equation*}
  there is a unique strictly stable functor
  $F \colon \M \rightarrow \mathcal{A}$
  with
  \[F |_D = F', \quad
    F |_{\partial \M} = 0, \quad \text{and} \quad
    \partial (F, D) = \partial' .\]
\end{prp}

\section{Constructing Relative Interlevel Set Homology}
\label{sec:constr_pers}

In this section, we use the formalism of strictly stable functors on $\M$ developed in \cref{sec:stableFun} to construct relative interlevel set homology in a way that yields a functor by construction.
For computations using long exact sequences in homology, a generalization
of the relative interlevel set homology for a function $f \colon X \to \R$ considered on a pair $A \subseteq X$ of spaces will be useful.
For such considerations we will use an intersection-like operation on pairs $A \subseteq X$ of spaces.
We note that this operation turns out to be different from the componentwise intersection, which is the meet operation in the lattice $(2^X \times 2^X, \subseteq)$.

\begin{definition}
We say that a pair of sets $(X, A)$ is \emph{admissible}
if $A \subseteq X$.
For two admissible pairs of sets $(X, A)$ and $(Y, B)$, we define
  \(
  (X, A) \smashcap (Y, B)
  \)
  to be the pair
  \(
  {(X \cap Y, (A \cap Y) \cup (X \cap B))}
  .
  \)
\end{definition}

Substituting intersections by products,
the above definition
becomes very similar to a well known notion of a product of pairs of spaces
used in a relative version of the Künneth Theorem,
see for example \cite[Section 5.3]{MR666554}.
Moreover, if we view the partial order on sets given by inclusions
as the structure of a category, then the intersection is
the categorical product.

\begin{lem}[Closedness and Commutativity]
  For two admissible pairs of sets $(X, A)$ and $(Y, B)$, the pair $(X, A) \smashcap (Y, B)$ is admissible,
  and we have
  $(X, A) \smashcap (Y, B) =
  (Y, B) \smashcap (X, A).$
\end{lem}

Let $(Z, C)$ be another admissible pair.

\begin{lem}
  \label{lem:smashcap_dist}
  Consider an admissible pair of sets $(X, A)$.
  The operation $(X, A) \smashcap -$ preserves
  componentwise unions and intersections
  of admissible pairs.
  In other words, for two further admissible pairs of sets $(Y, B)$ and $(Z, C)$, we have the equation
  \begin{equation*}
    (X, A) \smashcap (Y \cup Z, B \cup C)
    =
    ((X, A) \smashcap (Y, B)) \cup ((X, A) \smashcap (Z, C)) ,
  \end{equation*}
  and similarly for intersections.
  Here, the union on the right hand side of the above equation
  is to be understood as a componentwise union.
\end{lem}

Now let $(X, A)$ be a finite simplicial pair and
$f \colon X \rightarrow \R$ be a piecewise linear map.
In the following we construct the
\emph{relative interlevel set homology}
$h(X, A; f)$
of $f \colon A \subseteq X \rightarrow \R$.
As we now have a relative part $A$ as well,
this is an extension of our construction from \cref{sec:InterlevelHom}.
In order to construct the functor $h(X, A; f)$,
we use a similar procedure as in \cite[Section 2]{2021arXiv210809298B}.
To this end, we set
\begin{equation*}
  F' \colon D \rightarrow \mathrm{vect}_K^{\Z},
  u \mapsto H_{\bullet} ((X, A) \smashcap f^{-1}(\rho(u)); K)
  ,
\end{equation*}
where $\mathrm{vect}_K^{\Z}$ is the category
of finite-dimensional graded vector space over $K$.
We construct $h(X, A; f) \colon \M \rightarrow \mathrm{vect}_K$
in such a way that it carries the same (and more) information as $F'$
with precomposition by $T$ taking the place of the degree shift
\begin{equation*}
  \Sigma \colon \mathrm{vect}_K^{\Z} \rightarrow \mathrm{vect}_K^{\Z},
  M_{\bullet} \mapsto M_{\bullet - 1}
  .
\end{equation*}
As an intermediate step,
we extend $F'$ to a functor
\begin{equation*}
  F \colon \M \rightarrow \mathrm{vect}_K^{\Z}
  ,
\end{equation*}
which is $\Z$-equivariant or strictly stable in the sense that
\begin{equation}
  \label{eq:strictlyStable}
  F \circ T = \Sigma \circ F
  .
\end{equation}
Now as a map into the objects of $\mathrm{vect}_K^{\Z}$
such a functor $F$ carries no new information
in comparison to $F'$.
Moreover, by \eqref{eq:strictlyStable} most of the information
carried by $F$ is redundant, and we discard all redundant information
by post-composition with the projection
\begin{equation*}
  \op{pr}_0 \colon \mathrm{vect}_K^{\Z} \rightarrow \mathrm{vect}_K, \,
  M_{\bullet} \mapsto M_0
  .
\end{equation*}

\begin{figure}[t]
  \centering
\begingroup%
  \makeatletter%
  \providecommand\color[2][]{%
    \errmessage{(Inkscape) Color is used for the text in Inkscape, but the package 'color.sty' is not loaded}%
    \renewcommand\color[2][]{}%
  }%
  \providecommand\transparent[1]{%
    \errmessage{(Inkscape) Transparency is used (non-zero) for the text in Inkscape, but the package 'transparent.sty' is not loaded}%
    \renewcommand\transparent[1]{}%
  }%
  \providecommand\rotatebox[2]{#2}%
  \newcommand*\fsize{\dimexpr\f@size pt\relax}%
  \newcommand*\lineheight[1]{\fontsize{\fsize}{#1\fsize}\selectfont}%
  \ifx\svgwidth\undefined%
    \setlength{\unitlength}{300bp}%
    \ifx\svgscale\undefined%
      \relax%
    \else%
      \setlength{\unitlength}{\unitlength * \real{\svgscale}}%
    \fi%
  \else%
    \setlength{\unitlength}{\svgwidth}%
  \fi%
  \global\let\svgwidth\undefined%
  \global\let\svgscale\undefined%
  \makeatother%
  \begin{picture}(1,0.5)%
    \lineheight{1}%
    \setlength\tabcolsep{0pt}%
    \put(0,0){\includegraphics[width=\unitlength,page=1]{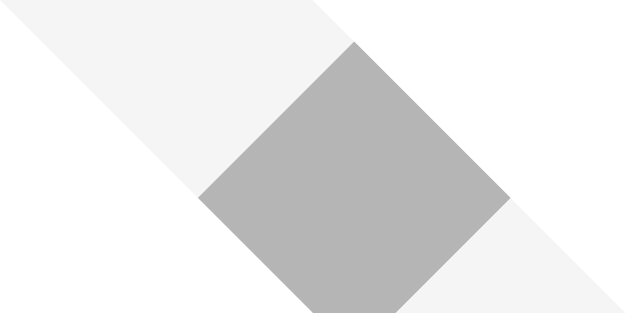}}%
    \put(0.3625,0.35833325){\makebox(0,0)[t]{\lineheight{1.25}\smash{\begin{tabular}[t]{c}$\hat{u}$\end{tabular}}}}%
    \put(0.26875,0.4125){\makebox(0,0)[t]{\lineheight{1.25}\smash{\begin{tabular}[t]{c}$\Sigma(D)$\end{tabular}}}}%
    \put(0.5125,0.225){\makebox(0,0)[t]{\lineheight{1.25}\smash{\begin{tabular}[t]{c}$w$\end{tabular}}}}%
    \put(0.68333325,0.25){\makebox(0,0)[t]{\lineheight{1.25}\smash{\begin{tabular}[t]{c}$v_2$\end{tabular}}}}%
    \put(0.49166675,0.09166675){\makebox(0,0)[t]{\lineheight{1.25}\smash{\begin{tabular}[t]{c}$v_1$\end{tabular}}}}%
    \put(0.67083325,0.09583325){\makebox(0,0)[t]{\lineheight{1.25}\smash{\begin{tabular}[t]{c}$u$\end{tabular}}}}%
    \put(0,0){\includegraphics[width=\unitlength,page=2]{constr-boundary.pdf}}%
    \put(0.7809475,0.2590525){\makebox(0,0)[t]{\lineheight{1.25}\smash{\begin{tabular}[t]{c}$l_1$\end{tabular}}}}%
    \put(0.226675,0.223325){\makebox(0,0)[t]{\lineheight{1.25}\smash{\begin{tabular}[t]{c}$l_0$\end{tabular}}}}%
  \end{picture}%
\endgroup%

  \caption{
    The axis-aligned rectangle $u \preceq v_1, v_2 \preceq w \in D$
    determined by $(w, \hat{u}) \in R_D$.
  }
  \label{fig:constrBoundary}
\end{figure}

Now in order to obtain such a strictly stable functor $F$
from $F'$ we need to glue consecutive layers
using connecting homomorphisms.
To this end, let
\[R_D :=
  \{
  (w, \hat{u}) \in D \times T(D) \mid
  w \preceq \hat{u} \preceq T(w)
  \}
\]
as in \cref{dfn:R_D}. %
As shown in \cref{fig:constrBoundary}, any pair $(w, \hat{u}) \in R_D$
determines an axis-aligned rectangle $u \preceq v_1, v_2 \preceq w \in D$
with $T(u) = \hat{u}$.
Moreover,
as this rectangle is contained in $D$,
the corresponding join $w = v_1 \vee v_2$ and meet $u = v_1 \wedge v_2$
are preserved by $\rho$ by \cref{prp:rho}.3.
Furthermore, since taking preimages is a homomorphism of boolean algebras,
$f^{-1}$ also preserves joins and meets,
which in this case are the componentwise unions and intersections.
Finally, by \cref{lem:smashcap_dist}, joins and meets are also preserved by $(X, A) \smashcap -$.
This means
that $(X, A) \smashcap f^{-1} (\rho(u))$ is the componentwise intersection
of $(X, A) \smashcap f^{-1} (\rho(v_1))$
and $(X, A) \smashcap f^{-1} (\rho(v_2))$,
while $(X, A) \smashcap f^{-1} (\rho(w))$ is their union.
As $f$ is piecewise linear and $A$ a subcomplex of $X$, the triad
$(X, A) \smashcap f^{-1} (\rho(w); \rho(v_1), \rho(v_2))$
of pairs is excisive in each component.
Thus, we have the boundary map
\begin{equation*}
  \partial'_{(w, \hat{u})} \colon
  H_{\bullet} ((X, A) \smashcap f^{-1}(\rho(w)); K)
  \rightarrow
  H_{\bullet-1} ((X, A) \smashcap f^{-1}(\rho(u)); K)
\end{equation*}
of the corresponding Mayer--Vietoris sequence
as described by \cite[Theorem 10.7.7]{MR2456045}.
Now let
$\op{pr}_1 \colon R_D \rightarrow D$
and $\op{pr}_2 \colon R_D \rightarrow T(D)$
be the projections to the first and the second component, respectively.
Then we have functors
$F' \circ \op{pr}_1$ and
$\Sigma \circ F' \circ T^{-1} \circ \op{pr}_2$
from $R_D$ to $\mathrm{vect}_K^{\Z}$
and $\partial'$ is a natural transformation
\begin{equation*}
  \partial' \colon
  F' \circ \op{pr}_1 \Rightarrow
  \Sigma \circ F' \circ T^{-1} \circ \op{pr}_2
  .
\end{equation*}
Thus,
the functor $F' \colon D \rightarrow \mathrm{vect}_K^{\Z}$
and $\partial'$ determine a unique strictly stable functor
$F \colon \M \rightarrow \mathrm{vect}_K^{\Z}$
by \cref{prp:fundaExt}. %
\sloppy
To obtain a functor of type
${\M \rightarrow \mathrm{vect}_K}$ from $F$ we post-compose
${F \colon \M \rightarrow \mathrm{vect}_K^{\Z}}$
with the projection
$\op{pr}_0 \colon \mathrm{vect}_K^{\Z} \rightarrow \mathrm{vect}_K$
and we define
\begin{equation*}
  h(X, A; f) := h(X, A; f; K) := \op{pr}_0 \circ \, F
  .
\end{equation*}

\section{Structure of Relative Interlevel Set Homology}
\label{sec:struct}

In the following we describe the structure
of relative interlevel set homology
in order to show that our \cref{dfn:diagramFunction}
of the extended persistence diagram
is indeed equivalent to the original definition from \cite{Cohen-Steiner2009}.
This relation to extended persistence is also discussed in
\cite{Carlsson:2009:ZPH:1542362.1542408,Bendich-2013}.
Moreover, the authors of
\cite{Bendich-2013} have shown
that the restrictions of relative interlevel set homology
to each individual tile (called \emph{pyramid} in that article)
can be decomposed into restricted blocks.
We show that the relative interlevel set homology
decomposes into the following type
of rectangle-shaped indecomposables, see also \cref{fig:block}.

\begin{definition}[Block]
  \label{dfn:block}
  For $v \in \op{int} \M$ we define
  \begin{equation*}
    B^v \colon \M \rightarrow \mathrm{Vect}_K,\,
    w \mapsto
    \begin{cases}
      K & w \in (\uparrow v) \cap \op{int} (\downarrow T(v))
      \\
      \{0\} & \text{otherwise}
      ,
    \end{cases}
  \end{equation*}
  where $\op{int} (\downarrow T(v))$ is the interior
  of the downset of $T(v)$ in $\M$.
  The internal maps are identities whenever both domain and codomain are $K$,
  otherwise they are zero.
\end{definition}

\begin{figure}[t]
  \centering
\begingroup%
  \makeatletter%
  \providecommand\color[2][]{%
    \errmessage{(Inkscape) Color is used for the text in Inkscape, but the package 'color.sty' is not loaded}%
    \renewcommand\color[2][]{}%
  }%
  \providecommand\transparent[1]{%
    \errmessage{(Inkscape) Transparency is used (non-zero) for the text in Inkscape, but the package 'transparent.sty' is not loaded}%
    \renewcommand\transparent[1]{}%
  }%
  \providecommand\rotatebox[2]{#2}%
  \newcommand*\fsize{\dimexpr\f@size pt\relax}%
  \newcommand*\lineheight[1]{\fontsize{\fsize}{#1\fsize}\selectfont}%
  \ifx\svgwidth\undefined%
    \setlength{\unitlength}{270bp}%
    \ifx\svgscale\undefined%
      \relax%
    \else%
      \setlength{\unitlength}{\unitlength * \real{\svgscale}}%
    \fi%
  \else%
    \setlength{\unitlength}{\svgwidth}%
  \fi%
  \global\let\svgwidth\undefined%
  \global\let\svgscale\undefined%
  \makeatother%
  \begin{picture}(1,0.5)%
    \lineheight{1}%
    \setlength\tabcolsep{0pt}%
    \put(0,0){\includegraphics[width=\unitlength,page=1]{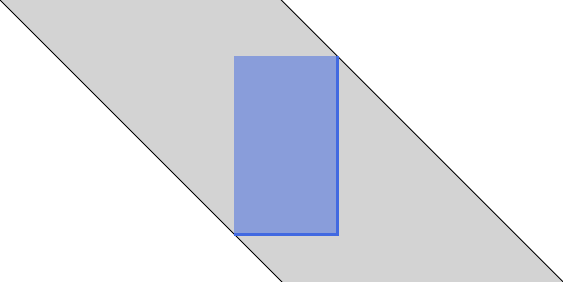}}%
    \put(0.30416667,0.33333333){\makebox(0,0)[t]{\lineheight{1.25}\smash{\begin{tabular}[t]{c}$\{0\}$\end{tabular}}}}%
    \put(0.725,0.125){\makebox(0,0)[t]{\lineheight{1.25}\smash{\begin{tabular}[t]{c}$\{0\}$\end{tabular}}}}%
    \put(0.50833333,0.22916667){\makebox(0,0)[t]{\lineheight{1.25}\smash{\begin{tabular}[t]{c}$K$\end{tabular}}}}%
    \put(0.3875,0.42083333){\makebox(0,0)[t]{\lineheight{1.25}\smash{\begin{tabular}[t]{c}$T(v)$\end{tabular}}}}%
    \put(0.61666667,0.05833333){\makebox(0,0)[t]{\lineheight{1.25}\smash{\begin{tabular}[t]{c}$v$\end{tabular}}}}%
    \put(0,0){\includegraphics[width=\unitlength,page=2]{block.pdf}}%
    \put(0.77297806,0.26035528){\makebox(0,0)[t]{\lineheight{1.25}\smash{\begin{tabular}[t]{c}$l_1$\end{tabular}}}}%
    \put(0.29691611,0.16141722){\makebox(0,0)[t]{\lineheight{1.25}\smash{\begin{tabular}[t]{c}$l_0$\end{tabular}}}}%
  \end{picture}%
\endgroup%

  \caption{
    The indecomposable $B^v \colon \M \rightarrow \mathrm{Vect}_K$.
  }
  \label{fig:block}
\end{figure}

To this end, we make use of a structure theorem
from \mbox{\cite[Theorem 3.21]{2021arXiv210809298B}},
which applies to certain functors from $\M$ to $\mathrm{vect}_K$
and in particular to the relative interlevel set homology
$h(f) \colon \M \rightarrow \mathrm{vect}_K$
of a piecewise linear function $f \colon X \rightarrow \R$
on a finite simplicial complex $X$.
More specifically, our structure theorem assumes
that the following two notions apply to
$h(f) \colon \M \rightarrow \mathrm{vect}_K$.

\begin{definition}
  \label{dfn:cont}
  We say that a functor
  $F \colon \M \rightarrow \mathrm{Vect}_K$
  is \emph{sequentially continuous}
  if for any decreasing sequence
  $(u_k)_{k=1}^{\infty}$ in $\M$
  converging to $u$
  the natural map
  \begin{equation*}
    F(u) \rightarrow \varprojlim_{k} F(u_k)
  \end{equation*}
  is an isomorphism;
  see also \cite[Definition 2.4]{2021arXiv210809298B}.
\end{definition}

As $f \colon X \rightarrow \R$
is simplexwise linear on some finite triangulation of $X$
and as singular homology is homotopy invariant,
the relative interlevel set homology
$h(f) \colon \M \rightarrow \mathrm{vect}_K$
is indeed sequentially continuous.

\begin{definition}
  \label{dfn:homological}
  We say that a functor $F \colon \M \rightarrow \mathrm{Vect}_K$
  vanishing on $\partial \M$ is \emph{homological}
  if for any axis-aligned rectangle
  with one corner lying on $l_1$ and the other corners
  ${u \preceq v \preceq w \in \M}$,
  the long sequence
  \begin{equation*}
    \qquad
    \begin{tikzcd}
      &
      \cdots
      \arrow[r]
      &
      F(T^{-1}(w))
      \arrow[dll, out=0, in=180, looseness=2, overlay]
      \\
      F(u)
      \arrow[r]
      &
      F(v)
      \arrow[r]
      &
      F(w)
      \arrow[dll, out=0, in=180, looseness=2, overlay]
      \\
      F(T(u))
      \arrow[r]
      &
      \cdots
    \end{tikzcd}
  \end{equation*}
  is exact;
  see also
  \cref{fig:homological} or
  \cite[Definition C.3]{2021arXiv210809298B}.
\end{definition}

\begin{figure}[t]
  \centering
  \import{strip-diagrams/_diagrams/}{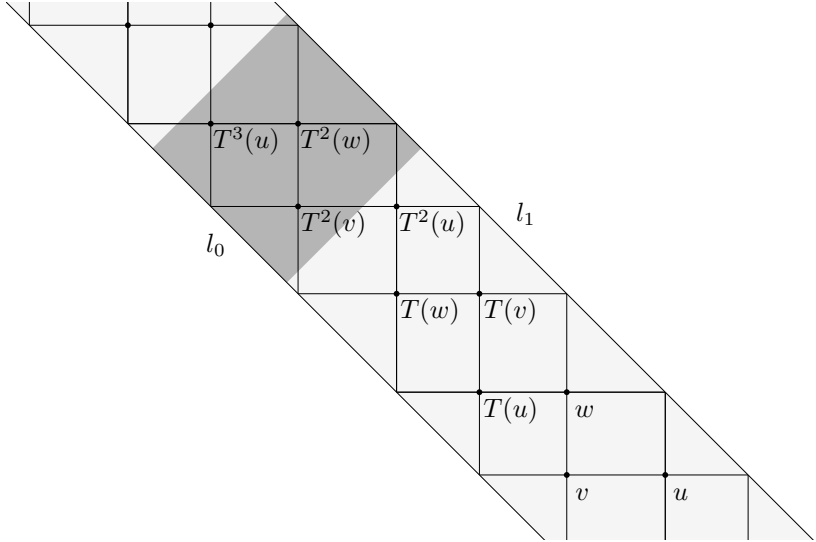}
  \caption{
    The linear subposet given by the orbits of $u$, $v$, and $w$.
  }
  \label{fig:homological}
\end{figure}

In \cite[Proposition C.4]{2021arXiv210809298B} we provide
several equivalent characterizations of homological functors.
We will shortly use the following one.

\begin{prp}
\label{prp:homological}
A functor $F \colon \M \rightarrow \mathrm{Vect}_K$
vanishing on $\partial \M$
is homological iff
for any axis-aligned rectangle
$u \preceq v_1, v_2 \preceq w \in D$
as shown in \cref{fig:constrBoundary}
the long sequence
\begin{equation}
\label{eq:charHomological}
\qquad ~~
\begin{tikzcd}
  &
  \cdots
  \arrow[r]
  &
  F(T^{-1}(w))
  \arrow[dll, out=0, in=180, looseness=2, overlay]
  \\
  F(u)
  \arrow[r]
  &
  F(v_1) \oplus F(v_2)
  \arrow[r, "(1 ~~ -1)"]
  &
  F(w)
  \arrow[dll, out=0, in=180, looseness=2, overlay]
  \\
  F(T(u))
  \arrow[r]
  &
  \cdots
\end{tikzcd}  
\end{equation}
is exact.
\end{prp}

Now if we set $F := h(f) \colon \M \rightarrow \mathrm{vect}_K$,
then the long sequence 
\eqref{eq:charHomological}
is a Mayer--Vietoris sequence by the construction
of $h(f)$ and hence exact.
Thus, $h(f) \colon \M \rightarrow \mathrm{vect}_K$ is homological.
All in all, the relative interlevel set homology
$h(f) \colon \M \rightarrow \mathrm{vect}_K$
satisfies the assumptions of the following theorem
from \cite[Theorem 3.21]{2021arXiv210809298B}.

\begin{thm}
  \label{thm:decomp}
  Any sequentially continuous homological functor
  $F \colon \M \rightarrow \mathrm{vect}_K$
  decomposes as
  \begin{equation*}
    F \cong \bigoplus_{v \in \op{int} \M} (B^v)^{\oplus \nu(v)}
    ,
  \end{equation*}
  where $\nu := \op{Dgm}(F)$.
\end{thm}

In particular
the relative interlevel set homology of $f \colon X \rightarrow \R$
is completely classified by the extended persistence diagram
$\op{Dgm}(f)$.

\subsection{Equivalence of Definitions for Extended Persistence Diagrams}
\label{sec:equivalenceOfDefs}
  
We now explain how this implies
that our definition of the extended persistence diagram
is equivalent to the definition from \cite{Cohen-Steiner2009}.
This discussion is closely related to \cite[Section 3.2]{2021arXiv210809298B},
where we also describe a connection to the level set barcode.
We consider \cref{fig:relExtended}
and the restriction of $h(f) \colon \M \rightarrow \mathrm{vect}_K$
to the subposet of $\M$,
which is shaded in blue in this figure.
Here each point on the vertical blue line segment
to the upper left is assigned the homology space in degree $0$
of a sublevel set of $f$, of $X$,
or of a pair with $X$ as the first component and
a superlevel set as the second component.
Up to isomorphism of posets,
this is the extended persistent homology
of $f \colon X \rightarrow \R$ in degree $0$.
Similarly, any point on the horizontal blue line segment in the center
is assigned the homology space of some pair of preimages in degree $1$
and any point on the vertical blue line at the lower right
is assigned the homology of some pair in degree $2$.
By \cref{thm:decomp} the relative interlevel set homology $h(f)$ decomposes
into blocks as in \cref{dfn:block}.
Now the support of each such block
intersects exactly one of these blue line segments.
We focus on the horizontal blue line segment in the center
of \cref{fig:relExtended},
which carries the extended persistent homology
of $f \colon X \rightarrow \R$ in degree $1$.
Any choice of decomposition of $h(f) \colon \M \rightarrow \mathrm{vect}_K$
yields a decomposition of its restriction
to this line segment and thus of persistent cohomology in degree $1$.
Moreover, the support of the contravariant block
assigned to any of the black dots
in \cref{fig:relExtended}
intersects this horizontal line segment.

\begin{figure}[t]
  \centering
\begingroup%
  \makeatletter%
  \providecommand\color[2][]{%
    \errmessage{(Inkscape) Color is used for the text in Inkscape, but the package 'color.sty' is not loaded}%
    \renewcommand\color[2][]{}%
  }%
  \providecommand\transparent[1]{%
    \errmessage{(Inkscape) Transparency is used (non-zero) for the text in Inkscape, but the package 'transparent.sty' is not loaded}%
    \renewcommand\transparent[1]{}%
  }%
  \providecommand\rotatebox[2]{#2}%
  \newcommand*\fsize{\dimexpr\f@size pt\relax}%
  \newcommand*\lineheight[1]{\fontsize{\fsize}{#1\fsize}\selectfont}%
  \ifx\svgwidth\undefined%
    \setlength{\unitlength}{300.51638031bp}%
    \ifx\svgscale\undefined%
      \relax%
    \else%
      \setlength{\unitlength}{\unitlength * \real{\svgscale}}%
    \fi%
  \else%
    \setlength{\unitlength}{\svgwidth}%
  \fi%
  \global\let\svgwidth\undefined%
  \global\let\svgscale\undefined%
  \makeatother%
  \begin{picture}(1,0.60396042)%
    \lineheight{1}%
    \setlength\tabcolsep{0pt}%
    \put(0,0){\includegraphics[width=\unitlength,page=1]{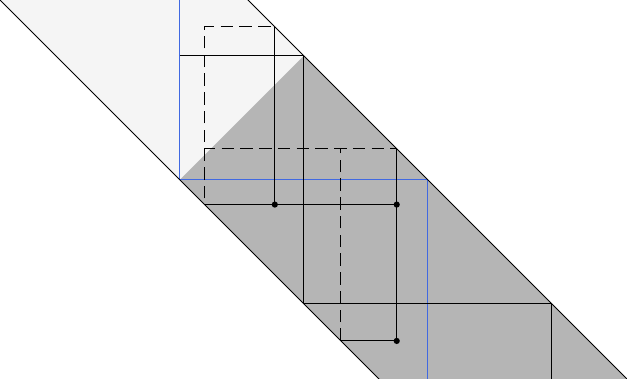}}%
    \put(0.60433228,0.42867089){\makebox(0,0)[t]{\lineheight{1.25}\smash{\begin{tabular}[t]{c}$l_1$\end{tabular}}}}%
    \put(0.16381952,0.39888691){\makebox(0,0)[t]{\lineheight{1.25}\smash{\begin{tabular}[t]{c}$l_0$\end{tabular}}}}%
  \end{picture}%
\endgroup%

  \caption{
    The subposet of $\M$ corresponding to extended persistence
    shaded in blue
    as well as three vertices contained in the domains corresponding to
    $1$-dimensional relative, extended, and ordinary persistence;
    see also \cref{fig:subdiagrams}.
  }
  \label{fig:relExtended}
\end{figure}

First assume that
the black dot on the lower right appears in
$\op{Dgm}(f)$.
Then the restriction of the associated block
to the horizontal blue line segment
is a direct summand of the persistent homology
of $f \colon X \rightarrow \R$ in degree $1$
and the intersection of its support with the blue line segment
is the life span of the corresponding feature
in the sense that the point of intersection of the right edge
marks the birth of a homology class that dies
as soon as it is mapped into the sublevel set corresponding
to the point of intersection of the left edge.
Moreover, this life span is encoded by the position of this black dot.
Now this particular black dot on the lower right
of \cref{fig:relExtended}
is contained in the triangular region labeled as $\mathrm{Ord}_1$
in \cref{fig:subdiagrams}.
Furthermore, any vertex of the extended persistence diagram $\op{Dgm}(f)$
contained in the triangular region labeled $\mathrm{Ord}_1$
describes a feature of $f \colon X \rightarrow \R$,
which is born at some sublevel set
and also dies at some sublevel set.
Thus, up to reparametrization,
the ordinary persistence diagram
of $f \colon X \rightarrow \R$ in degree $1$
is the restriction of $\op{Dgm}(f) \colon \M \rightarrow \N_0$
to the region labeled $\mathrm{Ord}_1$ in \cref{fig:subdiagrams}.

Now suppose that the black dot on the upper left in \cref{fig:relExtended}
appears in $\op{Dgm}(f)$.
Then the intersection of the support of the associated
contravariant block with the horizontal blue line segment
describes the life span of a feature which is born
at the homology of $X$ relative to some superlevel set in degree $1$
and also dies at some relative homology space.
Moreover, this is true for any vertex of $\op{Dgm}(f)$
contained in the triangular region labeled $\mathrm{Rel}_1$
in \cref{fig:subdiagrams}.
Thus, up to reparametrization,
the relative subdiagram of $f \colon X \rightarrow \R$ in degree $1$
is the restriction of $\op{Dgm}(f) \colon \M \rightarrow \N_0$
to the region labeled $\mathrm{Rel}_1$ in \cref{fig:subdiagrams}.

Finally, the black dot to the upper right in \cref{fig:relExtended}
(if in $\op{Dgm}(f)$),
or any other vertex of $\op{Dgm}(f)$
in the square region labeled $\mathrm{Ext}_1$ in \cref{fig:subdiagrams},
describes a feature,
which is born
at the homology of some sublevel set of $f \colon X \rightarrow \R$
and dies at the homology of $X$ relative to some superlevel set.
Thus,
the extended subdiagram of $f \colon X \rightarrow \R$ in degree $1$
is the restriction of $\op{Dgm}(f) \colon \M \rightarrow \N_0$
to the square region labeled $\mathrm{Ext}_1$ in \cref{fig:subdiagrams}.

As we have analogous correspondences for each line segment
of the subposet of $\M$ shaded in blue
in \cref{fig:relExtended},
we obtain a partition of the lower right part of the strip $\M$ into regions,
corresponding to ordinary, relative, and extended subdiagrams
as labeled in \cref{fig:subdiagrams}
analogous to \cite{Cohen-Steiner2009}.

\section{Canceling Boundaries from the Zig-zag Lemma}
\label{sec:anticommutingBoundaries}

In order to show \cref{lem:MVseq},
we use a result from homological algebra,
which is \cref{lem:anticommutingBoundaries} below.
Before we get to \cref{lem:anticommutingBoundaries},
we collect some basic facts on abelian categories.

\begin{lem}
  \label{lem:pushoutDirectSum}
  Suppose we have a commutative diagram
  \begin{equation*}
    \begin{tikzcd}
      &
      0
      \arrow[d]
      &[3ex]
      0
      \arrow[d]
      &[3ex]
      0
      \arrow[d]
      \\
      0
      \arrow[r]
      &
      A
      \arrow[r]
      \arrow[d]
      \arrow[dr, phantom, "\ulcorner", very near end]
      &
      B
      \arrow[r]
      \arrow[d]
      &
      C
      \arrow[r]
      \arrow[dd, dashed, bend right=20]
      &
      0
      \\[3ex]
      0
      \arrow[r]
      &
      A'
      \arrow[r]
      \arrow[d]
      &
      P
      \arrow[ru]
      \arrow[dl]
      \arrow[dr, "{\mathrm{pr}}", two heads]
      \\[3ex]
      &
      A''
      \arrow[rr, dashed, bend left=20]
      \arrow[d]
      &
      &
      P/A
      \arrow[ll, dashed, bend left=20]
      \arrow[uu, dashed, bend right=20]
      \\
      &
      0
    \end{tikzcd}
  \end{equation*}
  with the straight and the buckled rows and columns exact,
  $P$ a pushout,
  and
  ${\mathrm{pr} \colon P \rightarrow P / A}$ a cokernel as indicated.
  Moreover,
  we assume the dashed arrows are induced by the universal property
  of cokernels.
  Then the dashed arrows exhibit ${P / A}$ as a direct sum
  ${A'' \oplus C}$.
\end{lem}

\begin{proof}
  Here the essential observation is
  that the two pushout induced homomorphisms
  in diagrams
  \begin{equation*}
    \begin{tikzcd}
      A
      \arrow[r]
      \arrow[d]
      \arrow[dr, phantom, "\ulcorner", very near end]
      &
      B
      \arrow[d]
      \arrow[ddr, bend left, "0"]
      \\
      A'
      \arrow[r]
      \arrow[drr, bend right, "{\mathrm{pr} |_{A'}}"']
      &
      P
      \arrow[dr, dashed]
      \\
      &
      &
      P/A
    \end{tikzcd}
    \quad \text{and}
    \quad
    \begin{tikzcd}
      A
      \arrow[r]
      \arrow[d]
      \arrow[dr, phantom, "\ulcorner", very near end]
      &
      B
      \arrow[d]
      \arrow[ddr, bend left, "{\mathrm{pr} |_{B}}"]
      \\
      A'
      \arrow[r]
      \arrow[drr, bend right, "0"']
      &
      P
      \arrow[dr, dashed]
      \\
      &
      &
      P/A
    \end{tikzcd}
  \end{equation*}
  add up to the canonical projection
  ${\mathrm{pr} \colon P \rightarrow P / A}$.
\end{proof}

\begin{lem}
  \label{lem:sesPushoutPullback}
  Suppose we have the commutative diagram
  \begin{equation*}
    \begin{tikzcd}
      0
      \arrow[r]
      &
      A
      \arrow[rr]
      \arrow[dd]
      \arrow[dr, phantom, "\ulcorner", very near end]
      &
      &
      B
      \arrow[r]
      \arrow[dd]
      \arrow[dl]
      &
      C
      \arrow[r]
      \arrow[dd]
      &
      0
      \\[2ex]
      &
      &[4ex]
      P
      \arrow[urr, dashed]
      \arrow[dr, dashed]
      \\
      0
      \arrow[r]
      &
      A'
      \arrow[rr]
      \arrow[ur]
      \arrow[uurrr, bend right=15, "0"', near end]
      &
      &
      B'
      \arrow[r]
      &
      C'
      \arrow[r]
      &
      0
    \end{tikzcd}
  \end{equation*}
  with exact rows and $P$ a pushout as indicated.
  Then the two dashed arrows induced by the universal property of the pushout
  exhibit $P$ as a pullback:
  \begin{equation}
    \label{eq:pullbackPCB'C'}
    \begin{tikzcd}
      P
      \arrow[d]
      \arrow[r]
      \arrow[dr, phantom, "\lrcorner", very near start]
      &
      C
      \arrow[d]
      \\
      B'
      \arrow[r]
      &
      C'.
    \end{tikzcd}
  \end{equation}
\end{lem}

\begin{proof}
  This follows from the five lemma.
\end{proof}

\begin{lem}
  \label{lem:sesPushout3by3}
  Suppose we have a commutative diagram
  \begin{equation}
    \label{eq:3by3ses}
    \begin{tikzcd}
      &
      0
      \arrow[d]
      &
      &
      0
      \arrow[d]
      &
      0
      \arrow[d]
      \\
      0
      \arrow[r]
      &
      A
      \arrow[rr]
      \arrow[dd]
      \arrow[dr, phantom, "\ulcorner", very near end]
      &
      &
      B
      \arrow[r]
      \arrow[dd]
      \arrow[dl]
      &
      C
      \arrow[r]
      \arrow[dd]
      &
      0
      \\[2ex]
      &
      &[4ex]
      P
      \arrow[dr, dashed]
      \\
      0
      \arrow[r]
      &
      A'
      \arrow[rr]
      \arrow[d]
      \arrow[ur]
      &
      &
      B'
      \arrow[r]
      \arrow[d]
      &
      C'
      \arrow[r]
      \arrow[d]
      &
      0
      \\
      0
      \arrow[r]
      &
      A''
      \arrow[rr]
      \arrow[d]
      &
      &
      B''
      \arrow[r]
      \arrow[d]
      &
      C''
      \arrow[r]
      \arrow[d]
      &
      0
      \\
      &
      0
      &
      &
      0
      &
      0
    \end{tikzcd}
  \end{equation}
  with exact rows and columns and $P$ a pushout as indicated.
  Then the sequence
  \begin{equation*}
    0 \rightarrow P \rightarrow B' \rightarrow C \rightarrow 0
  \end{equation*}
  is exact.
\end{lem}

\begin{proof}
  This follows from \cref{lem:sesPushoutPullback}
  and the fact that the dashed arrow in \eqref{eq:3by3ses}
  is one of the two legs of the corresponding pullback diagram
  \eqref{eq:pullbackPCB'C'}.
\end{proof}

\begin{lem}
  \label{lem:anticommutingBoundaries}
  Suppose we have the commutative diagram
  \begin{equation}
    \label{eq:3x3chainComplexes}
    \begin{tikzcd}
      &
      0
      \arrow[d]
      &
      0
      \arrow[d]
      &
      0
      \arrow[d]
      \\
      0
      \arrow[r]
      &
      A
      \arrow[r]
      \arrow[d]
      &
      B
      \arrow[r]
      \arrow[d]
      &
      C
      \arrow[r]
      \arrow[d]
      &
      0
      \\
      0
      \arrow[r]
      &
      A'
      \arrow[r]
      \arrow[d]
      &
      B'
      \arrow[r]
      \arrow[d]
      &
      C'
      \arrow[r]
      \arrow[d]
      &
      0
      \\
      0
      \arrow[r]
      &
      A''
      \arrow[r]
      \arrow[d]
      &
      B''
      \arrow[r]
      \arrow[d]
      &
      C''
      \arrow[r]
      \arrow[d]
      &
      0
      \\
      &
      0
      &
      0
      &
      0
    \end{tikzcd}
  \end{equation}
  of chain complexes of vector spaces over $K$
  (or modules over some ring)
  with levelwise exact rows and columns.
  Moreover,
  let ${n \in \Z}$ and let
  \begin{align*}
    \partial \colon H_n(C) & \rightarrow H_{n-1}(A),
    \\
    \partial_A \colon H_n(A'') & \rightarrow H_{n-1}(A),
    \\
    \partial'' \colon H_{n+1}(C'') & \rightarrow H_{n}(A''),
    \\
    \text{and}
    \quad
    \partial_C \colon H_{n+1}(C'') & \rightarrow H_{n}(C)
  \end{align*}
  be the corresponding boundary maps obtained from the zig-zag lemma
  when applied to the outermost exact rows and columns
  of \eqref{eq:3x3chainComplexes}.
  Then we have ${\partial_A \circ \partial'' = -\partial \circ \partial_C}$;
  or in other words,
  the square
  \begin{equation*}
    \begin{tikzcd}[row sep=2ex, column sep=2ex]
      H_{n+1}(C'')
      \arrow[dd, "\partial''"']
      \arrow[rr, "\partial_C"]
      &
      &
      H_{n}(C)
      \arrow[dd, "\partial"]
      \\
      &
      -
      \\
      H_{n}(A'')
      \arrow[rr, "\partial_A"']
      &
      &
      H_{n-1}(A)
    \end{tikzcd}
  \end{equation*}
  is anti-commutative.
\end{lem}

\begin{proof}
  First we form the pushout
  \begin{equation*}
    \begin{tikzcd}
      A
      \arrow[r]
      \arrow[d]
      \arrow[dr, phantom, "\ulcorner", very near end]
      &
      B
      \arrow[d]
      \\
      A'
      \arrow[r]
      &
      P
      .
    \end{tikzcd}
  \end{equation*}
  Now the universal property of the pushout $P$ yields
  commutative diagrams
  \begin{equation}
    \label{eq:extDown}
    \begin{tikzcd}
      0
      \arrow[r]
      &
      A
      \arrow[r]
      \arrow[d]
      &
      B
      \arrow[r]
      \arrow[d]
      &
      C
      \arrow[r]
      \arrow[d, equal]
      &
      0
      \\
      0
      \arrow[r]
      &
      A'
      \arrow[r]
      \arrow[rr, "0"', bend right]
      &
      P
      \arrow[r]
      &
      C
      \arrow[r]
      &
      0
    \end{tikzcd}
  \end{equation}
  and
  \begin{equation}
    \label{eq:extRight}
    \begin{tikzcd}
      0
      \arrow[d]
      &
      0
      \arrow[d]
      \\
      A
      \arrow[r]
      \arrow[d]
      &
      B
      \arrow[d]
      \arrow[dd, "0", bend left]
      \\
      A'
      \arrow[r]
      \arrow[d]
      &
      P
      \arrow[d]
      \\
      A''
      \arrow[r, equal]
      \arrow[d]
      &
      A''
      \arrow[d]
      \\
      0
      &
      0
      .
    \end{tikzcd}
  \end{equation}
  Moreover,
  it is well known that the newly formed row of \eqref{eq:extDown}
  and the newly formed column of \eqref{eq:extRight}
  are both exact;
  see for example
  \mbox{\cite[\href{https://stacks.math.columbia.edu/tag/010I}{Tag 010I}]{stacks-project}}.
  So on the one hand,
  we obtain naturally induced chain maps as in
  \begin{equation}
    \label{eq:pushoutDirectSum}
    \begin{tikzcd}
      A''
      &
      &
      C
      \\
      &
      P/A
      \arrow[lu, dashed, two heads]
      \arrow[ru, dashed, two heads]
      \\
      A''
      \arrow[ru, tail, dashed]
      \arrow[uu, equal]
      &
      &
      C
      \arrow[lu, tail, dashed]
      \arrow[uu, equal]
    \end{tikzcd}
  \end{equation}
  exhibiting ${P / A}$ as a direct sum
  ${A'' \oplus C}$
  in conjunction with
  \cref{lem:pushoutDirectSum},
  and on the other hand
  we obtain the short exact sequence
  \begin{equation}
    \label{eq:sesPB'C}
    0 \rightarrow P \rightarrow B' \rightarrow C \rightarrow 0
  \end{equation}
  from \cref{lem:sesPushout3by3}.
  Now let
  ${\partial_P \colon H_{n+1}(C'') \rightarrow H_n(P)}$
  be the boundary map obtained from the zig-zag lemma
  in degree ${n+1}$
  when applied to the short exact sequence \eqref{eq:sesPB'C}.
  Similarly,
  we obtain the exact sequence
  \begin{equation}
    \label{eq:boundaryZigZagQuotient}
    \begin{tikzcd}
      H_n(P)
      \arrow[r, "H_n(\mathrm{pr})"]
      &[3ex]
      H_n(P/A)
      \arrow[r, "\partial'"]
      &
      H_{n-1}(A)
    \end{tikzcd}
  \end{equation}
  from the zig-zag lemma applied to the short exact sequence
  \begin{equation*}
    \begin{tikzcd}
      0
      \arrow[r]
      &
      A
      \arrow[r]
      &
      P
      \arrow[r, "\mathrm{pr}"]
      &
      P/A
      \arrow[r]
      &
      0
      .
    \end{tikzcd}
  \end{equation*}
  Combining these induced maps in homology with
  \eqref{eq:pushoutDirectSum}
  we obtain the commutative hexagon
  \begin{equation*}
    \begin{tikzcd}
      &
      H_{n+1}(C'')
      \arrow[dd, "{H_n(\mathrm{pr}) \circ \partial_P}" description]
      \arrow[dl, "\partial''"']
      \arrow[dr, "\partial_C"]
      \\
      H_n(A'')
      &
      &
      H_n(C)
      \\
      &
      H_n(P/A)
      \arrow[lu, two heads]
      \arrow[ru, two heads]
      \arrow[dd, "\partial'"]
      \\
      H_n(A'')
      \arrow[ru, tail]
      \arrow[uu, equal]
      \arrow[dr, "\partial_A"']
      &
      &
      H_n(C)
      \arrow[lu, tail]
      \arrow[uu, equal]
      \arrow[dl, "\partial"]
      \\
      &
      H_{n-1}(A)
      .
    \end{tikzcd}
  \end{equation*}
  In conjunction with
  \mbox{\cite[Hexagon Lemma 11.1.3]{MR2456045}}
  and the exactness of \eqref{eq:boundaryZigZagQuotient}
  we obtain the result.
\end{proof}

\section{Constructing Lifts of Points}
\label{sec:formal_lifts_points}

In this appendix we provide formal definitions
for the construction of lifts of points $u \in \mathbb{I}$
from \cref{sec:liftingPoints}.
More specifically,
we provide
formal definitions
of the simplicial complex $X_u$
as well as the maps
\begin{align*}
  r_u \colon X_u & \rightarrow [0, 1]
                   ,
  \\
  b_u \colon [0, 1] & \rightarrow \R
                      ,
  \\
  \text{and} \quad
  j_u \colon [0, 1] & \xrightarrow{\cong} A_u \subset X_u
\end{align*}
for each $u \in \mathbb{I}$.

\begin{figure}[t]
  \centering
  \includegraphics{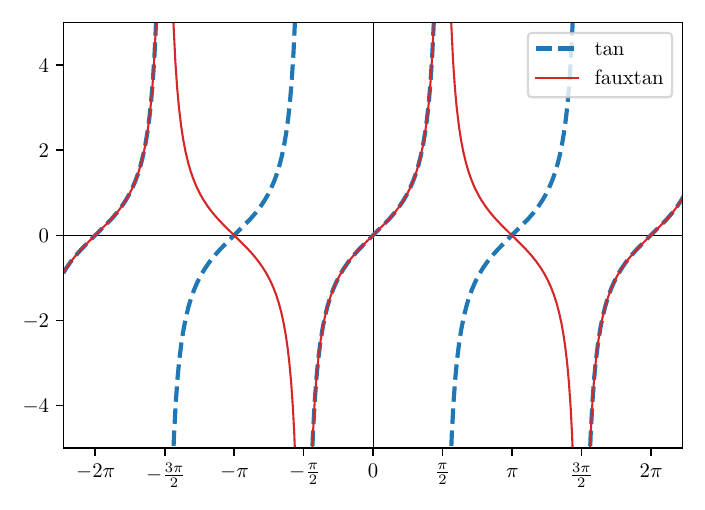}
  \caption{The modified tangent function $\op{fauxtan}\colon \R
  \rightarrow \overline\R$.}
  \label{fig:fauxtan}
\end{figure}

We start with defining the affine function
$b_u \colon [0, 1] \rightarrow \R$
for each $u = (u_1, u_2) \in \mathbb{S}$
by the formula
\begin{equation}
  \label{eq:bu}
  b_u \colon %
  [0, 1] \rightarrow \R,
  \,
  \begin{cases}
    0 \mapsto \min\, \op{fauxtan} (\{u_1, u_2\})
    \\
    1 \mapsto \max \op{fauxtan} (\{u_1, u_2\})
    ,
  \end{cases}
\end{equation}
where
\({\op{fauxtan} \colon \R
  \rightarrow \overline\R},\)
shown in \cref{fig:fauxtan},
is the unique continuous
extension
of the restricted ordinary tangent function
\(\tan \colon \left(-\frac{\pi}{2}, \frac{\pi}{2}\right) \rightarrow \R\)
that is symmetric with respect to translation by $2 \pi$ and reflection at $\frac{\pi}{2}$:
$\op{fauxtan}(x)=\op{fauxtan}(x+2\pi)=\op{fauxtan}(\pi-x)$ for all $x \in \R$, and $\op{fauxtan}(\pm\frac{\pi}{2})=\pm\infty$.
As a side note, for each $u \in R$ we have
$\op{Dgm}(b_u) = \mathbf{1}_u$ and hence $b_u = f_u$.

Next we construct the simplicial complex $X_u$
as well as the two simplicial maps
$j_u \colon [0, 1] %
\rightarrow X_u$ and
$r_u \colon X_u \rightarrow [0, 1] %
$
in terms of the corresponding vertex maps
for each $u \in \mathbb{I}$.
To this end,
let $Q := \uparrow (\frac{\pi}{2}, -\frac{\pi}{2})$ be the principal upset
of $(\frac{\pi}{2}, -\frac{\pi}{2})$.
Then $E := Q \setminus T (Q)$ is a fundamental domain
for the $\Z$-action on $\M$, and the map
\begin{equation*}
  \N_{>0} \times E \rightarrow \M \setminus Q,
  (n, v) \mapsto T^{-n} (v)
\end{equation*}
is a bijection%
.
Now the intersection
\begin{equation*}
  E
  \cap
  \left(\left(-\frac{\pi}{2}, \frac{\pi}{2}\right)^2 + \pi \Z^2\right)
\end{equation*}
has three connected components;
namely
\begin{align*}
  E_1 & :=
        E \cap
        \left(-\frac{3 \pi}{2}, -\frac{\pi}{2}\right) \times
        \left(-\frac{\pi}{2}, \frac{\pi}{2}\right)
        ,
        \\
  E_2 & :=
        E \cap
        \left(-\frac{\pi}{2}, \frac{\pi}{2}\right) \times
        \left( \frac{\pi}{2}, \frac{3 \pi}{2}\right)
        , \quad \text{and}
  \\
  E_3 & :=
        {E} \mathbin{{\cap}}
        \left(-\frac{\pi}{2}, \frac{\pi}{2}\right)^2
\end{align*}
as shown in \cref{fig:regionsFormalPointLifts}.
\begin{figure}[t]
  \centering
\begingroup%
  \makeatletter%
  \providecommand\color[2][]{%
    \errmessage{(Inkscape) Color is used for the text in Inkscape, but the package 'color.sty' is not loaded}%
    \renewcommand\color[2][]{}%
  }%
  \providecommand\transparent[1]{%
    \errmessage{(Inkscape) Transparency is used (non-zero) for the text in Inkscape, but the package 'transparent.sty' is not loaded}%
    \renewcommand\transparent[1]{}%
  }%
  \providecommand\rotatebox[2]{#2}%
  \newcommand*\fsize{\dimexpr\f@size pt\relax}%
  \newcommand*\lineheight[1]{\fontsize{\fsize}{#1\fsize}\selectfont}%
  \ifx\svgwidth\undefined%
    \setlength{\unitlength}{290.37734985bp}%
    \ifx\svgscale\undefined%
      \relax%
    \else%
      \setlength{\unitlength}{\unitlength * \real{\svgscale}}%
    \fi%
  \else%
    \setlength{\unitlength}{\svgwidth}%
  \fi%
  \global\let\svgwidth\undefined%
  \global\let\svgscale\undefined%
  \makeatother%
  \begin{picture}(1,0.49074076)%
    \lineheight{1}%
    \setlength\tabcolsep{0pt}%
    \put(0,0){\includegraphics[width=\unitlength,page=1]{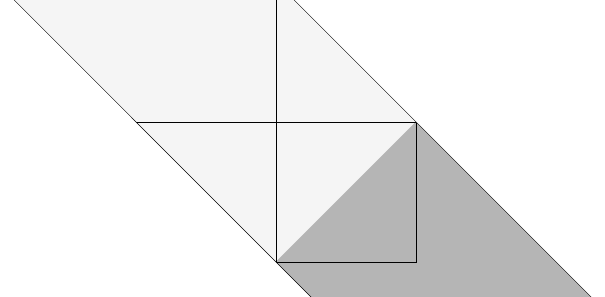}}%
    \put(0.31272743,0.17361112){\makebox(0,0)[rt]{\lineheight{1.25}\smash{\begin{tabular}[t]{r}\(l_0\)\end{tabular}}}}%
    \put(0.82045973,0.17361112){\makebox(0,0)[lt]{\lineheight{1.25}\smash{\begin{tabular}[t]{l}\(l_1\)\end{tabular}}}}%
    \put(0.39814815,0.22115011){\makebox(0,0)[t]{\lineheight{1.25}\smash{\begin{tabular}[t]{c}{\fontsize{13.5pt}{13.5pt}\selectfont \(E_1\)}\end{tabular}}}}%
    \put(0.5138889,0.33689086){\makebox(0,0)[t]{\lineheight{1.25}\smash{\begin{tabular}[t]{c}{\fontsize{13.5pt}{13.5pt}\selectfont \(E_2\)}\end{tabular}}}}%
    \put(0.57175927,0.16327974){\makebox(0,0)[t]{\lineheight{1.25}\smash{\begin{tabular}[t]{c}{\fontsize{13.5pt}{13.5pt}\selectfont \(E_3\)}\end{tabular}}}}%
  \end{picture}%
\endgroup%

  \caption{The three connected components $E_1$, $E_2$, and $E_3$.}
  \label{fig:regionsFormalPointLifts}
\end{figure}
Altogether we obtain the bijection
\begin{equation*}
  \N_{>0} \times (E_1 \cup E_2 \cup E_3) \xrightarrow{~\cong~} \mathbb{I},~
  (n, v) \mapsto T^{-n} (v)
  .
\end{equation*}

In the following we denote the standard $n$-simplex by $\Delta^n$
for $n \in \N_0$.
Its vertex set is given by the standard basis
${\{e_0, \dots, e_n\} \subset \R^{n+1}}$.
Now let ${u \in \mathbb{I}}$ and let ${n \in \N_{>0}}$ such that
${v := T^n(u) \in E}$.
We distinguish between three cases
depending on which of the three connected components of $E$ contains $v$.
\begin{description}
\item[For $v \in E_1,$] we set
  \begin{align*}
    X_u & := \Lambda^{n+1}_{n+1}, \\
    j_u & :\,
          [0, 1] \rightarrow X_u,\,
          \begin{cases}
            0 \mapsto e_0 \\
            1 \mapsto e_{n+1},
          \end{cases}
    \\
    \text{and} \quad
    r_u & :\,
          X_u \rightarrow [0, 1],~
          e_k \mapsto
          \begin{cases}
            0
            &
            0 \leq k \leq n
            \\
            1
            &
            k = n+1.
          \end{cases}
  \end{align*}
  Here $\Lambda^{n+1}_{n+1}$ is the simplicial complex we obtain
  from $\partial \Delta^{n+1}$
  after removing the facet opposite to the $(n+1)$-st vertex,
  keeping all other simplices.
\item[For $v \in E_2,$] we set
  \begin{align*}
    X_u & := \Lambda^{n+1}_{0}, \\
    j_u & :\,
          [0, 1] \rightarrow X_u,\,
          \begin{cases}
            0 \mapsto e_0 \\
            1 \mapsto e_{n+1},
          \end{cases}
    \\
    \text{and} \quad
    r_u & :\,
          X_u \rightarrow [0, 1],~
          e_k \mapsto
          \begin{cases}
            0
            &
            k = 0
            \\
            1
            &
            1 \leq k \leq n+1.
          \end{cases}
  \end{align*}
  Here $\Lambda^{n+1}_{0}$ is the simplicial complex we obtain
  from $\partial \Delta^{n+1}$
  after removing the facet opposite to the $0$-th vertex.  
\item[For $v \in E_3$] the construction is not as direct
  as in the previous two cases.
  We start by setting
  \begin{equation*}
    X_u := \left(\partial \Delta^{n+1}\right) \times [0, 1]
    \subset \R^{n+2} \times \R
  \end{equation*}
  with the following triangulation (which
  coincides with the triangulation induced
    by the corresponding simplicial set).
  The vertex set of $X_u$ is
  $\{e_0, \dots, e_{n+1}\} \times \{0, 1\}$
  and a set of pairs in $\{e_0, \dots, e_{n+1}\} \times \{0, 1\}$
  spans a simplex in $X_u$ iff the index of each basis vector
  that appears in a pair with $0$ is at most the index
  of any basis vector that appears in a pair with $1$
  and there is one basis vector not appearing in any pair.
  (The second condition ensures that we are not adding a simplex to $X_u$
  that is not even contained in the proclaimed underlying space of $X_u$.)
  With this triangulation, we define the simplicial map
  \begin{align*}
    j_u & :\,
          [0, 1] \rightarrow X_u,\,
          \begin{cases}
            0 \mapsto (e_0, 0) \\
            1 \mapsto (e_{n+1}, 1).
          \end{cases}
  \end{align*}
  To define the retraction $r_u \colon X_u \rightarrow [0, 1]$
  we have to distinguish between two cases
  depending on which of the two coordinates $v_1$ and $v_2$ of $v = (v_1, v_2)$
  is larger than or smaller than the other.
  For $v_1 \leq v_2$ we set
  \begin{equation*}
    r_u \colon X_u \rightarrow [0, 1],~
    (e_k, l) \mapsto
    \begin{cases}
      0
      &
      0 \leq k \leq n
      \\
      1
      &
      k = n+1
    \end{cases}    
  \end{equation*}
  and if ${v_1 > v_2}$,
  then we define $r_u$ to be the projection
  \[r_u := \pr_2 \colon
  \left(\partial \Delta^{n+1}\right) \times [0, 1]
  \rightarrow [0, 1],~
  (e_k, l) \mapsto l\]
  onto the second factor.
  Here the intuition is the following.
  When $v_1 = v_2$,
  then $b_u \colon [0, 1] \rightarrow \R$ is constant,
  so we can \enquote{flip} the retraction $r_u \colon X_u \rightarrow [0, 1]$
  without changing the composed map
  $f_u = b_u \circ r_u$.
\end{description}

\section{Sections of Sheaves on Intersections of Open Sets}
\label{sec:sectIntersect}

The sheaf condition essentially says that any compatible family of sections
on a cover
uniquely glues to a global section.
Given a cover by two open subsets,
we may conversely ask,
which are the sections on the intersection,
that can be obtained as the restriction of a section
on either of the two open subsets.
To this end, let $X$ be a topological space,
let $U_1 \cup U_2 = X$ be an open cover of $X$,
and let $F$ be a sheaf on $X$ with values in the category of abelian groups.
We consider the group of sections
$F(U_1 \cap U_2)$ of $F$ on the intersection of $U_1$ and $U_2$.
Some of these sections can be obtained by restriction
from sections of $F$ on $U_1$ or $U_2$.
Thus, there is an induced group homomorphism
\begin{equation*}
  \nabla_{U_1, U_2} (F) \colon F(U_1) \oplus F(U_2) \rightarrow F(U_1 \cap U_2)
  .
\end{equation*}
Now suppose we have open subsets $V_1, V_2 \subseteq X$
with $U_i \subseteq V_i$ for $i = 1,2$.
Then we have the commutative diagram
\begin{equation}
  \begin{tikzcd}
    \label{eq:inducedMapOnCokers}
    F(V_1) \oplus F(V_2)
    \arrow[r]
    \arrow[d, "\nabla_{V_1, V_2} (F)"']
    &
    F(U_1) \oplus F(U_2)
    \arrow[d, "\nabla_{U_1, U_2} (F)"]
    \\[2ex]
    F(V_1 \cap V_2)
    \arrow[r]
    \arrow[d]
    &
    F(U_1 \cap U_2)
    \arrow[d]
    \\
    \op{coker}(\nabla_{V_1, V_2} (F))
    \arrow[r, "\alpha"']
    &
    \op{coker}(\nabla_{U_1, U_2} (F))
  \end{tikzcd}
\end{equation}
as well as the induced map on cokernels
$\alpha \colon
\mathrm{coker}(\nabla_{V_1, V_2}) (F) \rightarrow
\op{coker}(\nabla_{U_1, U_2} (F))$.

\begin{lem}
  \label{lem:cokerSuperCover}
  The map
  $\alpha \colon
  \mathrm{coker}(\nabla_{V_1, V_2} (F)) \rightarrow
  \op{coker}(\nabla_{U_1, U_2} (F))$
  in the above diagram \eqref{eq:inducedMapOnCokers} is injective.
\end{lem}

\begin{proof}
  Identifying $H^0(U; F)$ with $F(U)$ for any open subset $U \subseteq X$
  and using the Mayer-Vietoris sequence for sheaf cohomology (see for example \cite[Remark 2.6.10]{MR1074006})
  we obtain the commutative diagram
  \begin{equation*}
    \begin{tikzcd}
      0 \ar[d]
      &
      0 \ar[d]
      \\
      H^0 (X; F)
      \ar[r, equal]
      \ar[d, "\begin{pmatrix} 1 \\ -1 \end{pmatrix}"']
      &
      H^0 (X; F)
      \ar[d, "\begin{pmatrix} 1 \\ -1 \end{pmatrix}"]
      \\[5ex]
      H^0 (V_1; F) \oplus H^0(V_2; F)
      \ar[r]
      \ar[d, "\nabla_{V_1, V_2} (F)"']
      &
      H^0 (U_1; F) \oplus H^0 (U_2; F)
      \ar[d, "\nabla_{U_1, U_2} (F)"]
      \\[2ex]
      H^0 (V_1 \cap V_2; F)
      \ar[r]
      \ar[d]
      &
      H^0 (U_1 \cap U_2; F)
      \ar[d]
      \\
      H^1 (X; F)
      \ar[r, equal]
      &
      H^1 (X; F)    
    \end{tikzcd}
  \end{equation*}
  with exact columns.
  By the exactness of these two columns
  at ${H^0 (V_1 \cap V_2; F)}$ and ${H^0 (U_1 \cap U_2; F)}$ respectively
  we obtain the commutative diagram
  \begin{equation*}
    \begin{tikzcd}
      \op{coker}(\nabla_{V_1, V_2} (F))
      \arrow[r, "\alpha"]
      \ar[d, tail]
      &
      \op{coker}(\nabla_{U_1, U_2} (F))
      \ar[d, tail]
      \\
      H^1 (X; F)
      \ar[r, equal]
      &
      H^1 (X; F)      
    \end{tikzcd}
  \end{equation*}
  with the two vertical maps monomorphisms of abelian groups as indicated.
  Thus, the map
  $\alpha \colon
  \mathrm{coker}(\nabla_{V_1, V_2} (F)) \rightarrow
  \op{coker}(\nabla_{U_1, U_2} (F))$
  is a monomorphism as well.
\end{proof}

\end{document}